\documentclass[11pt]{article}
\usepackage[utf8]{inputenc}
\usepackage[margin=1in]{geometry}

\usepackage{amsmath}
\usepackage{hyperref}
\usepackage{amsfonts}
\usepackage{amssymb}
\usepackage{graphicx}
\usepackage{multicol,float}
\usepackage{color}
\usepackage{multirow}
\usepackage{amsthm}
\usepackage{dsfont}
\usepackage{setspace}
\usepackage{bm}
\usepackage{caption}
\usepackage{subcaption}
\usepackage{comment}
\usepackage{mathtools}
\mathtoolsset{showonlyrefs=true}
\usepackage{epstopdf}
\usepackage{xcolor}
\DeclareMathAlphabet{\mathpzc}{OT1}{pzc}{m}{it}
\usepackage[numbers]{natbib}
\usepackage{enumitem}
\usepackage{booktabs} 
\captionsetup{font=small}
\newtheorem{theorem}{Theorem}[section]

\newtheorem{assumption}{Assumption}[section]

\newtheorem{corollary}{Corollary}[section]

\newtheorem{definition}{Definition}[section]

\newtheorem{proposition}[theorem]{Proposition}
\newtheorem{remark}{Remark}[section]

\newcommand \drm {\mathrm{d}}
\newcommand \E {\mathbb{E}}

\newcommand \EE {\mathbb{E}}

\newcommand \eps {\epsilon}
\newcommand \gam {\gamma}

\newcommand \la {\lambda}
\newcommand \ba {\tilde{a}}
\newcommand \hp {\widehat{\pi}}
\newcommand \noi {\noindent}
\newcommand \varp {\varphi}
\newcommand \sig {\sigma}
\newcommand \e {\mathrm{e}}

\newenvironment{bluepar}
{\begingroup\color{blue}}
{\endgroup}


\usepackage{mathtools}                              
\mathtoolsset{showonlyrefs=true}    
\numberwithin{equation}{section}
\renewcommand{\theequation}{\arabic{section}.\arabic{equation}}

\newcommand{\blue}[1]{\textcolor{blue}{#1}}

\newcommand{\bpi}{\boldsymbol{\pi}}
\providecommand{\smn}{{-}} 

\allowdisplaybreaks
\begin{document}
	\title{Equilibrium Strategies for the N-agent Mean-Variance Investment Problem over a Random Horizon}
	
	\author{Xiaoqing Liang%
		\thanks{School of Sciences, Hebei University of Technology, Tianjin, 300401, P. R. China. Email: liangxiaoqing115 \break @hotmail.com.}
		\and Jie Xiong%
		\thanks{Department of Mathematics and SUSTech International center for Mathematics, Southern University
			of Science and Technology, Shenzhen, 518055, P. R. China. Email: xiongj@sustech.edu.cn.}
		\and Ying Yang%
		\thanks{Department of Mathematics, Southern University
			of Science and Technology, Shenzhen, 518055, P. R. China. Email:12331007@mail.sustech.edu.cn}
	}
	\date{\today}
	\maketitle
	\begin{abstract}
		We study equilibrium feedback strategies for a family of dynamic mean-variance problems with competition among a large group of agents. We assume that the time horizon is random and each agent's risk aversion depends dynamically on the current wealth. We consider both the finite population game and the corresponding mean-field one. Each agent can invest in a risk-free asset and a specific individual stock, which is correlated with other stocks by a common noise. By applying stochastic control theory, we derive the extended Hamilton-Jacobi-Bellman (HJB) system of equations for both $n$-agent and mean-field games. Under an exponentially distributed random horizon, in each case, we explicitly obtain the equilibrium feedback strategies and the value function. Our results show that the agent's equilibrium feedback strategy depends not only on his/her current wealth but also on the wealth of other competitors. Moreover, when the risk aversion is state-independent and the risk-free interest rate is zero, the equilibrium strategies degenerate to constants, which is identical to the unique equilibrium obtained in \citet{lacker2019mean} with exponential risk preferences; when the competition parameter goes to zero and the risk aversion equals some specific value, the equilibrium strategies coincide with the ones derived in \citet{landriault2018equilibrium}.

		\medskip
		
		\noi \textit{MSC2020 Codes}: Primary 93E20, 91G10  \, Secondary  91G80, 60H30.

		\medskip
		
		\noi \textit{Keywords}: mean-variance criterion; equilibrium feedback strategy; random horizon; state-dependent risk aversion; investment

	\end{abstract}

	\section{Introduction}
Stochastic optimal control problems aim to analyze a set of control variables that enable the controller to achieve a desired target for a controlled state process. In traditional stochastic control problems, Pontryagin's maximum principle and Bellman's dynamic programming are two commonly used methods for finding optimal solutions (e.g., see \citet{yong2012stochastic}). A crucial requirement for applying Bellman's dynamic programming is that the problem must be time-consistent, meaning that an optimal strategy chosen at the current time will remain optimal in the future. However, this property is violated in time-inconsistent control problems, particularly in the mean-variance (MV) problems, where the presence of the variance term leads to the failure of the iterated expectation property.

In the existing literature, two main approaches are typically used to address the issue of time inconsistency. The first is the pre-commitment approach, which optimizes the objective function at the initial time and requires the controller to commit to this strategy throughout the control period. In MV criterion, the pre-commitment strategy involves setting the expectation of terminal wealth as a static constant, allowing the dynamic programming principle to be applied. A great deal of research focus on solving MV optimization problems from the pre-commitment perspective. See, for instance, \citet{li2000optimal} and  \citet{zhou2000continuous} embedded the original MV problem into a stochastic linear-quadratic control problem and derived the optimal investment strategies from tractable auxiliary problems in both discrete-time and continuous-time frameworks. \citet{bielecki2005continuous} studied a continuous-time MV portfolio selection problem with random coefficients and bankruptcy constraints. \citet{xiong2007mean} studied a continuous-time portfolio selection problem under the MV framework in an incomplete information market, where only the past prices of the stocks and the bond are available to the investors.

However, pre-commitment strategies are not optimal at future time points, making them time-inconsistent. An alternative approach to addressing the MV criterion is to frame the problem within a game-theoretic context, aiming to determine the subgame-perfect Nash equilibrium. In this framework, the problem is modeled as a non-cooperative game, where each time point is treated as a separate player.
\citet{strotz1973myopia} was the first to formally apply a game-theoretic approach to deal with a time-inconsistent deterministic Ramsay problem. \citet{bjork2017time} considered a general class of time-inconsistent objective functions in a continuous-time Markovian setting, deriving an extended Hamilton-Jacobi-Bellman (HJB) equation and providing a verification theorem. \citet{bjork2014mean} developed a time-consistent strategy for a continuous-time MV portfolio problem with state-dependent risk aversion. \citet{landriault2018equilibrium} applied stochastic control theory to derive the extended HJB system in both discrete and continuous-time frameworks, investigating equilibrium feedback strategies with a state-dependent risk aversion and random time horizon. More recently, this methodology has been extended to many different frameworks, for example, \citet{pun2018time} and \citet{yan2020robust} on portfolio selection, \citet{zeng2013time} and \citet{chen2020robust} on reinsurance and investment, \citet{liang2014optimal} and \citet{zhang2025time} on pension fund optimization.

Most of the research mentioned above has focused on a single-agent or two-agent scenarios. However, in practice, agents often compete with many others in large-scale markets, where relative performance becomes a crucial factor in their decision-making. In this context,  \citet{lacker2019mean} studied optimal portfolio management with relative concerns in both finite and infinite populations, deriving time-consistent equilibrium strategies for constant absolute risk aversion (CARA) and constant relative risk aversion (CRRA). \citet{lacker2020many} further examined portfolio optimization for competitive agents with CRRA utilities over a finite time horizon, where each agent's utility depends on both their absolute wealth and consumption and their relative wealth and consumption compared to the average among other agents. \citet{guan2022time} derived time-consistent investment and reinsurance strategies for MV problems in multi-agent settings and mean-field games. \citet{dos2022forward} investigated the multi-player games of investment-consumption under the forward performance framework. \citet{liang2023predictable} considered the relative performance concerns in a framework of discrete-time predictable forward performance processes for both finite player and mean-field games. \citet{bo2024mean} studied multiple agents in mean-field interactions, analyzing their terminal wealth under exponential utility with relative performance. They established both finite $n$-agent game equilibria and mean-field game equilibria, exploring their convergence.

In this paper, we explore a class of dynamic mean-variance problems involving competition among both finite and infinite numbers of agents. In order to have a more realistic model, we assume a random time horizon and each agent's risk aversion dynamically depends on their current wealth. Each agent is allowed to invest in a risk-free asset and a specific individual stock, which is correlated with other stocks through a common noise factor. We solve this problem using a two-step approach. In the first step, we choose a representative agent, and then determine his or her ``best response'' given the other agents' investment strategies. Using stochastic control theory, we prove a general verification theorem for this multi-agent, state-dependent optimization problem, which includes an extended HJB system of equations for both $n$-agent and mean-field games. When the time horizon follows an exponential distribution, the problem becomes time-independent. Thus, solving the extended HJB equation reduces to solving a series of time-independent ordinary differential equations (ODEs).  In both $n$-agent and mean-field games, we explicitly derive the equilibrium feedback strategies and value functions by solving these ODEs. In the second step, we search for a fixed point of the whole system such that the candidate investment strategy is indeed a Nash equilibrium. We provide explicit expressions for the Nash equilibrium for two special scenarios: The scenario where the agents are homogeneous and the scenario where there are two heterogeneous agents. Our results show that the agent's equilibrium feedback strategy depends on both the agent's current wealth and the wealth of other competitors. Furthermore, we establish a connection between the finite-agents game and the mean-field game when the agents are homogeneous.

Our study makes the following contributions. First, we extend the work of \citet{lacker2019mean} to a multi-agent MV frameworks. In \citet{lacker2019mean}, the equilibrium strategy consists of two parts: one is the traditional Merton portfolio, which does not consider relative performance concerns, and the other depends on competition parameters. However, the equilibrium strategy in that model is independent of the agent's wealth. In contrast, our model incorporates state-dependent risk aversion and a random horizon, resulting in a more realistic equilibrium strategy that depends on the agent's current wealth, the competitors' current wealth, and both competition-dependent and independent terms. Furthermore, we find that when risk aversion is state-independent and the risk-free interest rate is zero, our equilibrium strategies reduce to constant values, which match the unique equilibria derived in \citet{lacker2019mean} with exponential risk preferences.

Second, \citet{guan2022time} also studied a time-consistent MV optimization problem in multi-agent and mean-field game frameworks. However, they assumed state-independent risk aversion, meaning the equilibrium strategies in their model are independent of agents' wealth. By incorporating state-dependent risk aversion, our model presents a significantly more complex problem. However, using the time-consistent theory developed in \citet{bjork2017time}, we derive an explicit form of the equilibrium strategy for the case where the random time horizon is exponentially distributed.

Finally, we extend the work of \citet{landriault2018equilibrium} to multi-agent game and the mean-field game frameworks. We show that when the competition parameter approaches zero and risk aversion reaches specific values, the equilibrium strategies coincide with those in \citet{landriault2018equilibrium}. Additionally, we examine the limiting behavior of the equilibrium strategies as the hazard rate of the random horizon approaches infinity. Furthermore, we show that the time-consistent equilibrium of the \( n \)-agent game converges to the equilibrium of the mean-field game (MFG) as $n \to \infty$.

The remainder of this paper is organized as follows. In Section \ref{sec:ngame}, we study an $n$-agent mean-variance investment problem with a random horizon and state-dependent risk aversion. Subsection \ref{subsec:model} formulates the $n$-agent game. In Subsection \ref{subsec:verif}, we solve this problem using a two-step approach. In the first step, for a representative agent, we determine his or her ``best response'' with respect to the other agents' investment strategies which are given in advance. Using stochastic control theory, we prove a verification theorem, which includes an extended HJB system of equations. Then, we consider the case where the random horizon follows an exponential distribution. Under this assumption, the extended HJB equation simplifies to a series of time-independent ordinary differential equations, and we derive the equilibrium feedback strategies by solving these ODEs. In the second step, we search for a fixed point of the whole system such that the candidate investment strategy is indeed the Nash equilibrium. We provide explicit expression for the Nash equilibrium for two special scenarios. In Subsection \ref{sec:homogenous case}, we assume that the agents are homogeneous, and in Subsection \ref{sec:heterogeneous case}, we study the scenario when there are two heterogeneous agents. In Section \ref{sec:MF}, we study the limit as $n \to \infty$ and analyze the equilibrium strategies in the mean-field game. The problem is formulated in Subsection \ref{subsec:MF-model}, and, similar to Subsection \ref{subsec:verif}, we prove a verification theorem and derive the equilibrium strategy by solving the corresponding extended HJB system for the representative agent in Subsection \ref{subsec:MBR}. In Subsection \ref{subsec:MFH}, under a homogeneous mean-field model framework, we find the explicit expression of the equilibrium investment strategy for the whole system, and establish the connection between the $n$-agent game and the mean-field game. Finally, in Section \ref{sec:NA}, we provide several numerical examples to further explore the effects and sensitivity of the parameters on the equilibrium strategies.

	\section{The $n$-agent game}\label{sec:ngame}
	
	In this section, we formulate the continuous-time $n$-agent investment problem using a mean-variance criterion over a random horizon. In Subsection \ref{subsec:model}, we introduce the financial market, the objective, the definition of the equilibrium feedback strategy, and the corresponding equilibrium value function. In Subsection \ref{subsec:verif}, we first prove a general verification theorem, and then we use it to find the solutions of the $n$-agent game when the random horizon is exponentially distributed, that is, the hazard rate of the random horizon is a positive constant.  In Subsection
	\ref{sec:homogenous case}, we consider the equilibrium strategy for the homogeneous agents system. Finally, in Subsection
	\ref{sec:heterogeneous case}, we derive the equilibrium strategy for the heterogeneous 2-agent system.
	
	\subsection{Model formulation}\label{subsec:model}
	
	Let \((\Omega, \mathcal{F}, \mathbb{F}=\{\mathcal{F}_t\}_{t \geq 0}, \mathbb{P})\) be a filtered, complete probability space satisfying the usual conditions. Suppose that \(\bm{W}=(W_1(t),\ldots,W_n(t))_{t \geq 0}\) is an \(n\)-dimensional standard Brownian motion and \(B=(B(t))_{t \geq 0}\) is a one-dimensional standard Brownian motion, both of them are defined on this probability space and are mutually independent.
	
	There are \(n\) agents in the financial market, indexed by \(i\in\{1,\ldots,n\}\), along with \(n\) risky assets and one risk-free asset. Each agent can invest in the risk-free asset and a stock. More precisely,
	the $i$-th agent can invest in a risk-free asset, whose price is denoted by $S_0(t)$ satisfying the following equation
	\begin{equation}
		\frac{\mathrm{d}S_0(t) }{S_0(t)}=r\mathrm{d}t,  \qquad \qquad S_0(0) = s_0,
	\end{equation}
	in which $r \ge 0$ is the constant interest rate. She can also invest in a stock, we call it the $i$-th stock, whose price process $S_i= \{S_i(t)\}_{t \ge 0}$ satisfies the following stochastic differential equation (SDE):
	\begin{align}
		\frac{\drm S_i (t)}{S_i (t)} &= b_i \, \drm t + \xi_i \, \drm W_i (t) + \sigma_i \, \drm B(t), \quad\quad S_i(0) = s_i,
	\end{align}
	in which \(b_i (>r)\) is the yield of stock, \(\xi_i (\ge 0)\) is its idiosyncratic volatility, and \(\sigma_i (\ge 0)\) is the common volatility with $\xi_i + \sigma_i>0$.
	In our paper, we use the Brownian motion \(\bm{W}\) to represent the idiosyncratic risk factors for the stocks, and \(B\) to represent the common risk factor that affects the entire market.
	
	For $i\in\{1,\ldots,n\}$, let $\pi_i(t)$ represent the amount of money that the $i$-th agent invests in the $i$-th stock. Then, the $i$-th agent's wealth process $X^{\boldsymbol{\pi}_i}_i=\{X^{{\boldsymbol{\pi}}_i}_i(t)\}_{t\geq 0}$ under strategy ${\boldsymbol{\pi}}_i = \{\pi_i(t)\}_{t \ge 0}$ follows the dynamics
\begin{align}
\drm X^{\boldsymbol{\pi}_i}_i (t)
&= [r X^{\boldsymbol{\pi}_i}_i(t) + (b_i - r) \pi_i(t)] \, \drm t + \xi_i \pi_i(t) \, \drm W_i(t) + \sigma_i \pi_i(t) \, \drm B(t).\label{eq:X}
	\end{align}
We use \(\bm{X}^{\boldsymbol{\pi}}=\big(X^{\boldsymbol{\pi}_1}_1(t),\ldots, X^{\boldsymbol{\pi}_n}_n(t)\big)_{t\geq 0}\) to denote the \(n\) agents' wealth process under the strategy $\boldsymbol{\pi} = (\boldsymbol{\pi}_1,\ldots, \boldsymbol{\pi}_n)$.
\blue{Let $\tau$ be a random variable defined on the probability space $(\Omega, \mathcal{F}, \mathbb{F}=\{\mathcal{F}_t\}_{t \geq 0}, \mathbb{P})$, which is independent of the Brownian motions $(\bm{W}, B)$. Let $\lambda(t)$ denote the hazard rate of $\tau$, which is a deterministic time-dependent function, and then the survival probability is given by
	\[
	\mathbb{P}(\tau > t) = \exp\left(-\int_0^t \lambda(s) \drm s\right).
	\]
}
To address the investment problem of each agent, we introduce the definition of an \textit{admissible strategy} for the $i$-th agent.
\blue{	
\begin{definition}[Admissible strategy]\label{def:admissible}
For any \(i \in \{1,\ldots,n\}\), a strategy \( \boldsymbol{\pi}_i = \big(\pi_i(t)\big)_{t\geq 0} \) is called admissible if it satisfies the following conditions:
\begin{itemize}
\item[$(1)$] $\boldsymbol{\pi}_i$ is an $\mathbb{F}$-progressively measurable real-valued process, and for any $x_i \in \mathbb{R}$,
 there exist constants $\mathfrak{c}_i, \mathfrak{l}_i$, and $\mathfrak{s}_i$, such that $\E_{0, {x_i}} \big[\left({\bpi_i}(t)\right)^2\big]\le \mathfrak{c}_i \e^{\mathfrak{s}_i t} + \mathfrak{l}_i$, for any $t\ge0$.
\item[$(2)$] The SDE \eqref{eq:X} has a unique strong solution and $\E_{0, {x_i}} \big[\left({X}^{\bpi_i}_i(\tau)\right)^2\big]<\infty$.
\item[$(3)$] There exist constants $\mathfrak{a}_i, \mathfrak{b}_i$, and $\mathfrak{d}_i$ such that $\E_{0, {x_i}} \big[\left({X}^{\bpi_i}_i(t)\right)^2\big]\le \mathfrak{a}_i \e^{\mathfrak{d}_i t} + \mathfrak{b}_i$, for any $t\ge0$.
\end{itemize}
We use \( \boldsymbol{\Pi}_i \) to denote the set of the \(i\)-th agent's all admissible strategies.
\end{definition}
}

We consider a mean-variance optimization portfolio problem over a random horizon $(0, \tau]$. For each agent, they are not concerned about their own wealth but also taking into account other's relative wealth. Define
	$$\overline{X}^{\bpi}(t):=\frac{1}{n}\sum_{i = 1}^{n}X^{\bpi_i}_i(t)$$
as the $n$ agents' average wealth at time $t$ under the strategy $\bpi$.

\blue{
In single-period mean variance analysis, if the mean variance utility
function (with constant risk aversion $\gam$) is applied not to the wealth itself, but to
the return rate, that is, the objective function is given by
\begin{align}
\E\left[\dfrac{X^{\bpi}_1}{x}\right] - \frac{\gam}{2}\text{Var}\left[\dfrac{X^{\bpi}_1}{x}\right],
\end{align}
and we can rewrite it as
\begin{align}
\dfrac{1}{x^2} \Big\{ 2 x \cdot \E[X^{\bpi}_1] - {\gam} \text{Var}[X^{\bpi}_1] \Big\}.
\end{align}
\citet{bjork2014mean} studied an equivalent mean-variance portfolio optimization problem when the risk aversion depends dynamically on current wealth, that is, $\gam(x) = \gam/x$ when $x>0$. \citet{landriault2018equilibrium} also considered a similar continuous-time mean-variance investment problem with a state-dependent risk aversion over a random horizon.}

Given this, and to ensure a more realistic model, we assume that the \(i\)-th agent seeks to maximize $J_i(t,\bm{x};\boldsymbol{\pi})$, in which $J_i$ has the following form:
\begin{align}
J_i(t,\bm{x};\boldsymbol{\pi})=({\mu_{i1}x_i+\mu_{i2}})\mathbb{E}_{t,\bm{x},t +}[X^{\bpi_i}_i(\tau)-\phi_i\overline{X}^{\bpi}(\tau)]-{\gamma_i}\text{Var}_{t,\bm{x},t +}[X^{\bpi_i}_i(\tau)-\phi_i\overline{X}^{\bpi}(\tau)]. \label{eq:n-cost}
\end{align}
Here,\footnote{\blue{In this paper, we work on the entire real line \(\mathbb{R}\) for \(x_i\), rather than restricting it to \(x_i\ge 0\) or \(\mu_{i1}x_i+\mu_{i2}>0\), which would introduce state constraints and make the resulting optimization problem extremely challenging to solve using the extended Hamilton-Jacobi-Bellman (EHJB) equation approach.}} \textcolor{blue}{\(\bm{x}=(x_1,\ldots,x_n)\in\mathbb{R}^n\)} is the agents' initial wealth at time $t$, and \(\mathbb{E}_{t,\bm{x},t +}[\cdot]:=\mathbb{E}[\cdot|\bm{X}^{\bpi}(t)=\bm{x},\tau > t]\). Moreover, we use \(X^{\bpi_i}_i(t)-\phi_i\overline{X}^{\bpi}(t)\) to denote the \(i\)-th agent's relative wealth with \(\phi_i\in[0,1]\) representing the \(i\)-th agent's risk preference regarding their own wealth relative to the overall average wealth. If $\phi_i$ is large, the agent probably focuses more on the relative wealth than their own wealth. $\gam_i>0$ is the \(i\)-th agent's risk aversion towards variance, and \(\mu_{i1}\) and \(\mu_{i2}\) are positive constants.\footnote{\blue{The constants \(\mu_{i1}\) and \(\mu_{i2}\) cannot be zero simultaneously. For example, if \(\mu_{i1}=2\) and \(\mu_{i2}=0\), the risk-aversion function becomes \(\gamma_i/(2x)\). In this case, the effective risk-aversion diverges to infinity as wealth approaches zero. Economically, this implies that the agent abruptly ceases all market participation and liquidates all risky positions as wealth depletes.}}

Due to the presence of $\overline{X}^{\bpi}(\tau)$ in the objective, the optimal decision of the $i$-th agent combines with the other competitors' optimal control strategies. We introduce an auxiliary process
$Y^{\bpi_{\smn i}}_{\smn i}:=\{Y^{\bpi_{\smn i}}_{\smn i}(t)\}_{t \ge 0}$ as
	\[Y^{\bpi_{\smn i}}_{\smn i}(t):=\frac{1}{n}\sum_{\substack{j = 1\\j\neq i}}^{n}X^{\bpi_j}_j(t),\]
in which
${\bpi_{{\smn i}}}:= (\boldsymbol{\pi}_1,\ldots, \boldsymbol{\pi}_{i-1}, \boldsymbol{\pi}_{i+1},\ldots, \boldsymbol{\pi}_n)$, and then $Y^{\bpi_{\smn i}}_{\smn i}$ satisfies the following SDE:
	\begin{align}\label{eq:Y}
		\drm Y^{\bpi_{\smn i}}_{\smn i}(t)
		=\left[r Y^{\bpi_{\smn i}}_{\smn i}(t)+\widehat{\underline{br}\pi}(t)\right]\drm t+\frac{1}{n}\sum_{\substack{j = 1\\j\neq i}}^{n}\xi_j\pi_j(t)\drm W_j(t)+\widehat{\sigma\pi}(t)\drm B(t),
	\end{align}
	with \(\widehat{\underline{br}\pi}(t)=\frac{1}{n}\sum_{j=1, j \neq i}^{n}(b_j - r)\pi_j(t)\) and \(\widehat{\sigma\pi}(t)=\frac{1}{n}\sum_{j=1, j \neq i}^{n}\sigma_j\pi_j(t)\). By using the auxiliary process $Y^{\bpi_{\smn i}}_{\smn i}$, we can rewrite \eqref{eq:n-cost} as follows
	\begin{align}
		J_i(t, x_i, y_{\smn i}; \bm{\pi}) &= (\mu_{i1} x_i + \mu_{i2}) \mathbb{E}_{t, x_i, y_{\smn i}, t+}\left[\left(1 - \frac{\phi_i}{n}\right) X^{\bpi_i}_i(\tau) - \phi_i Y^{\bpi_{\smn i}}_{\smn i}(\tau)\right] \\
		&\quad - \gamma_i \text{Var}_{t, x_i, y_{\smn i}, t+}\left[\left(1 - \frac{\phi_i}{n}\right) X^{\bpi_i}_i(\tau) - \phi_i Y^{\bpi_{\smn i}}_{\smn i}(\tau)\right], \label{eq:fun_Ji}
	\end{align}
	with \(y_{\smn i} := \frac{1}{n} \sum_{j=1, j \neq i}^{n} x_j\).

	\subsection{Best responses} \label{subsec:verif}

	Inspired by the usual strategy of solving Nash equilibria games, we first consider the best response of an agent to the actions of all other competitors, whose strategies are arbitrary and admissible but fixed and given in advance.
We show the definition of the best-response equilibrium feedback strategy for an agent in terms of the given strategies of all other agents.
	
	\begin{definition}[Time-consistent equilibrium feedback strategy]\label{definitionE}
Fix $i \in \{1, \ldots, n\}$. Assume that each agent $j \neq i$ follows $\widehat{\bm{\pi}}_j \in \bm{\Pi}_j$. For a given feedback trading strategy \(\widehat{\bm{\pi}}_i = \{\widehat{\pi}_i(t, {X}^{\widehat{\bpi}_i}_i(t), Y^{\widehat{\bpi}_{\smn i}}_{\smn i}(t))\}_{t \ge 0} \in \boldsymbol{\Pi}_i\), any fixed \((t, x_i, y_{\smn i}) \in [0,\infty) \times \mathbb{R} \times \mathbb{R}\), a fixed number \(\epsilon > 0\), and a real number $\pi_i$; then, define a new feedback strategy \({\bm{\pi}}^\epsilon_i\) by
		\[
		\pi_i^\epsilon(s, x_i, y_{\smn i}) =
		\begin{cases}
			\pi_i, & \text{for } {t \leq s < (t+\epsilon) \wedge \tau,} \\
			\widehat{\pi}_i(s, x_i, y_{\smn i}), & \text{for } (t+\epsilon) \wedge \tau \leq s \leq \tau.
		\end{cases}
		\]		
		If for all \((t, x_i, y_{\smn i}) \in [0,\infty) \times \mathbb{R} \times \mathbb{R}\),
		\begin{align}
		\liminf_{\epsilon \to 0^+} \frac{J_i(t, x_i, y_{\smn i}; \widehat{\bm{\pi}}_1, \ldots, \widehat{\bm{\pi}}_i, \ldots, \widehat{\bm{\pi}}_n) - J_i(t, x_i, y_{\smn i}; \widehat{\bm{\pi}}_1, \ldots, \bm{\pi}^\epsilon_i, \ldots, \widehat{\bm{\pi}}_n)}{\epsilon} \geq 0, \label{eq:limit_equi}
		\end{align}
		then $\widehat{\bpi}=(\widehat{\bm{\pi}}_1, \ldots, \widehat{\bm{\pi}}_i, \ldots, \widehat{\bm{\pi}}_n)$ is a time-consistent equilibrium feedback strategy,
		and  the corresponding equilibrium value function for the \(i\)-th agent is given by
		\[
		V_i(t, x_i, y_{\smn i}) = J_i(t, x_i, y_{\smn i}; \widehat{\bm{\pi}}), ~~~~\text{for}~~i=1,\ldots,n.
		\]
	\end{definition}
	We first prove a verification theorem for our problem by employing a decomposition approach similar to that in \citet{kryger2010some}. For the sake of brevity, the proof is deferred to Appendix \ref{app:A}. After that, we use this theorem to derive the time-consistent equilibrium feedback strategies for the \(n\)-agent game when the hazard rate of the random horizon is a positive constant. Given a function \(g(t,x,y) \in C^{1,2,2}([0,\infty) \times \mathbb{R} \times \mathbb{R})\), and for any real numbers $\pi = (\pi_1, \pi_2, \ldots, \pi_n)$, define the infinitesimal generator \(\mathcal{L}_i^{\pi}\), \(i = 1,\ldots,n\), by
	\begin{align}
		\mathcal{L}_i^{{\pi}} g(t,x,y) = & \, g_t + g_x[rx + (b_i - r)\pi_i] + g_y[ry + \widehat{\underline{br}\pi}] \\
		& + \frac{1}{2} g_{xx} \left[\xi_i^2 \pi_i^2 + \sigma_i^2 \pi_i^2\right] + \frac{1}{2} g_{yy} \left[\widehat{\xi\pi}^2 + \widehat{\sigma\pi}^2\right] \\
		& + g_{xy} \sigma_i \pi_i \widehat{\sigma\pi},
	\end{align}
\textcolor{blue}{ 	in which  \(\widehat{\underline{br}\pi}=\frac{1}{n}\sum_{j=1, j \neq i}^{n}(b_j - r)\pi_j\),  \(\widehat{\sigma\pi} = \frac{1}{n}\sum_{j=1, j \neq i}^{n}\sigma_j\pi_j\), \(\widehat{\xi\pi}^2 = \sum_{j=1, j \neq i}^{n} \left(\frac{1}{n}\xi_j \pi_j\right)^2\), and \(\widehat{\sigma\pi}^2 = \sum_{j=1, j \neq i}^{n} \left(\frac{1}{n}\sigma_j \pi_j\right)^2\). }
	Define a function \(f \in C^{1,2,2,2,2}([0,\infty) \times \mathbb{R}^4)\) by
	\begin{align}\label{eq:f}
		f(t, x, y, u, v) = (\mu_{i1} x + \mu_{i2}) u - \gamma_i (v - u^2),
	\end{align}
 where $u$ and $v$ are two functions depending on $(t,x,y)$.	
	\begin{theorem}[Verification theorem]\label{th:verification}
Let $i \in \{1, \ldots, n\}$ and assume each agent $j \neq i$ follows $\widehat{\bm{\pi}}_j \in \bm{\Pi}_j$. Suppose that there exist three real-valued functions $V_i(t,x_i,y_{\smn i})$, $G_i(t,x_i,y_{\smn i})$, $H_i(t,x_i,y_{\smn i})\in C^{1,2,2}([0,\infty)\times\mathbb{R}\times\mathbb{R})$ satisfying the following conditions:
		\begin{itemize}
			\item[$(1)$] For all \( (t,x_i,y_{\smn i})\in [0,\infty)\times \mathbb{R}\times \mathbb{R}\),
			\begin{align}
				&\lambda(t)\bigg\{V_i(t,x_i,y_{\smn i})-(\mu_{i1}x_i+\mu_{i2})\Big(\big(1-\frac{\phi_i}{n}\big)x_i-\phi_i y_{\smn i}\Big)+{\gamma_i}\Big[G_i(t,x_i,y_{\smn i})-\Big((1-\frac{\phi_i}{n})x_i-\phi_iy_{\smn i}\Big)\Big]^2\bigg\}\\	&=\sup_{{\pi}}\{\mathcal{L}_i^{\widehat{\pi}(i)}V_i(t,x_i,y_{\smn i})-\mathcal{L}_i^{\widehat{\pi}(i)}f(t,x_i,y_{\smn i},G_i,H_i)+f_G(t,x_i,y_{\smn i},G_i,H_i)\mathcal{L}_i^{\widehat{\pi}(i)} G_i(t,x_i,y_{\smn i})\\
				& \quad +f_H(t,x_i,y_{\smn i},G_i,H_i)\mathcal{L}_i^{\widehat{\pi}(i)}H_i(t,x_i,y_{\smn i})\}, \label{eq:EHJB}
			\end{align}
			in which $\widehat{\pi}(i) = (\widehat{\pi}_1, \ldots, \pi_i, \ldots, \widehat{\pi}_n)$. Let $\widehat{\pi}_i$ denote the function of $(t, x_i, y_{\smn i})$ that attains the supremum in \eqref{eq:EHJB} and assume that $\widehat{\bm{\pi}}_i$ is admissible, that is, $\widehat{\bm{\pi}}_i \in \bm{\Pi}_i$.
			\item[$(2)$] For all \( (t,x_i,y_{\smn i})\in [0,\infty)\times \mathbb{R}\times \mathbb{R}\),
			\begin{align}\label{eq:LG}
				\lambda(t)\Big[ G_i(t,x_i,y_{\smn i})-\Big(\big(1-\frac{\phi_i}{n}\big)x_i-\phi_i y_{\smn i}\Big)\Big]=\mathcal{L}^{\widehat{{\pi}}}_iG_i(t,x_i,y_{\smn i}).
			\end{align}
			\item[$(3)$] For all \( (t,x_i,y_{\smn i})\in [0,\infty)\times \mathbb{R}\times \mathbb{R}\),
			\begin{align}\label{eq:LH}
				\lambda(t)\Big[H_i(t,x_i,y_{\smn i})-\Big(\big(1-\frac{\phi_i}{n}\big)x_i-\phi_i y_{\smn i}\Big)^2\Big]=\mathcal{L}_i^{\widehat{{\pi}}}H_i(t,x_i,y_{\smn i}).
			\end{align}
			\item[$(4)$] A transversality condition holds. Specifically, for all \( (t,x_i,y_{\smn i})\in [0,\infty)\times \mathbb{R}\times \mathbb{R}\),
			\begin{align}\label{eq:T}
				\lim_{s\to \infty}\mathbb{E}_{t,x_i,y_{\smn i}}\Big[\e^{-\int_t^s \lambda(v)\drm v} j\big(s,X^{\widehat{\bm{\pi}}_i}_i(s),
Y^{\widehat{\bpi}_{\smn i}}_{\smn i}(s)\big)\Big]=0,
			\end{align}
			for \(j=G_i\), \(H_i\), \(V_i\), and \(f\).
		\end{itemize}
		Then,
		\begin{align}
			V_i(t,x_i,y_{\smn i})&=J_i(t,x_i,y_{\smn i};\widehat{\bm{\pi}}_1,\ldots,\widehat{\bm{\pi}}_i,\ldots,\widehat{\bm{\pi}}_n),\\
			G_i(t,x_i,y_{\smn i})&=\mathbb{E}_{t,x_i,y_{\smn i},t+}\Big[\big(1-\frac{\phi_i}{n}\big)X_i^{\widehat{\bm{\pi}}_i}(\tau)-\phi_i Y^{\widehat{\bpi}_{\smn i}}_{\smn i}(\tau)\Big],\\
			H_i(t,x_{i},y_{\smn i})&=\mathbb{E}_{t,x_i,y_{\smn i},t+}\left[\Big(\big(1-\frac{\phi_i}{n}\big)X_i^{\widehat{\bm{\pi}}_i}(\tau)-\phi_i Y^{\widehat{\bpi}_{\smn i}}_{\smn i}(\tau)\Big)^2\right],
		\end{align}
		in which $X^{\widehat{\bm{\pi}}_i}_i$ is the $i$-th agent's wealth process under the equilibrium feedback strategy  $\widehat{\bm{\pi}}_i$.
	\end{theorem}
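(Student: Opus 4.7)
The plan is to verify the three probabilistic representations separately. The identities for $G_i$ and $H_i$ are linear Feynman--Kac statements arising from the exponential hazard weighting of $\tau$, while the identity for $V_i$ has the form $V_i=f(\cdot,G_i,H_i)$ and requires a Björk--Murgoci style decomposition to absorb the non-linear (variance) piece of $J_i$. Conditions (4) serve as the boundary conditions at $s=\infty$ that would normally come from a deterministic terminal time.

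First I would fix the equilibrium dynamics by substituting $\widehat{\bpi}$ into the SDEs for $X_i^{\widehat{\bpi}_i}$ and $Y_{\smn i}^{\widehat{\bpi}_{\smn i}}$. Applying Itô's formula to $e^{-\int_t^s \lambda(v)\drm v}\, G_i(s, X_i^{\widehat{\bpi}_i}(s), Y_{\smn i}^{\widehat{\bpi}_{\smn i}}(s))$ and using equation \eqref{eq:LG} collapses the bounded-variation part to
\[
-\lambda(s)\, e^{-\int_t^s \lambda(v)\drm v}\Big[\big(1-\tfrac{\phi_i}{n}\big)X_i^{\widehat{\bpi}_i}(s) - \phi_i Y_{\smn i}^{\widehat{\bpi}_{\smn i}}(s)\Big]\drm s.
\]
Integrating from $t$ to $T$, taking the conditional expectation, letting $T\to\infty$, and invoking the transversality condition \eqref{eq:T} with $j=G_i$ to eliminate the boundary term, one is left with an integral $\int_t^\infty \lambda(s)e^{-\int_t^s\lambda(v)\drm v}\,\E_{t,\bm{x}}[\Psi(s)]\drm s$, where $\Psi(s)$ denotes the bracketed quantity above. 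Since $\tau$ is independent of the Brownian filtration and has conditional density $\lambda(s)e^{-\int_t^s\lambda(v)\drm v}$ on $(t,\infty)$ given $\{\tau>t\}$, this integral is exactly $\E_{t,x_i,y_{\smn i},t+}[\Psi(\tau)]$, which is the claimed formula. The argument for $H_i$ is identical, with $\Psi$ replaced by $\Psi^2$ and \eqref{eq:LG} replaced by \eqref{eq:LH}.

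Next I would identify $V_i$ with $J_i(\cdot;\widehat{\bpi})$. Once $G_i$ and $H_i$ are known, the definition of $f$ in \eqref{eq:f} gives $f(t,x_i,y_{\smn i},G_i,H_i)=J_i(t,x_i,y_{\smn i};\widehat{\bpi})$, so it suffices to prove $V_i=f(\cdot,G_i,H_i)$. Substituting \eqref{eq:LG} and \eqref{eq:LH} into \eqref{eq:EHJB} evaluated at $\widehat{\bpi}$, then expanding $\mathcal{L}_i^{\widehat{\bpi}}[f(\cdot,G_i(\cdot),H_i(\cdot))]$ by the chain rule for second-order operators, the quadratic-in-$G_i$ correction matches the term $\gamma_i[G_i-((1-\phi_i/n)x_i-\phi_i y_{\smn i})]^2$ on the left. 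The surviving identity is $\mathcal{L}_i^{\widehat{\bpi}}\widetilde{V}_i=\lambda(t)\widetilde{V}_i$ for $\widetilde{V}_i:=V_i-f(\cdot,G_i,H_i)$; applying Itô to $e^{-\int_t^s\lambda(v)\drm v}\widetilde{V}_i$ and invoking \eqref{eq:T} for both $V_i$ and $f$ yields $\widetilde{V}_i\equiv 0$. The equilibrium property \eqref{eq:limit_equi} of $\widehat{\bpi}$ then follows from a needle-variation argument: for $\bpi^\epsilon$ as in Definition~\ref{definitionE}, the tower property at $t+\epsilon$ combined with Itô on $[t,t+\epsilon]$ applied to each of $V_i$, $G_i$, $H_i$ produces
\[
\liminf_{\epsilon\to 0^+}\frac{J_i(t,x_i,y_{\smn i};\widehat{\bpi})-J_i(t,x_i,y_{\smn i};\widehat{\bpi}_1,\ldots,\bpi_i^{\epsilon},\ldots,\widehat{\bpi}_n)}{\epsilon}=\mathcal{A}(\widehat{\pi}_i)-\mathcal{A}(\pi_i),
\]
where $\mathcal{A}(\pi_i)$ is the argument of the supremum in \eqref{eq:EHJB} evaluated at $\pi_i$; this is nonnegative by the definition of $\widehat{\pi}_i$ as the supremizer.

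The main obstacle will be the needle-variation expansion: correctly tracking the $-\gamma_i\,\mathrm{Var}$ contribution to $J_i(\bpi^\epsilon)$ to first order in $\epsilon$, so that the chain-rule corrections $f_G\,\mathcal{L}_i^{\widehat{\bpi}}G_i+f_H\,\mathcal{L}_i^{\widehat{\bpi}}H_i-\mathcal{L}_i^{\widehat{\bpi}}f$ reproduce exactly the algebraic form of the right-hand side of \eqref{eq:EHJB}. This cancellation is the technical heart of the Björk--Murgoci framework and is what forces the nonlinear term $\gamma_i[G_i-((1-\phi_i/n)x_i-\phi_i y_{\smn i})]^2$ onto the left-hand side of the extended HJB equation. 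The integrability needed to take expectations and pass to the limit in Itô's formula is guaranteed by admissibility (Definition~\ref{def:admissible}) together with the transversality condition \eqref{eq:T}.
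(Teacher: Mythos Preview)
Your proposal is correct and follows essentially the same route as the paper's proof: Feynman--Kac representations for $G_i$ and $H_i$ via It\^o on the discounted process, the algebraic reduction of \eqref{eq:EHJB} at $\widehat{\pi}$ to $\mathcal{L}_i^{\widehat{\pi}}(V_i-f)=\lambda(t)(V_i-f)$ followed by transversality, and then the needle-variation expansion for the equilibrium inequality. The only point where the paper is slightly more explicit is that it localizes the It\^o integrals with stopping times $\tau_n$ before taking expectations (admissibility alone only gives local martingales), but this is a routine technical step that does not alter your outline.
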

	We use Theorem \ref{th:verification} to deduce the equilibrium feedback strategy and the corresponding value function for the $i$-th agent when the trading strategies $(\widehat{\pi}_1, \ldots, \widehat{\pi}_{i-1}, \widehat{\pi}_{i+1}, \ldots, \widehat{\pi}_n)$ for the other competitors are fixed. Furthermore, we assume that the hazard rate \(\lambda(t)\) is a positive constant, that is, \(\lambda(t) \equiv \lambda\). Under this assumption, the problem is time homogeneous and the three partial differential equations (PDEs) in Theorem \ref{th:verification} degenerate to a system of ODEs.
	
	For simplicity, we omit the subscript \(i\) from notations such as \(x_i\), \(V_i\), \(G_i\), \(H_i\), \(\mu_{i1}\), \(\mu_{i2}\), \(b_i\), \(\xi_i\), \(\sigma_i\), \(\phi_i\) and \(\gamma_i\). This convention also applies to other terms introduced later, including \(\widehat{\pi}_i\), \(\rho_i\), \(p_i\), \(q_i\), \(\varrho_i\), \(A_i\), \(C_i\), \(D_i\), \(E_i\), \(F_i\), \(I_i\), \(a_i\), \(c_i\), \(\alpha_i\), \( \tilde{a}_i\), \( \tilde{c}_i\), \( \tilde{d}_i\), \( \tilde{e}_i\), \( \tilde{\beta}_i\), and \( \tilde{l}_i\). We also omit the subscript \(\smn i\) from \(y_{\smn i}\).
	
	By the linear structure of the dynamics, we make the following ansatz:
	\begin{align}
		V(x,y)&=A x^2+C y^2+D xy+E x+F y+I ,
		\label{eq:W}\\
		G(x,y)&=a x+c y+\alpha ,
		\label{eq:G}
	\end{align}
	and
	\begin{align}
		H(x,y)= \tilde{a} x^2+ \tilde{c} y^2+ \tilde{d} xy+ \tilde{e} x+ \tilde{\beta} y+ \tilde{l}.
		\label{eq:H}
	\end{align}
	Using the definition of \(f\) given in \eqref{eq:f}, we can easily compute the partial derivatives of \(f\). Then, by substituting these derivatives of \(f\), as well as the derivatives of \(V(x,y)\), \(G(x,y)\), and \(H(x,y)\) into the extended HJB equation \eqref{eq:EHJB}, we obtain
	\begin{align}
		&\lambda\bigg\{\Big[A+\gamma(a-{\varphi}_{n})^2-\mu_1 {\varphi}_{n}\Big]x^2+[C+\gamma(c+\phi)^2]y^2+\Big[D+2\gamma(a-{\varphi}_{n})(c+\phi)\\
		&\quad\quad+\mu_1{\phi}\Big]xy+\Big[E+2\gamma \alpha\Big(a-{\varphi}_{n}\Big)-\mu_2{\varphi}_{n}\Big]x+[F+2\gamma \alpha(c+\phi)+\mu_2\phi]y+I  +\gamma \alpha^2  \bigg\}\\
		&=\sup_{{\pi}}\Big\{ 	(2Ar-\mu_1ar)x^2+2Cry^2+(2Dr-\mu_1cr)xy+\big(Er+D\widehat{\underline{br}\widehat{\pi}}-\mu_1\alpha r\big)x\\
		&\quad\quad\quad\quad+\big(Fr+2\widehat{\underline{br}\widehat{\pi}}C\big)y+\widehat{\underline{br}\widehat{\pi}}F +C\Big(\widehat{\xi\widehat{\pi}}^2+\widehat{\sigma\widehat{\pi}}^2\Big)-\gamma c^2\Big(\widehat{\xi\widehat{\pi}}^2+\widehat{\sigma\widehat{\pi}}^2\Big)+\Big[E(b-r)\\	&\quad\quad\quad\quad+D\sigma\widehat{\sigma\widehat{\pi}}-\mu_1a(b-r)x
	-\mu_1c(b-r)y-\mu_1\alpha(b-r)-\mu_1c\sigma\widehat{\sigma\widehat{\pi}}-2\gamma ac\sigma\widehat{\sigma\widehat{\pi}}\\
	&\quad\quad\quad\quad+2A(b-r)x+D(b-r)y\Big]\pi +(A-\mu_1a-\gamma a^2)(\xi^2+\sigma^2)\pi^2\Big\},	\label{eq:SEHJB2}
	\end{align}		
	in which $\varphi_{n} := 1 - \phi/n$, \(\widehat{\xi\widehat{\pi}}^2=\frac{1}{n^2}\sum_{j=1,j\neq i}^n(\xi_i\widehat{\pi}_j)^2\),  \(\widehat{\underline{br}\widehat{\pi}}=\frac{1}{n}\sum_{j=1,j\neq i}^n(b_j-r)\widehat{\pi}_j\), and \(\widehat{\sigma\widehat{\pi}}=\frac{1}{n}\sum_{j=1,j\neq i}^n\sigma_j\widehat{\pi}_j\). If \(A-\mu_1 a-\gamma a^2 <0\), then from the first-order condition, the equilibrium  feedback  $\widehat{\pi}(x,y)$ is given by
	\begin{align}
		\widehat{\pi}(x,y):=&\rho(p x+q y+\varrho),\label{eq:NE}
	\end{align}
	where
	\begin{align}
		\rho &= \frac{1}{2(\xi^2+\sigma^2)},\label{eq:rho}\\
		p  &= \frac{(2A -\mu_1a)(b-r) }{\mu_1 a +\gamma a ^2-A }, \label{eq:p} \\
		q  &= \frac{(b-r)(D -\mu_1c) }{\mu_1 a +\gamma a ^2-A }, \label{eq:q} \\
		\varrho  &= \frac{(b-r)(E-\mu_1 \alpha) +\sigma \widehat{\sigma\widehat{\pi}}(D-\mu_1 c - 2 \gamma a c)}{\mu_1 a +\gamma a ^2-A }. \label{eq:k}
        	\end{align}
	From \eqref{eq:NE},	we see to determine the expression of $\hp$, we need to find the values of the three constants $p, q$, and $\varrho$. We first look for the equation for \(p\). By substituting \(H(t,x,y)\) along with its derivatives and \(\widehat{\pi}(x,y)\) from \eqref{eq:NE} into \eqref{eq:LH}, we obtain
	\begin{align}		
		&	\Big[2 \tilde{a} r+2 \tilde{a}\rho p(b-r)+\frac{1}{2} \tilde{a}\rho p^2-\lambda  \tilde{a}+\lambda \varphi_n^2\Big]x^2+\Big[2 \tilde{c}r+ \tilde{d}\rho q(b-r)+\frac{1}{2} \tilde{a} \rho q^2 -\lambda  \tilde{c}+\lambda \phi^2\Big]y^2\\		
		&+\Big[2 \tilde{a}\rho q(b-r)+2 \tilde{d}r+ \tilde{d}\rho p(b-r)+ \tilde{a}pq\rho-\lambda  \tilde{d}-2\lambda \phi\varphi_n\Big]xy+\big[ \tilde{e}r+2 \tilde{a}\rho \varrho(b-r)\\		
		&+ \tilde{e}\rho p(b-r)+ \tilde{d}\sigma \rho p\widehat{\sigma\widehat{\pi}}+ \tilde{a}\rho \varrho p-\lambda  \tilde{e}+ \tilde{d}\widehat{\underline{br}\widehat{\pi}}\big]x+\big[2 \tilde{c}\widehat{\underline{br}\widehat{\pi}}		
		+ \tilde{\beta}r+ \tilde{d}\rho \varrho(b-r)+ \tilde{e}\rho q(b-r)\\		
		&+ \tilde{d}\sigma\rho q\widehat{\sigma\widehat{\pi}}+ \tilde{a}\varrho q\rho-\lambda  \tilde{\beta}\big]y+ \tilde{\beta}\widehat{\underline{br}\widehat{\pi}}+ \tilde{c}\Big(\widehat{ \xi\widehat{\pi}}^2+\widehat{\sigma\widehat{\pi}}^2\Big)+ \tilde{e}(b-r)\rho \varrho+ \tilde{d}\sigma \rho \varrho \widehat{\sigma\widehat{\pi}}+\frac{1}{2} \tilde{a}\rho \varrho^2-\lambda  \tilde{l}=0.\label{eq:LH1}	
	\end{align}
	By setting the coefficient of \(x^2\) to be zero, we obtain
	\begin{align}\label{eq:bara0}
		2 \tilde{a} r+2 \tilde{a}\rho p(b-r)+\frac{1}{2} \tilde{a}\rho p^2 -\lambda  \tilde{a}+\lambda \varphi_n^2=0.
	\end{align}
	If \(2r + 2\rho p(b - r) + \rho^2 p^2(\xi^2 + \sigma^2) - \lambda \neq 0\), then we derive $\ba$ from \eqref{eq:bara0} as
	\begin{align}\label{eq:bara}
		 \tilde{a}=\frac{\lambda \varphi_n^2}{\lambda-[2r+2\rho p(b-r)+\frac{1}{2}\rho p^2]},
	\end{align}
	Moreover, under the equilibrium feedback strategy \(\widehat{\pi}\), we know that
	\begin{align}\label{eq:VGH}
		V(x,y) = (\mu_1 x + \mu_2) G(t,x,y) - \gamma (H(t,x,y) - G(t,x,y)^2).
	\end{align}
	By substituting the expressions for \(V\), \(G\), and \(H\) from \eqref{eq:W}, \eqref{eq:G}, and \eqref{eq:H} into \eqref{eq:VGH} and equating the coefficients of $x^2$ on both sides, we get
	\begin{align}
		A = \mu_1 a - \gamma  \tilde{a} + \gamma a^2,
		\label{eq:A}
	\end{align}
	and thus, \(A - \mu_1 a - \gamma a^2 < 0\) is equivalent to \( \tilde{a} > 0\), which implies that
	\begin{align}\label{eq:conditionP}
		\lambda>2r+2\rho p(b-r)+\frac{1}{2}\rho p^2.
	\end{align}
	
	By substituting the expression and derivatives of \(G(x, y)\) along with the expression of \(\widehat{\pi}(x,y)\), into \eqref{eq:LG}, and by further equating the coefficients of \(x\) and \(y\), and the constant terms on both sides, we obtain the following three equations
	\begin{align}
		&ar-\lambda\Big(a-\varp_{n}\Big)+a\rho p(b-r) =0,
		\label{eq:a}\\
		&cr-\lambda(c+\phi)+a\rho q(b-r)=0,
		\label{eq:c}\\
		&c\widehat{\underline{br}\widehat{\pi}}-\lambda \alpha+a\rho \varrho(b-r)=0.
		\label{eq:e}
	\end{align}
	By solving \eqref{eq:a}, \eqref{eq:c}, and \eqref{eq:e} for $a, c$, and $\alpha$, we obtain
	\begin{align}
		a &= \lambda \varphi_n[\lambda-r-\rho p(b-r)]^{-1}, \label{eq:a1} \\
		c &=(\lambda-r)^{-1}[a\rho q(b-r)-\lambda\phi], \label{eq:c1} \\
		\alpha &=\la^{-1}\big({c\widehat{\underline{br}\widehat{\pi}}+a\rho \varrho(b-r)}\big), \label{eq:e1}
	\end{align}
	in which we assume that $\la \neq r$. Furthermore, by inserting \eqref{eq:bara}, \eqref{eq:A}, and \eqref{eq:a1} into \eqref{eq:p},  we obtain the following cubic equation for $z=\rho(b-r)p$:
	\begin{align}
		&\big(\gamma\varphi_n-\mu_{1}/2\big)z^3+\big\{\mu_{1}\big[\frac{1}{2} (\lambda-r)-2\rho(b-r)^2\big]+\gamma\varphi_n[2\rho(b-r)^2-\lambda+2r]\big\}z^2+\{\rho(b-r)^2\\
		&\cdot[\mu_{1}(3\lambda-4r)+4r\gamma\varphi_n]+\gamma\varphi_n(\lambda-r)^2\}z+\rho(b-r)^2[2\gamma r^2\varphi_n-\mu_{1}(\lambda-r)(\lambda-2r)]=0.
		\label{eq:p1}	
	\end{align}	
	Next, we derive the equation for $q$. Inserting \eqref{eq:A} into \eqref{eq:q} yields that
	\begin{align}
		q&=(\ba\gamma)^{-1}(b-r)(D -c \mu_1). \label{eq:q1}
	\end{align}
	Since $c$ is given in \eqref{eq:c1}, we only need to derive the expression for \(D\). By substituting the expression of \(\widehat{\pi}\) from \eqref{eq:NE} into \eqref{eq:SEHJB2}, rearranging terms and simplifying, we obtain
	\begin{align}
		&\Big\{2Ar-\mu_1ar -\lambda\Big[A+\gamma(a-\varphi_n)^2-\mu_1 \varphi_n\Big]+\frac{\rho}{2}\gamma  \tilde{a}p^2\Big\}x^2+\Big\{2Cr-\lambda [ C+\gamma(c+\phi)^2 ]\\	
		&+\frac{\rho}{2}\gamma \tilde{a}q^2\Big\}y^2+\Big\{2Dr-\mu_1cr-\lambda\Big[D+2\gamma(a-\varphi_n)(c+\phi)+\mu_1{\phi} \Big] +\rho\gamma \tilde{a}pq\Big\}xy+\Big\{ Er\\	
		&+D\widehat{\underline{br}\widehat{\pi}}-\mu_1\alpha r-\lambda\Big[ E+2\gamma \alpha(a-\varphi_n)-\mu_2\varphi_n\Big]+\rho \gamma \tilde{a}\varrho p\Big\}x+\Big\{Fr+2\widehat{\underline{br}\widehat{\pi}}C\\	
		&-\lambda[F+2\gamma \alpha(c+\phi)+\mu_2\phi]+\rho \gamma \tilde{a}\varrho q\Big\}y+ \widehat{\underline{br}\widehat{\pi}}F +C\Big(\widehat{\xi\widehat{\pi}}^2+\widehat{\sigma\widehat{\pi}}^2\Big)-\gamma c^2\Big(\widehat{\xi\widehat{\pi}}^2+\widehat{\sigma\widehat{\pi}}^2\Big)\\	
		&-\lambda I-\lambda\gamma \alpha^2+\frac{\rho}{2}\varrho^2\gamma \tilde{a}=0.	\label{eq:EHJB3}	
	\end{align}
	Due to the arbitrariness of \(x\) and \(y\) in the above equation, we set the coefficients of $y^2$, \(xy\), $x$, $y$, and the constant terms to zero, thereby obtaining five equations for \(C, D, E, F, I\), respectively. Hence, we deduce the expressions of \(C, D, E, F, I\) as follows: \begin{align}
		C &=(2r-\lambda)^{-1}[\lambda\gamma(c+\phi)^2-\frac{\rho}{2}\gamma \tilde{a}q^2], \label{eq:C1} \\
		D &=(2r-\lambda)^{-1}[\mu_1 cr-\gamma  \tilde{a}\rho pq+2\gamma \lambda(a-{\varphi}_{n})(c+\phi)+\lambda\mu_1\phi], \label{eq:D1} \\
		E &=(r-\lambda)^{-1}[\mu_1\alpha r-D\widehat{\underline{br}\widehat{\pi}}-\rho\gamma  \tilde{a}\varrho p+2\gamma\lambda \alpha (a-{\varphi}_{n})-\mu_2\lambda \varp_n],\label{eq:E1} \\
		F &=(r-\lambda)^{-1}[ 2\lambda\gamma \alpha(c+\phi)+\lambda\mu_2\phi-\rho\gamma \tilde{a}\varrho   q-2C\widehat{\underline{br}\widehat{\pi}}], \label{eq:F1} \\
		I &=\lambda^{-1}\Big[(C-\gamma c^2)\Big(\widehat{\xi\widehat{\pi}}^2+\widehat{\sigma\widehat{\pi}}^2\Big)-\lambda \gamma \alpha^2+\frac{\rho}{2}\gamma \tilde{a}\varrho^2+F\widehat{\underline{br}\widehat{\pi}}\Big]. \label{eq:I1}
	\end{align}
	By substituting \eqref{eq:bara}, \eqref{eq:c1}, and \eqref{eq:D1} into \eqref{eq:q1} and simplifying the expressions, we obtain the equation of \(q\):
	\begin{align}\label{eq:q2}
		\{\gamma \tilde{a}(\lambda-r)[2r-\lambda+(b-r)\rho p]-[2\gamma\lambda(a-\varphi_n)+\mu_1(\lambda-r)]a\rho(b-r)^2\}q=-2\gamma\lambda\phi r(a-\varphi_n)(b-r).
	\end{align}
Finally, by substituting \eqref{eq:bara}, \eqref{eq:A}, \eqref{eq:a1}, \eqref{eq:e1}, \eqref{eq:D1}, and \eqref{eq:E1} into \eqref{eq:k}, we derive the expression of \(\varrho\).
Define
	\begin{align}\label{eq:Qi}
	Q :=\gamma \tilde{a}[r-\lambda+\rho p (b - r)]-a \rho(b-r)^2[2\gamma(a-1+\phi/n)+\mu_{1}],
	\end{align}
	and assume that \(Q \neq 0\). By using \(\varrho\) in \eqref{eq:k}, we obtain an expression of \(\widehat{\pi}(x, y)\) in terms of other competitors' equilibrium  feedback  strategy as follows
	\begin{align}\label{eq:pi_i}
		\widehat{\pi}(x, y)= \rho p x + \rho q y + k_{1} \widehat{\sigma\widehat{\pi}} + k_{2}\widehat{\underline{br}\widehat{\pi}}+k_{3},
	\end{align}
	in which
	\begin{align}	
k_{1}&=Q^{-1}\rho\sigma(r-\lambda)(D-\mu_{1}c-2\gamma a c),\label{eq:k1}\\
k_{2}&=Q^{-1}\rho(b-r)[2\gamma(a-1+\phi/n)c+\mu_{1}c -D],\label{eq:k2}\\
k_{3}&=-Q^{-1}\mu_{2}\rho\lambda(1-\phi/n)(b-r).\label{eq:k3}
	\end{align}
We summarize the above results in the following theorem. See Appendix \ref{app:B} for the proof of Theorem \ref{th:main}.
\begin{bluepar}	
For the $i$-th agent, we can rewrite \eqref{eq:pi_i} as follows:
\begin{align}
		\widehat{\pi}_i(x_i, y_{\smn i}) &= \rho_i p_i x_i + \rho_i q_i y_{\smn i} + k_{i1} \widehat{\sigma\widehat{\pi}} + k_{i2}\widehat{\underline{br}\widehat{\pi}}+k_{i3} \\
		&= \rho_i p_i x_i + g(y_{\smn i}, \{\widehat{\pi}_j\}_{j \neq i}),  \label{eq:pi_i1}
\end{align}
in which $g$ is a linear combination of $y_{\smn i}$ and $\{\widehat{\pi}_j\}_{j \neq i}$. Given $\{\widehat{\bm{\pi}}_j\}_{j\neq i}$ are admissible strategies, we can find constants $\mathfrak{A}, \mathfrak{Q}, \mathfrak{B}$, and $\mathfrak{C}$ such that
\begin{align}\label{eq:up_bound_X}
&\E_{t, x_i}\left[\int_t^s \widehat{\pi}_i^2\left(X_i^{\widehat{\bm{\pi}}_i}(v), Y^{\widehat{\bpi}_{\smn i}}_{\smn i}(v)\right)\drm v \right] \\
&\le 2\rho^2_i p^2_i \int_t^s \E_{t, x_i}[\lvert  X^{\widehat{\bpi}_i}_i(v) \rvert^2]\drm v
+ \mathfrak{A}\e^{\mathfrak{Q}(s-t)} + \mathfrak{B}(s-t)+\mathfrak{C}.
\end{align}
Define
\begin{align}\label{eq:fkp}
\mathfrak{k}_i = (b_i - r) + \xi_i^2 + \sigma_i^2, \qquad \qquad
\mathfrak{P}_i = b_i+r + 2\rho^2_i p^2_i \mathfrak{k}_i.
\end{align}
To ensure that the above constructed strategy $\widehat{\bpi}_i = \{\widehat{\pi}_i\}$ given in \eqref{eq:pi_i1} is admissible, we make the following assumption:
\begin{assumption}\label{assum:1a}
$\la > \max\{\mathfrak{Q},\mathfrak{P}_i, 2r+2\rho_i p_i(b_i-r)+\frac{1}{2}\rho_i p^2_i\}.$
\end{assumption}
\end{bluepar}
\begin{theorem}\label{th:main}
For $i \in \{1, \ldots, n\}$, and let $Q_i \neq 0$ and the competitors investment strategies $\widehat{\bm{\pi}}_j \in \Pi_j, j\neq i$ be given. Then, under Assumption \ref{assum:1a},
 the $i$-th agent's equilibrium investment strategy $\widehat{\bm{\pi}}_i = \{\widehat{\pi}_i\}$ is given by
 \eqref{eq:pi_i1},
 in which \(\rho_i\) is defined in \eqref{eq:rho}, \(p_i=z_i/[\rho_i(b_i-r)]\) with \(z_i\) solving \eqref{eq:p1}, \(q_i\) solves \eqref{eq:q2}, and \(k_{ij}\), for \(j=1,2,3\), are given in \(\eqref{eq:k1}-\eqref{eq:k3}\).
The corresponding equilibrium value function is given by
\begin{align}\label{eq:VV}
V_i( x_i,y_{\smn i})=A_i x_i^2+C_i y_{\smn i}^2+D_i x_iy_{\smn i}+E_i x_i+F_i y_{\smn i}+I_i,
\end{align}
in which the functions $A_i $, $C_i $,\ $D_i$,\ $E_i $, \(F_i\),\ and $I_i $ are given in \eqref{eq:A},\ \eqref{eq:C1},\ \eqref{eq:D1},\ \eqref{eq:E1},\ \eqref{eq:F1}, and \eqref{eq:I1}\ respectively.
\end{theorem}

\begin{remark}\label{re:prop}
	From \eqref{eq:q2} and \eqref{eq:c1}, we observe that \(q_i\) and \(c_i\) are proportional to the competitive parameter \(\phi_i\). Moreover, from \eqref{eq:D1}, we know that \(D_i\) is also proportional to \(\phi_i\). Hence, \(k_{ij}\) (for \(j=1,2\)) given in \eqref{eq:k1} and\eqref{eq:k2} are proportional to \(\phi_i\). In view of this, the equilibrium  feedback  strategy given in \eqref{eq:pi_i} can be divided into two parts: One part is \(\rho_i p_i x_i+k_{i3}\), which is independent of the competition parameter \(\phi_i\); the other part is $ \rho_i q_i y_{\smn i} + k_{i1} \widehat{\sigma\widehat{\pi}} + k_{i2}\widehat{\underline{br}\widehat{\pi}}$, proportional to \(\phi_i\).
\end{remark}
	Define
	\begin{align}
		\psi_{i,n} = \left(1- \frac{\phi_i}{n}\right)\xi_i^2+\sigma_i^2,\quad\quad\Psi_n=\sum_{i=1}^n\frac{\phi_i\sigma_i^2}{n\psi_{i,n}} \le 1,\label{eq:Psi}
	\end{align}
	and
	\begin{align}
		\Phi_n=\sum_{i=1}^n \frac{\mu_{i2}\sigma_i(b_i-r)}{2n\gamma_i\psi_{i,n}},\quad\quad \Theta_{n}=\sum_{i=1}^n \frac{ \sigma_i b_i}{n\gamma_i\psi_{i,n}}.  \label{eq:Phii0}
	\end{align}

In the following corollary, we investigate the equilibrium feedback strategy and the value function when the risk aversion function is state-independent and the risk-free interest rate is zero.

	\begin{corollary}\label{cor:020}
		Suppose that \(\mu_{i1}=0\), \(\mu_{i2}=2\), and \(r=0\).
		Then, for the \(n\)-agent game with a state-independent risk aversion function,
		\begin{itemize}
			\item[$(1)$] if \(\Psi_n<1\), the equilibrium feedback strategy is a constant, given by
			\begin{align}\label{eq:01pi}
				\widehat{\pi}_i =\frac{\phi_i\sigma_i}{\psi_{i,n}} \frac{\Theta_{n}}{1-\Psi_n}+\frac{b_i}{\gamma_i\psi_{i,n}},
			\end{align}
			in which \(\psi_{i,n}\), \(\Psi_n\), and \(\Theta_{n}\) are given in \eqref{eq:Psi} and \eqref{eq:Phii0}.
		\item[$(2)$] if  \(\Psi_n=1\), the equilibrium feedback strategy does not exist.
		\end{itemize}
	\end{corollary}

	\begin{remark}
		In this remark, we consider the relative performance investment problem for agents with CARA preferences. We assume that the financial market is the same as that in Subsection \ref{subsec:model} except that the interest rate \(r = 0\), and the value function is given by
		\begin{align}
			\sup_{\bpi_i\in\bm{\Pi}_i}\mathbb{E}\Bigg[-\exp\Bigg(-\frac{1}{\gamma_i}\Big(X^{\bpi_i}_{i}(\tau)-\phi_i\bar{X}^{\bpi}(\tau)\Big)\Bigg)\Bigg],
		\end{align}
		where the definition of the admissible set \(\bm{\Pi}_i\) is the same as that in Subsection \ref{subsec:model}.
		Then, it is straightforward to show that the equilibrium investment strategy for this problem is also given by \eqref{eq:01pi},
		indicating that when the risk-free interest rate $r=0$, the equilibrium investment strategy under CARA preferences is the same as that of the mean-variance problem with a state independent risk aversion.
	\end{remark}
	
	At the end of this subsection, we consider the situation where the \(i\)-th agent is not concerned about the performance of the others, that is, \(\phi_i = 0\). The result for this case is presented in the following corollary. Define
\[\iota_i:=\frac{1}{2}\frac{(b_i-r)^2}{\xi_i^2+\sigma_i^2}.\]
	\begin{corollary}\label{cor:phi0}
		Suppose \(\phi_i = 0\), then the equilibrium  feedback  strategy equals
		\begin{align}\label{eq:phi0}
			\widehat{\pi}_i(x_i)=\zeta_ix_i + \varsigma_i,
		\end{align}
		in which \(\zeta_i=\mathfrak{z}_i/(b_i-r)\), and \(\mathfrak{z}_i\) solves the following equation
		\begin{align}
			&\bigg(\gamma_i-\frac{\mu_{i1}}{2}\bigg)\mathfrak{z}_i^3+\bigg\{\mu_{i1}\bigg[\frac{1}{2} (\lambda-r)-2\iota_i\bigg]+\gamma_i[2\iota_i-\lambda+2r]\bigg\}\mathfrak{z}_i^2 \\
			&+\{\iota_i[\mu_{i1}(3\lambda-4r)+4r\gamma_i]+\gamma_i(\lambda-r)^2\}\mathfrak{z}_i+\iota_i[2\gamma_i r^2-\mu_{i1}(\lambda-r)(\lambda-2r)]=0, \label{eq:pphi0}
		\end{align}
		with condition
		$\lambda>2r+2\mathfrak{z}_i+\frac{1}{2\iota_i}\mathfrak{z}_i^2,$
		and,
		\begin{align}\label{eq:varsigmai}
			\varsigma_i=-\frac{\mu_{i2}\lambda\iota_i(b_i-r)^{-1}}{\gamma_i \tilde{a}_i[r-\lambda+\mathfrak{z}_i]-a_i\iota_i[2\gamma_i(a_i-1)+\mu_{i1}]},
		\end{align}
		with
		\begin{align}
			a_i&=\frac{\lambda}{\lambda-r-\mathfrak{z}_i},\\
			 \tilde{a}_i&=\frac{\lambda}{\lambda-[2r+2\mathfrak{z}_i+\frac{1}{2\iota_i}\mathfrak{z}_i^2]}.
		\end{align}
		

		Furthermore,  if \(\mu_{i1}= 2\) and $\mu_{i2}=0$, then \(\varsigma_i\) defined in \eqref{eq:varsigmai} equals 0. Hence, in this case, the equilibrium  feedback  strategy equals
		\begin{align}\label{eq:pi00}
			\widehat{\pi}_{i}(x_i)=\nu_ix_i,
		\end{align}
		in which \(\nu_i=\mathfrak{o}_i/(b_i-r)\), and \(\mathfrak{o}_i\) solves the following equation
		\begin{align}
			&\bigg(1-\frac{1}{\gamma_i}\bigg)\mathfrak{o}_i^3+\bigg\{2\iota_i\bigg(1-\frac{2}{\gamma_i}\bigg)+(\lambda-r)\bigg(\frac{1}{\gamma_i}-
			2\bigg)+\lambda\bigg\}\mathfrak{o}_i^2+\bigg\{\frac{2\iota_i}{\gamma_i}(3\lambda-4r)+4r\iota_i+(\lambda-r)^2\bigg\}\mathfrak{o}_i\\
			&+2\iota_i\bigg\{ r^2-\frac{1}{\gamma_i}(\lambda-r)(\lambda-2r)\bigg\}=0,
		\end{align}
		with condition
		$\lambda>2r+2\mathfrak{o}_i+\frac{1}{2\iota_i}\mathfrak{o}_i^2.$
		
	\end{corollary}
	
	\begin{remark}
		In this remark, we consider that there is only one agent and one stock, that is, \(\phi_i = 0\), \(\xi_i = 0\), and \(i = 1\). When the risk-aversion function is state-independent, that is, \(\mu_{i1}=0\) and \(\mu_{i2}=2\), the equilibrium strategy \(\widehat{\pi}_i\) given in \eqref{eq:phi0} is the same as the equilibrium  feedback  strategy shown in Theorem 3.2 in \citet{landriault2018equilibrium}. Another interesting result we find is that, when the risk aversion function is state-dependent, that is, \(\mu_{i1}=2\) and \(\mu_{i2}=0\), the equilibrium strategy \(\widehat{\pi}_i\) given in \(\eqref{eq:pi00}\) is consistent with the equilibrium  feedback  strategy given in Theorem 3.3 in \citet{landriault2018equilibrium}. Note that in our objective function, we introduce a linear form of ``pseudo'' risk aversion, which multiplies the expectation of terminal wealth, although the objective in the one-agent scenario \((\mu_{i1}=2\) and \(\mu_{i2}=0)\) investigated in our paper is the same as that considered in \citet{landriault2018equilibrium}, the approach we adopt to deal with time-inconsistency differs from theirs.
	\end{remark}

Under the general model framework, it remains difficult to solve the system of equations \eqref{eq:pi_i1} for an arbitrary number of heterogeneous agents, that is, the agents interact dynamically and competitively. In the following two subsections, we consider two special cases: (i) the agents are homogeneous, that is, the parameters of all agents are identical; (ii) the agents are heterogeneous with $n=2.$
In both cases, we assume that the individual's hazard rate is a positive constant that is, \(\la(t)\equiv \la\).	

\subsection{Homogeneous n-agent game}\label{sec:homogenous case}
In this subsection, we assume that the agents are homogeneous, meaning that the parameters for all agents in the network are identical. 	Under this setting, the parameters \(\mu_{i1}\),~\(\mu_{i2}\),~\(b_i\),~\(\xi_i\),~\(\sigma_i\),~\(\phi_i\), and \(\gamma_i\) for \(i=1,\cdots,n\) are independent of the index  \(i\) and can thus be abbreviated to \(\mu_{1}\),~\(\mu_{2}\),~\(b\),~\(\xi\),~\(\sigma\),~\(\phi\), and \(\gamma\). Accordingly, all agents are faced with the symmetric optimization problem.  Throughout the following discussion, a subscript \(i\) or \(-i\) indicates a quantity that is specific to agent \(i\), while its absence denotes a common parameter or value shared by all agents.

 In this homogeneous \(n\)-agent game, we postulate that all equilibria take the affine form
 \begin{align}\label{eq:PQRpi}
 	\widehat{\pi}_i(x_i,y_{\smn i}) = \mathfrak{G}x_i + \mathfrak{D}y_{-i} + \mathfrak{R},
 \end{align}
 for any \(i=1,\cdots,n\), where the coefficients \(\mathfrak{G}\), \(\mathfrak{D}\), and \(\mathfrak{R}\) are identical constants across all agents, and need to be further determined. From Theorem \ref{th:main}, for any fixed $i \in \{1,\cdots,n\}$, the equilibrium investment strategy \(\hat{\bm{\pi}}_i\) for the \(i\)-th agent can be expressed by
  \begin{align}\label{eq:pifinalH}
  	\widehat{\pi}_i(x_i,y_{\smn i}) = \rho px_i + \rho qy_{-i} + \delta \widehat{{\pi}}_{-i} + k_3,
  \end{align}
with \(\widehat{{\pi}}_{-i}:=\frac{1}{n}\sum^n_{j=1, j\neq i}\widehat{\pi}_j\), $\delta : = k_{1}\sigma + k_2{(b-r)}$, and $\rho, p, q, k_i, i=1,2,3$ are constants given in Theorem \ref{th:main}. To obtain explicit expression for \(\widehat{\pi}_i\), we first compute the expression of  \(\widehat{{\pi}}_{-i}\), and then we show the values of \(\mathfrak{G}\), \(\mathfrak{D}\), and \(\mathfrak{R}\) below using a fixed-point method.

Note that \(y_{-j}=\frac{1}{n}\sum^n_{k=1, k\neq j}x_k = y_{-i}+\frac{x_i-x_j}{n}\). Then, for any \(j\neq i\), we have
  \begin{align*}
  \widehat{\pi}_j(x_j,y_{\smn j}) & = \mathfrak{G}x_j + \mathfrak{D}y_{-j} + \mathfrak{R} \\
  	& = \mathfrak{G}x_j + \mathfrak{D}\left(y_{-i}+\dfrac{x_i-x_j}{n}\right) + \mathfrak{R}\\
  & =  \left(\mathfrak{G}-\dfrac{\mathfrak{D}}{n}\right)x_j + \mathfrak{D} y_{\smn i} + \frac{\mathfrak{D}}{n}x_i + \mathfrak{R}.
  \end{align*}
  Summing over \(j\neq i\), dividing by \(n\), and using the relation \(\sum^n_{j=1, j\neq i}x_j = ny_{\smn i}\), it follows that
  \begin{align}
  \widehat{{\pi}}_{-i}	= \frac{1}{n}\sum^n_{j=1, j\neq i}\widehat{\pi}_j &= \frac{1}{n}\left(\mathfrak{G}-\frac{\mathfrak{D}}{n}\right)\sum_{j\neq i}x_j + \frac{\mathfrak{D}(n-1)}{n}y_{\smn i} + \frac{(n-1)\mathfrak{D}}{n^2}x_i  + \frac{n-1}{n}\mathfrak{R}\\
& = \left(\mathfrak{G}+\frac{(n-2)\mathfrak{D}}{n}\right)y_{\smn i} + \frac{(n-1)\mathfrak{D}}{n^2}x_i + \frac{n-1}{n}\mathfrak{R}.
  \label{eq:tilde_pi_expr}
  \end{align}
  By substituting \eqref{eq:tilde_pi_expr} into \eqref{eq:pifinalH} and regrouping the terms, we have
  \begin{align*}
  	\widehat{\pi}_i(x_i, y_{\smn i}) &= \left[\rho p+\delta\dfrac{(n-1)\mathfrak{D}}{n^2}\right]x_i + \left[\rho q+\delta\left(\mathfrak{G}+\dfrac{(n-2)\mathfrak{D}}{n}\right)\right]y_{\smn i}
  + \dfrac{n-1}{n} \delta\mathfrak{R} + k_3.
  \end{align*}

  Comparing the coefficients of the above equation with our ansatz \eqref{eq:PQRpi}, we obtain the following linear system of equations for \((\mathfrak{G}, \mathfrak{D}, \mathfrak{R})\):
  \begin{equation}
  	\begin{cases}
  		\mathfrak{G} = \rho p + \frac{n-1}{n^2}\delta\mathfrak{D}, \\
  		\mathfrak{D} = \rho q + \delta\left(\mathfrak{G}+\frac{n-2}{n}\mathfrak{D}\right), \\
  		\mathfrak{R}= \frac{n-1}{n}\delta\mathfrak{R} + k_3.
  	\end{cases}
  \end{equation}

By solving the above system of equations, and assuming that $n + \delta \neq 0$ and $(1 - \delta)n + \delta \neq 0$\footnote{Otherwise, the system of equations will degenerate, leading to no solutions or infinitely many solutions.}, we obtain
  \begin{align}
  		\mathfrak{G} &= \rho p + \frac{(n-1)\delta\rho (q + \delta p)}{(\delta+(1-\delta)n)(\delta+n)}, \label{eq:fG}\\
      \mathfrak{D} &= \frac{n^2 \rho (q + \delta p)}{(\delta+(1-\delta)n)(\delta+n)}, \label{eq:fD}\\
  		\mathfrak{R} &= \frac{nk_3}{\delta + (1 - \delta)n}. \label{eq:fR}
  \end{align}

  We formalize the complete characterization of the equilibria in the following theorem.
 \begin{theorem}
 Assume that $n + \delta \neq 0$ and $(1 - \delta)n + \delta \neq 0$, then for all $i \in \{1, \ldots, n \}$,
the equilibrium investment strategy for the homogeneous \(n\)-agent game is given as follows:
\begin{align}
 	\widehat{\pi}_i(x_i,y_{\smn i}) = \mathfrak{G}x_i + \mathfrak{D}y_{-i} + \mathfrak{R},
 \end{align}
in which \(\mathfrak{G},  \mathfrak{D}, \mathfrak{R}\) are given in \eqref{eq:fG}, \eqref{eq:fD}, \eqref{eq:fR}. \(\rho\) is defined in \eqref{eq:rho}, \(p={z}/{\rho(b-r)}\) with \(z\) solving \eqref{eq:p1}, \(q\) solves \eqref{eq:q2}, and \(k_i\) for \(i=1,2,3\), are given in \eqref{eq:k1}-\eqref{eq:k3}.
\end{theorem}
 \begin{remark}
    In the homogeneous $n$-agent game with state-independent risk aversion, the absence of wealth-dependent risk preferences forces all equilibrium strategies to be constant. Essentially, the optimal investment behavior is identical for all agents ($\hat{\pi}_i \equiv \hat{\pi}$), which reduces the multi-agent interaction into a single agent framework. Furthermore, the homogeneous $n$-agent framework developed in this subsection  provides the foundation for our subsequent asymptotic analysis.  As we will formally demonstrate in the next section, by taking the limit as $n \to +\infty$, the homogeneous $n$-agent game converges to the corresponding mean field game (MFG).
 \end{remark}

    \subsection{Heterogeneous 2-agent game}\label{sec:heterogeneous case}
    In this section, we study the heterogeneous scenario and focus on the game of 2 agents, which allows us to find the explicit equilibrium strategy for each agent. Define
    \[\overline{\sigma\widehat{\pi}}^{(2)}:=\frac{1}{2}\sum_{i = 1}^2\sigma_i\widehat{\pi}_i \quad  \text{and} \quad \overline{\underline{br}\widehat{\pi}}^{(2)}:=\frac{1}{2}\sum_{i = 1}^2(b_i - r)\widehat{\pi}_i.
    \]
    Then, by noting that $y_{-i} = \dfrac{x_2}{2}$, we rewrite the linear system of equations \eqref{eq:pi_i} for $i=1,2$ as follows:
\begin{align}
		\widehat{\pi}_1( x_1,x_2) & =\Xi_1^{-1}\left[\rho_1 p_1x_1+ \dfrac{\rho_1 q_1}{2}x_2+k_{11}\overline{\sigma\widehat{\pi}}^{(2)}+k_{12}\overline{\underline{br}\widehat{\pi}}^{(2)}+k_{13}\right], \label{eq:hpi1}\\
		\widehat{\pi}_2( x_1,x_2) & =\Xi_2^{-1}\left[\dfrac{\rho_2 q_2 }{2}x_1 + \rho_2 p_2x_2 +k_{21}\overline{\sigma\widehat{\pi}}^{(2)}+k_{22}\overline{\underline{br}\widehat{\pi}}^{(2)}+k_{23}\right], \label{eq:hpi2}
\end{align}
where  \begin{align}\label{eq:Ni}
\Xi_i=1 + \dfrac{k_{i2}(b_i-r)+k_{i1}\sigma_i}{2},   ~~~~\mbox{for} ~~~i=1,2.
\end{align}

	We now derive the closed forms of \(\overline{\sigma\widehat{\pi}}^{(2)}\) and \(\overline{\underline{br}\widehat{\pi}}^{(2)}\).  Through straightforward calculations, we obtain two equations for \(\overline{\sigma\widehat{\pi}}^{(2)}\) and \(\overline{\underline{br}\widehat{\pi}}^{(2)}\):
	\begin{align}
		&\overline{\sigma\rho px}^{(2)}+\overline{\sigma\rho qy}^{(2)}+\big(\overline{\sigma k_1}^{(2)}-1\big)\overline{\sigma\widehat{\pi}}^{(2)}+\overline{\sigma k_2}^{(2)}\cdot\overline{\underline{br}\widehat{\pi}}^{(2)}+\overline{\sigma k_3}^{(2)}=0,\label{eq:system1}\\
		& \overline{\underline{br}\rho px}^{(2)}+\overline{\underline{br}\rho qy}^{(2)}+\overline{\underline{br}k_1}^{(2)}\overline{\sigma\widehat{\pi}}^{(2)}+\big(\overline{\underline{br}k_2}^{(2)}-1\big)\overline{\underline{br}\widehat{\pi}}^{(2)}+\overline{\underline{br}k_3}^{(2)}=0,\label{eq:system2}
	\end{align}
in which
\begin{align}
\overline{\sigma\rho px}^{(2)}&=\frac{\sigma_1\rho_1 p_1x_1}{2\Xi_1} + \frac{\sigma_2\rho_2 p_2x_2}{2\Xi_2},& \overline{\sigma\rho qy}^{(2)}&=\frac{\sigma_1\rho_1 q_1x_2}{2\Xi_1} + \frac{\sigma_2\rho_2 q_2x_1}{2\Xi_2},\label{eq:coeff1}\\
\overline{\underline{br}\rho px}^{(2)}&=\frac{(b_1-r)\rho_1 p_1x_1}{2\Xi_1}+\frac{(b_2-r)\rho_2 p_2x_2}{2\Xi_2},& \overline{\underline{br}\rho qy}^{(2)}&=\frac{(b_1-r)\rho_1 q_1x_2}{2\Xi_1} + \frac{(b_2-r)\rho_2 q_2x_1}{2\Xi_2},\\ \label{eq:coeff2} \\
\overline{\sigma k_j}^{(2)}&= \frac{\sigma_1k_{1j}}{2\Xi_1}+\frac{\sigma_2k_{2j}}{2\Xi_2},
&\overline{\underline{br}k_j}^{(2)}&= \frac{(b_1-r)k_{1j}}{2\Xi_1}+\frac{(b_2-r)k_{2j}}{2\Xi_2},\label{eq:coeff3}
	\end{align}
	for \(j=1,2,3\).
	
	By further computations, we derive that if \(\big(\overline{\underline{br}k_2}^{(2)}-1\big)\big(\overline{\sigma k_1}^{(2)}-1\big)-\overline{\sigma k_2}^{(2)}\cdot\overline{\underline{br}k_1}^{(2)} \neq 0\), the closed forms of \(	\overline{\sigma\widehat{\pi}}^{(2)}\) and \(	\overline{\underline{br}\widehat{\pi}}^{(2)}\) are given by
		\begin{align}
		&\overline{\sigma\widehat{\pi}}^{(2)} =\frac{\big(1-\overline{\underline{br}k_2}^{(2)}\big)\big(\overline{\sigma\rho px}^{(2)}+\overline{\sigma\rho qy}^{(2)}+\overline{\sigma k_3}^{(2)}\big)+\overline{\sigma k_2}^{(2)}\big(\overline{\underline{br}\rho px}^{(2)}+\overline{\underline{br}\rho qy}^{(2)}+\overline{\underline{br} k_3}^{(2)}\big)}{\big(\overline{\underline{br}k_2}^{(2)}-1\big)\big(\overline{\sigma k_1}^{(2)}-1\big)-\overline{\sigma k_2}^{(2)}\cdot\overline{\underline{br}k_1}^{(2)}},\\
        \label{eq:overpisigma} \\
&\overline{\underline{br}\widehat{\pi}}^{(2)}=\frac{\big(1-\overline{\sigma k_1}^{(2)}\big)\big(\overline{\underline{br} \rho px}^{(2)}+\overline{\underline{br}\rho qy}^{(2)}+\overline{\underline{br} k_3}^{(2)}\big)+\overline{\underline{br}k_1}^{(2)}\big(\overline{\sigma\rho px}^{(2)}+\overline{\sigma\rho qy}^{(2)}+\overline{\sigma k_3}^{(2)}\big)}{\big(\overline{\underline{br}k_2}^{(2)}-1\big)\big(\overline{\sigma k_1}^{(2)}-1\big)-\overline{\sigma k_2}^{(2)}\cdot\overline{\underline{br}k_1}^{(2)}}.\\ \label{eq:overpibr}
	\end{align}

\blue{
	By plugging \eqref{eq:overpisigma} and \eqref{eq:overpibr} into \eqref{eq:hpi1} and \eqref{eq:hpi2}, we obtain the equilibrium feedback strategy \footnote{\blue{In this case, the values of $\overline{\sigma\widehat{\pi}}^{(2)}$ and $\overline{\underline{br}\widehat{\pi}}^{(2)}$ are uniquely determined. However, the value of the equilibrium feedback strategy $\widehat{\bf{\pi}}$ is not unique, see the item 1 in Remark \ref{remark:2.5}}. }.
}If \(\big(\overline{\underline{br}k_2}^{(2)}-1\big)\big(\overline{\sigma k_1}^{(2)}-1\big)-\overline{\sigma k_2}^{(2)}\cdot\overline{\underline{br}k_1}^{(2)} = 0\), the linear equations \eqref{eq:system1} and \eqref{eq:system2} are solvable if and only if there exists a constant \(\kappa\) such that
	\begin{align}
		&\overline{\sigma\rho px}^{(2)}+\overline{\sigma \rho qy}^{(2)}+\overline{\sigma k_3}^{(2)}=\kappa\left[\overline{\underline{br} \rho px}^{(2)}+\overline{\underline{br} \rho qy}^{(2)}+\overline{\underline{br}k_3}^{(2)}\right],\label{eq:kappa1}\\
		&\overline{\sigma k_1}^{(2)}-1=\kappa\overline{\underline{br} k_1}^{(2)},\quad\quad
		\overline{\sigma k_2}^{(2)}=\kappa\left(\overline{\underline{br} k_2}^{(2)}-1\right). \label{eq:kappa2}
	\end{align}
	
	We summarize the above analysis in the following theorem, which presents the equilibrium  feedback  strategy for a two-agent mean-variance problem.
	\begin{theorem}[n=2]\label{th:main2}
		Let   \(Q_i\neq 0\), and \(\Xi_i\neq 0\), for $i=1,2.$ Then, for the mean-variance problem,
		\begin{itemize}
			\item[$(1)$]  when \(\big(\overline{\underline{br}k_2}^{(2)}-1\big)\big(\overline{\sigma k_1}^{(2)}-1\big)-\overline{\sigma k_2}^{(2)}\cdot\overline{\underline{br}k_1}^{(2)}\neq 0\), the expression for \(\widehat{\pi}_i(x_1,x_2)\), $i=1,2$ is given by \eqref{eq:hpi1} and \eqref{eq:hpi2}, in which \(\rho_i\) is defined in \eqref{eq:rho}, \(p_i=z_i/[\rho_i(b_i-r)]\) with \(z_i\) solving \eqref{eq:p1}, \(q_i\) solves \eqref{eq:q2}, and \(k_{ij}\), for \(j=1,2,3\), are given in \(\eqref{eq:k1}-\eqref{eq:k3}\). The expressions for \(\overline{\sigma\widehat{\pi}}^{(2)}\) and \(\overline{\underline{br} \widehat{\pi}}^{(2)}\) are presented in \eqref{eq:overpisigma} and \eqref{eq:overpibr}, respectively.
\item[$(2)$]  when \(\big(\overline{\underline{br}k_2}^{(2)}-1\big)\big(\overline{\sigma k_1}^{(2)}-1\big)-\overline{\sigma k_2}^{(2)}\cdot\overline{\underline{br}k_1}^{(2)}= 0\),
\begin{itemize}
\item[$(a)$] if there exist a constant \(\kappa\) such that \eqref{eq:kappa1} and \eqref{eq:kappa2} hold, the equilibrium feedback strategy exists, and the equilibrium feedback strategy \(\widehat{\pi}_i\) (for \(i=1,2\)) satisfies the equation \eqref{eq:system1} or \eqref{eq:system2}.
				
					\item[$(b)$]  if no such constant \(\kappa\) exists, the equilibrium feedback strategy does not exist.
			\end{itemize}
	
		\end{itemize}
	\end{theorem}
	
\begin{remark}\label{remark:2.5}
If \(\big(\overline{\underline{br}k_2}^{(2)}-1\big)\big(\overline{\sigma k_1}^{(2)}-1\big)-\overline{\sigma k_2}^{(2)}\cdot\overline{\underline{br}k_1}^{(2)}\neq 0\), because \(p_i=z_i/[\rho_i(b_i-r)]\) with \(z_i\) satisfying \eqref{eq:p1}, then there can be more than one but at most three linear equilibrium feedback strategies,
\begin{bluepar}
which is consistent with \citet{landriault2018equilibrium}. To ensure uniqueness, we impose an admissibility criterion: we select the specific real root that satisfies the conditions of Theorem \ref{th:verification} and yields the  maximal mean-variance objective value. This selection uniquely identifies the economically relevant equilibrium strategy.
\end{bluepar}
If \(\big(\overline{\underline{br}k_2}^{(2)}-1\big)\big(\overline{\sigma k_1}^{(2)}-1\big)-\overline{\sigma k_2}^{(2)}\cdot\overline{\underline{br}k_1}^{(2)}= 0\)\footnote{\blue{The degenerate condition arising from the coupling within the coefficient system, which prevents closed-form expressions for the equilibrium  coefficients. In the numerical implementation, parameter configurations leading to such degeneracy are excluded.}}, and there exists a constant \(\kappa\) such that \eqref{eq:kappa1} and \eqref{eq:kappa2} hold, then there may be infinitely many linear equilibrium feedback strategies.
\end{remark}

In the following corollary, we state the limiting equilibrium feedback strategy as \(\lambda\to \infty\). Recall the definitions of $\psi_{i,n}, \Psi_n$, and $\Phi_n$ given in \eqref{eq:Psi} and \eqref{eq:Phii0}.
\begin{align}\label{eq:Upsilon}
\Upsilon_2=\frac{\mu_{11}\sigma_1(b_1-r)}{4\gamma_1\psi_{1,2}}x_1 + \frac{\mu_{21}\sigma_2(b_2-r)}{4\gamma_2\psi_{2,2}}x_2.
\end{align}
	\begin{corollary}\label{coro:1.8}
		As \(\lambda\to \infty\), the limiting equilibrium feedback strategy equals
		\begin{itemize}
			\item[$(1)$] If \(\Psi_2 < 1\), then
             \begin{align}
				\widehat{\pi}_1(x_1, x_2)& = \left(\mathfrak{p}_1 + \frac{\phi_1\sigma_1}{\psi_{1,2}}\mathfrak{q}_1 \right) x_1 + \frac{\phi_1\sigma_1}{\psi_{1,2}}\mathfrak{p}_2 x_2 + \mathfrak{t}_1, \label{eq:pi1.8}\\
               \widehat{\pi}_2(x_1, x_2)& =\left(\mathfrak{p}_2 + \frac{\phi_2\sigma_2}{\psi_{2,2}}\mathfrak{q}_2\right) x_1 + \frac{\phi_2\sigma_2}{\psi_{2,2}}\mathfrak{q}_1 x_2 + \mathfrak{t}_2, \label{eq:pi2.8}
			\end{align}
where, for $i=1,2$,
\begin{align}
\mathfrak{p}_i = \dfrac{\mu_{i1}(b_i-r)}{2\gamma_i\psi_{i,2}}, ~~
\mathfrak{q}_i = \dfrac{\mu_{i1}\sig_i(b_i-r)}{4\gamma_i\psi_{i,2}(1 - \Psi_2) }, ~~
\mathfrak{t}_i =  \dfrac{\phi_i \sig_i}{\psi_{i,2}}\cdot \dfrac{\Phi_2}{1 - \Psi_2}  + \dfrac{\mu_{i2}(b_i-r)}{2\gamma_i\psi_{i,2}}.
\end{align}
	\item[$(2)$] If \(\Psi_2=1\), then
\begin{itemize}
\item[$(a)$] if \(\Upsilon_2+\Phi_2=0\), there are infinitely many equilibrium feedback strategies, which are given by
    \begin{align}\label{eq:picor1.80}
    \widehat{\pi}_i(x_i)=\frac{\mu_{i1}(b_i-r)}{2\gamma_i\psi_{i,2}}x_i+\frac{\phi_i\sigma_i}{\psi_{i,2}}\overline{\sigma\widehat{\pi}}^{(2)}+\frac{\mu_{i2}(b_i-r)}{2\gamma_i\psi_{i,2}},
    ~~~~\mbox{for}~~i=1,2.
    \end{align}
    where \(\overline{\sigma\widehat{\pi}}^{(2)}\) is an arbitrary real number.
 \item[$(b)$] if \(\Upsilon_2+\Phi_2\neq0\), then the equilibrium feedback strategy does not exist.
\end{itemize}
\end{itemize}
\end{corollary}
\begin{proof}
 See Appendix \ref {app:C}.
\end{proof}

	\begin{remark}
		When the risk aversion function is state-independent and the investment time horizon is infinitesimally short, that is, \(\mu_{i1}=0\), \(\mu_{i2}=2\), and \(\lambda\to \infty\). The equilibrium  feedback  strategy is
		\[
		\widehat{\pi}_i=\frac{\phi_i\sigma_i}{\psi_{i,2}} \cdot
		\frac{\Phi_2}{1 -\Psi_2}+\frac{b_i-r}{\gamma_i\psi_{i,2}},
		\]
		which is the same as the equilibrium investment strategy obtained in Theorem 4 in \citet{guan2022time} when there are no insurance terms, \(T-t=0\), and \(n=2\) in their model.
	\end{remark}

	\section{The mean-field game (MFG)}\label{sec:MF}
	In this section, we study the equilibrium feedback strategy for the multi-agent game as the number of agents tends to infinity. In this case,  the \(n\)-agent game converges to a mean-field game with a continuum of agents. In Subsection~\ref{subsec:MF-model}, we introduce the type space, the objective, and the admissible set of the equilibrium strategy. In Subsection~\ref{subsec:MBR}, given the average processes $\mathbb{E}[(b-r)\pi(t)\mid\mathcal{F}_t^B]$ and $\mathbb{E}[\sigma\pi(t)\mid\mathcal{F}_t^B]$, we present the definition of the time-consistent mean-field equilibrium (MFE) feedback strategy for the representative agent and provide a general verification theorem, from which we explicitly find the best response of the representative agent when the hazard rate of the random horizon is a constant. Under the homogeneous setting, in Subsection~\ref{subsec:MFH}, we follow a fixed-point argument and characterize the equilibrium feedback strategy of the whole system. Furthermore, we establish the almost sure convergence of the Nash equilibrium of the homogeneous $n$-agent game to the MFE as $n\to\infty$.

	\subsection{Model formulation}\label{subsec:MF-model}
	We start by defining the type vector and the type space. For the \(n \)-agent game, we denote the parameters of the \(i\)-th agent by the following type vector
	\begin{align}
		\eta_i := (x_{i,0}, b_i, \xi_i, \sigma_i, \phi_i, \gamma_i, \mu_{i1}, \mu_{i2}), \qquad \text{for}~~i=1,\ldots,n.
	\end{align}
	where \(x_{i,0}\) is the wealth at the time \(t=0\).
	We allow these parameters to depend on \(n\), but for simplicity, we omit the index \(n\) to the type vector \(\eta_i\). These type vectors induce an empirical measure \(m_n\), called the type distribution, which is a probability measure on the type space
	\begin{align*}
			\mathcal{Z}^e:=\mathbb{R}\times(0,\infty)\times [0,\infty)^2\times[0,1]\times\mathbb{R}^+\times[0,\infty)^2,
	\end{align*}
	given by
	\begin{align}
		m_n(A) = \frac{1}{n} \sum_{i=1}^{n} {\bf{1}}_A(\eta_i) \quad \text{for Borel sets } A \subset \mathcal{Z}^e. \label{eq:mn}
	\end{align}
	
We assume that \textcolor{blue}{\(\{\eta_i\}_{i=1}^\infty\) are i.i.d random elements in \(\mathcal{Z}^e\),} and then by the law of large numbers, the empirical measure \(m_n\) converges weakly to a limit measure \(m\) on the space \(\mathcal{Z}^e\) in the sense that
	\begin{align}
		\int_{\mathcal{Z}^e} f \, \drm m_n \to \int_{\mathcal{Z}^e} f \, \drm m,  \label{eq:Mweak_converge}
	\end{align}
	for any bounded continuous function \(f\) on \(\mathcal{Z}^e\) when the number of agents tend to infinity, that is \(n\to\infty\). We define \(\eta = (x_0, b, \xi, \sigma, \phi, \gamma, \mu_1, \mu_2)\) as a random variable with this limiting distribution \(m\) on the space \(\mathcal{Z}^e\) and \(\xi+\sigma>0\) almost surely. The distribution of this random type vector reflects the law of type vectors of a continuum of agents, where a single realization of the random type vector represents the type assigned to a single representative agent.
	
	We start to formulate the MFG, and then derive the expression of the equilibrium feedback strategy for the MFG. Let \((\Omega, \mathcal{F},\mathbb{F}=\{\mathcal{F}_t\}_{t\geq 0}, \mathbb{P})\) be a filtered complete probability space satisfying the usual conditions, which supports a two-dimensional Brownian motion \((W, B)\) and a random type vector \(\eta = (x_0, b, \xi, \sigma, \phi, \gamma, \mu_1, \mu_2)\) with \(\xi+\sigma>0\) almost surely. Here, \(\eta\)  is measurable with respect to the filtration \(\mathcal{F}_0\) and is independent of both \(W\) and \(B\). Let \(\mathbb{F}^B = \{\mathcal{F}_t^B\}_{t \geq 0}\) denote the natural filtration generated by the Brownian motion \(B\). Moreover, we assume that all expectations appearing in this section are finite.

	We assume that the financial market is the same as that considered in Subsection \ref{subsec:model} except that the idiosyncratic risk factors for the stocks are replaced by \(W=\{W(t)\}_{t\geq 0}\). Furthermore, we assume that the risk-free interest rate \(r\) is identical for all agents. Let $\pi(t)$ represent the amount of money that the representative agent invests in the risky stock. Then, the dynamic of the representative agent's wealth process $X^{\boldsymbol{\pi}}=\{X^{{\boldsymbol{\pi}}}(t)\}_{t\geq 0}$ under strategy ${\boldsymbol{\pi}} = \{\pi(t)\}_{t \ge 0}$ satisfies the following SDE:
	\begin{align}\label{eq:MMFX}
		\drm X^{\bpi}(t)&= [rX^{\bpi}(t)  +(b-r)\pi(t)]\drm t+\xi\pi(t)\drm W(t)+\sigma\pi(t)\drm B(t),\quad X^{\bpi}(0)=x_0.
	\end{align}

We also assume the time horizon is random. Let \(\tau\) be a random variable defined in the above probability space, which is independent of the Brownian motion \((W,B)\) and the type vector \(\eta\), and we use \(\lambda(t)\)  to denote the deterministic, time-dependent hazard rate function of \(\tau\), with the corresponding survival probability given by
		\begin{align*}
			\mathbb{P}(\tau>t)=\exp \Big(-\int_0^t \lambda(s)\drm s\Big).
		\end{align*}
\begin{bluepar}
In the time-inconsistent problem, let $x$ denote the initial wealth of the representative agent at time $t$. Driven by the SDE \eqref{eq:MMFX}, the agent's wealth process $X^{\bpi}$ depends on both the idiosyncratic noise $W$ and the common noise $B$.	Within this mean-field game (MFG) framework, the population average wealth is defined conditional on the filtration $\mathbb{F}^B$, which is generated by the common noise $B$. (see \citet{lacker2019mean}, page 1015). Thus, the agent anticipates that the average wealth $\bar{X}^{\bpi}$ is an $\mathbb{F}^B$-adapted process, defined as $$\bar{X}^{\bpi}(t)=\mathbb{E}[X^{\bpi}(t)\mid\mathcal{F}_t^B],$$
 for all $t \ge 0$.
 	\end{bluepar}
 	
 To address the investment problem of the representative agent, we define the admissible strategy as follows
\begin{definition}[Admissible strategy]\label{def:mf_admissible}
A strategy \(\bm{\pi} = \{\pi(t)\}_{t\geq 0}\) is called admissible if it satisfies the following conditions:
\begin{itemize}
\item[$(1)$] $\boldsymbol{\pi}$ is an $\mathbb{F}$-progressively measurable real-valued process, and \( \int_0^t |\pi(s)|^2\, \drm s<\infty\) almost surely for all \( t \geq 0 \).
\item[$(2)$] The SDE \eqref{eq:MMFX} has a unique strong solution, such that
$\E_{0, {x_0 }} \big[\left({X}^{\bpi } (\tau)\right)^2\big]<\infty$.
\end{itemize}
We use \( \boldsymbol{\Pi}^m \) to denote the set of the representative agent's all admissible strategies.
\end{definition}

We assume that the representative agent has a mean-variance investment preference with a state-dependent risk aversion. Specifically, the objective of the representative agent is to  maximize
		\begin{align}	\label{eq:MMFMFJ}
			J(t, x, \bar{x}; {\bm{\pi}}) = (\mu_1 x+\mu_2)\mathbb{E}_{t,x,\bar{x},t+}[X^{\bpi}(\tau)-\phi\bar{X}^{\bpi}(\tau)]-\gamma \text{Var}_{t,x,\bar{x},t+}[X^{\bpi}(\tau)-\phi\bar{X}^{\bpi}(\tau)],
		\end{align}
		in which \(\mathbb{E}_{t,x,\bar{x},t+}[\cdot]:=\mathbb{E}[\cdot|{X}^{\bpi}(t)={x}, \bar{X}^{\bpi}(t)=\bar{x}, \tau > t]\).

\subsection{Best responses of the MFG}\label{subsec:MBR}
		In this subsection, we derive the best response of the representative agent when the average processes $\mathbb{E}[(b-r)\pi(t)\mid\mathcal{F}_t^B]$ and $\mathbb{E}[\sigma\pi(t)\mid\mathcal{F}_t^B]$ are given. We first show the definition of the mean-field equilibrium for the representative agent.
		
\begin{definition}[Time-consistent mean-field equilibrium]\label{def:StrongMFE}
For an admissible feedback strategy \(\widehat{\bm{\pi}}=\{\widehat{\pi}(t,X^{\widehat{\bpi}}(t),\bar{X}^{\widehat{\bpi}}(t))\}_{t\geq 0}\), fix a time $t\ge0$, a positive number \(\epsilon > 0\), and a real number $\pi$; then, define a feedback strategy $\bpi^{\eps}$ by
			\[
			\pi^{\eps}(s, x, \bar{x}) =
			\begin{cases}
				\pi, & \text{for } ~~   {t \leq s < (t+\epsilon) \wedge \tau,} \\
				\widehat{\pi}(s, x, \bar{x}), & \text{for }~~ (t+\epsilon) \wedge \tau \leq s \leq \tau.
			\end{cases}
			\]		
			If for all \((t, x, \bar{x}) \in [0,\infty) \times \mathbb{R} \times \mathbb{R}\),
			\begin{align}
				\liminf_{\epsilon \to 0^+} \frac{J(t, x, \bar{x}; \widehat{\bm{\pi}}) - J(t, x, \bar{x}; {\bm{\pi}^{\eps}})}{\epsilon} \geq 0,
			\end{align}
			then $\widehat{\bpi}$ is a time-consistent mean-field equilibrium (MFE) feedback strategy,
			and  the corresponding equilibrium value function is given by
			\[
			V(t, x, \bar{x}) = J(t, x, \bar{x}; \widehat{\bm{\pi}}).
			\]
	
		\end{definition}

\begin{bluepar}	
We proceed to formalize the average wealth process  $\bar{X}^{\bm{\pi}}$. By taking the conditional expectation of \eqref{eq:MMFX} with respect to $\mathcal{F}_t^B$, $\bar{X}^{\bm{\pi}}$ satisfies the following SDE:
	\begin{align}\label{eq:MMFbarX}
		\drm \bar{X}^{\bpi}(t)
		= \bigl[r\bar{X}^{\bpi}(t) + \overline{\underline{br}\pi}(t)\bigr]\drm t
		+ \overline{\sigma\pi}(t)\,\drm B(t), \quad t \geq 0,
	\end{align}
	where
\[
\overline{\underline{br}\pi}(t) := \mathbb{E}[(b-r)\pi(t)\mid\mathcal{F}_t^B]~~\mbox{and}~~\overline{\sigma\pi}(t) := \mathbb{E}[\sigma\pi(t)\mid\mathcal{F}_t^B].
\]
\end{bluepar}

 Both of them are $\mathcal{F}_t^B$-adapted processes.
\begin{remark}
Because the risk aversion in our model is state-dependent, the corresponding MFE strategy inherently relies on the current wealth of the representative agent. Furthermore, since the dynamics of the wealth process given in \eqref{eq:MMFX} are driven by the common noise $B$, the mean-field process itself is $\mathcal{F}_t^B$-adapted. This fundamentally distinguishes our framework from those in \citet{lacker2019mean} and \citet{bo2024mean}, which primarily focus on state-independent equilibria. In contrast, we seek an MFE in the Markovian feedback form $\widehat{\pi}(t, X^{\widehat{\bpi}}(t),\bar{X}^{\widehat{\bpi}}(t))$.
	\end{remark}

We now present a verification theorem to explicitly characterize the MFE. As the proof closely parallels that of Theorem \ref{th:verification}, it is omitted. We subsequently apply this theorem to explicitly characterize the MFE for the scenario when the random horizon has a constant hazard rate.
Moreover, we assume that the $\mathcal{F}_t^B$-adapted average processes $\overline{\sigma\pi}(t)$ and $\overline{\underline{br}\pi}(t)$ are provided. Under this assumption, each agent in the mean-field game will solve its own single optimization problem against these given average inputs, and the best response is characterized via a system of extended HJB equations.
	
For a given function $g \in C^{1,2,2}([0,\infty) \times \mathbb{R} \times \mathbb{R})$, the infinitesimal generator $\mathcal{L}_{\scriptscriptstyle m}^{{\pi}}$ is defined by:
	\begin{align}
		\mathcal{L}_{\scriptscriptstyle m}^{{\pi}} g(t,x,\bar{x}) = & g_t + g_x[rx + (b - r)\pi] + g_{\bar{x}}[r\bar{x} + \overline{\underline{br} \pi}(t)] \\
		& + \frac{1}{2}g_{xx}[\xi^2\pi^2 + \sigma^2 \pi^2] + \frac{1}{2}g_{\bar{x}\bar{x}}\overline{\sigma \pi}(t)^2 + g_{x\bar{x}}\sigma \pi \overline{\sigma \pi}(t).
	\end{align}
	Define a function \(f \in C^{1,2,2,2,2}([0,\infty) \times \mathbb{R}^4)\) by
	\begin{align}
		f(t, x, y, u, v) = (\mu_{1} x + \mu_{2}) u - \gamma (v - u^2),
	\end{align}
	where $u$ and $v$ are two functions depending on $(t,x,\bar{x})$.
	
	\begin{theorem}[Verification theorem]\label{th:Mverification}
		Suppose that there exist three real-valued functions $V^{\scriptscriptstyle m}(t,x,\bar{x})$, $G^{\scriptscriptstyle m}(t,x,\bar{x})$, $H^{\scriptscriptstyle m}(t,x,\bar{x})\in C^{1,2,2}({[0,\infty)}\times\mathbb{R}\times\mathbb{R})$ satisfying the following conditions:
		\begin{itemize}
			\item[$(1)$] For all \( (t,x,\bar{x})\in {[0,\infty)}\times \mathbb{R}\times \mathbb{R}\),
			\begin{align}				
				&\lambda(t)\left\{V^{\scriptscriptstyle m}(t,x,\bar{x})-(\mu_{1}x+\mu_{2})\big(x-\phi\bar{x}\big)+\gamma \left[G^{\scriptscriptstyle m}(t,x,\bar{x})-(x-\phi\bar{x})\right]^2\right\}\\	&=\sup_{{\pi}\in\boldsymbol{\Pi}^{\scriptscriptstyle m}}\left\{\mathcal{L}^{{\pi}}_{\scriptscriptstyle m}V^{\scriptscriptstyle m}(t,x,\bar{x})-\mathcal{L}^{{\pi}}_{\scriptscriptstyle m}f(t,x,\bar{x},G^{\scriptscriptstyle m},H^{\scriptscriptstyle m})+f_G(t,x,\bar{x},G^{\scriptscriptstyle m},H^{\scriptscriptstyle m})\mathcal{L}^{{\pi}}_{\scriptscriptstyle m} G^{\scriptscriptstyle m}(t,x,\bar{x})\right.\\
				&\left.\quad\quad\quad\quad+f_H(t,x,\bar{x},G^{\scriptscriptstyle m},H^{\scriptscriptstyle m})\mathcal{L}^{{\pi}}_{\scriptscriptstyle m}H^{\scriptscriptstyle m}(t,x,\bar{x})\right\}. \label{eq:MMF-HJB}
			\end{align}
			Let $\widehat{\pi}$ denote the function of $(t, x, \bar{x})$ that attains the supremum in \eqref{eq:MMF-HJB}.		
			\item[$(2)$] For all \( (t,x,\bar{x})\in {[0,\infty)}\times \mathbb{R}\times \mathbb{R}\),
			\begin{align}\label{eq:M-LG}
				\lambda(t)\left( G^{\scriptscriptstyle m}(t,x,\bar{x})-(x-\phi \bar{x})\right)=\mathcal{L}^{\widehat{{\pi}}}_{\scriptscriptstyle m}G^{\scriptscriptstyle m}(t,x,\bar{x}).
			\end{align}
			
			\item[$(3)$] For all \( (t,x,\bar{x})\in {[0,\infty)}\times \mathbb{R}\times \mathbb{R}\),
			\begin{align}\label{eq:M-LH}
				\lambda(t)\left[H^{\scriptscriptstyle m}(t,x,\bar{x})-\big(x-\phi \bar{x}\big)^2\right]=\mathcal{L}_{\scriptscriptstyle m}^{{\widehat{\pi}}}H^{\scriptscriptstyle m}(t,x,\bar{x}).
			\end{align}
			
			\item[$(4)$] A transversality condition holds. Specifically, for all \( (t,x,
			\bar{x})\in {[0,\infty)}\times \mathbb{R}\times \mathbb{R}\),
			\begin{align}\label{eq:M-T}
				\lim_{s\to \infty}\mathbb{E}_{t,x,\bar{x}}\Big[e^{-\int_t^s \lambda(v)\drm v} j(s,X^{\bm{\widehat{\pi}}}(s),\bar{X}^{\bm{\widehat{\pi}}}(s))\Big]=0,
			\end{align}
			for \(j=G^{\scriptscriptstyle m}\), \(H^{\scriptscriptstyle m}\), \(V^{\scriptscriptstyle m}\), and \(f\).
		\end{itemize}

		Then
		\begin{align*}
			V^{\scriptscriptstyle m}(t,x,\bar{x})&=	(\mu_1 x+\mu_2)\mathbb{E}_{t,x,\bar{x},t+}[X^{\widehat{\bm{\pi}}}(\tau)-\phi\bar{X}^{\bm{\widehat{\pi}}}(\tau)]-\gamma Var_{t,x,\bar{x},t+}[X^{\widehat{\bm{\pi}}}(\tau)-\phi\bar{X}^{\bm{\widehat{\pi}}}(\tau)],\\
			G^{\scriptscriptstyle m}(t,x,\bar{x})&=\mathbb{E}_{t,x,\bar{x},t+}[X^{\widehat{\bm{\pi}}}(\tau)-\phi \bar{X}^{\bm{\widehat{\pi}}}(\tau)],\\
			H^{\scriptscriptstyle m}(t,x,\bar{x})&=\mathbb{E}_{t,x,\bar{x},t+}\left[\big(X^{\widehat{\bm{\pi}}}(\tau)-\phi \bar{X}^{\bm{\widehat{\pi}}}(\tau)\big)^2\right],
		\end{align*}
		in which $X^{\widehat{\bm{\pi}}}$ is the representative agent's wealth process  under the MFE strategy $\widehat{\bm{\pi}}$. \footnote{Notice that the above extended HJB system is random due to the $\mathcal{F}_0$-measurable type parameters.}
	\end{theorem}
	
	We use Theorem \ref{th:Mverification} to deduce the mean-field equilibrium feedback strategy and the corresponding value function for the representation agent. Furthermore, we assume that the hazard rate \(\lambda(t)\) is a positive constant, that is \(\lambda(t)\equiv \lambda\). Under this assumption, the problem is time homogeneous and the three PDEs given in Theorem \ref{th:Mverification} degenerate to a system of ODEs.
		
	Similar to Subsection \ref{subsec:verif}, we assume that $\lambda(t)$ is a positive constant, that is, $\lambda(t) \equiv \lambda$. Under this assumption, we apply Theorem \ref{th:Mverification} to derive the MFE and the corresponding value function for the MFG.
	Define
	\begin{align}
		Q_{\scriptscriptstyle m}:=\gamma \tilde{a}_{\scriptscriptstyle m}[r-\lambda+\rho_{\scriptscriptstyle m} p_{\scriptscriptstyle m}(b-r)]-a_{\scriptscriptstyle m}\rho_{\scriptscriptstyle m}(b-r)^2[2\gamma(a_{\scriptscriptstyle m}-1)+\mu_{1}]\neq 0,\label{eq:MFQ}
		\end{align}
	and assume that \(Q_{\scriptscriptstyle m} \neq 0\), and
	\begin{align}
		a_{\scriptscriptstyle m}=&\frac{\lambda}{\lambda-r-\rho_{\scriptscriptstyle m} p_{\scriptscriptstyle m}(b-r)},\label{eq:MFa1}\\
		 \tilde{a}_{\scriptscriptstyle m}=&\frac{\lambda }{\lambda-[2r+2\rho_{\scriptscriptstyle m} p_{\scriptscriptstyle m}(b-r)+\frac{1}{2}\rho_{\scriptscriptstyle m} {p_{\scriptscriptstyle m}}^2]}.\label{eq:MFbara}\\
		\rho_{\scriptscriptstyle m}=&\frac{1}{2(\xi^2+\sigma^2)}, \label{eq:MFrho}
	\end{align}
	and	 \(p_{\scriptscriptstyle m}=z_{\scriptscriptstyle m}/[\rho_{\scriptscriptstyle m}(b-r)]\), with \(z_{\scriptscriptstyle m}\) satisfying
	\begin{align}	
		&\big(\gamma-\mu_{1}/2\big){z^3_{\scriptscriptstyle m}}+\big\{\mu_{1}\big[ (\lambda-r)/2-2\rho_{\scriptscriptstyle m}(b-r)^2\big]+\gamma[2\rho_{\scriptscriptstyle m}(b-r)^2-\lambda+2r]\big\}{z^2_{\scriptscriptstyle m}}\\
		&+\{\rho_{\scriptscriptstyle m}(b-r)^2[\mu_{1}(3\lambda-4r)+4r\gamma]+\gamma(\lambda-r)^2\}z_{\scriptscriptstyle m}+\rho_{\scriptscriptstyle m}(b-r)^2[2\gamma r^2-\mu_{1}(\lambda-r)(\lambda-2r)]=0.\\\label{eq:MF-p1}
	\end{align}	
	Define
	\begin{align}
		k_{1,\scriptscriptstyle m}&:={Q^{-1}_{\scriptscriptstyle m}}\rho_{\scriptscriptstyle m}\sigma(r-\lambda)(D_{\scriptscriptstyle m}-\mu_{1}c_{\scriptscriptstyle m}-2\gamma a_{\scriptscriptstyle m}c_{\scriptscriptstyle m}),\label{eq:MFk1}\\	
		k_{2,\scriptscriptstyle m}&:={Q^{-1}_{\scriptscriptstyle m}}\rho_{\scriptscriptstyle m} (b -r)[2\gamma (a_{\scriptscriptstyle m} -1)c_{\scriptscriptstyle m} +\mu_{1}c_{\scriptscriptstyle m} -D_{\scriptscriptstyle m} ],\label{eq:MFk2}\\
		k_{3,\scriptscriptstyle m}&:=-{Q^{-1}_{\scriptscriptstyle m}}\mu_{2}\rho_{\scriptscriptstyle m} \lambda(b -r).\label{eq:MFk3}
	\end{align}
	in which
	\begin{align}
		c_{\scriptscriptstyle m} =&\frac{a_{\scriptscriptstyle m}\rho_{\scriptscriptstyle m} q_{\scriptscriptstyle m}(b-r)-\lambda\phi}{\lambda-r},\label{eq:MFc1}\\
		D_{\scriptscriptstyle m}=&\frac{\mu_1 c_{\scriptscriptstyle m}r-\gamma  \tilde{a}_{\scriptscriptstyle m}\rho_{\scriptscriptstyle m} p_{\scriptscriptstyle m}q_{\scriptscriptstyle m}+2(a_{\scriptscriptstyle m}-1)\gamma \lambda(c_{\scriptscriptstyle m}+\phi)+\lambda\mu_1\phi}{2r-\lambda},\label{eq:MFD1}
	\end{align}
	and \(q_{\scriptscriptstyle m}\) satisfies
	\begin{align}
		&\{\gamma \tilde{a}_{\scriptscriptstyle m}(\lambda-r)[2r-\lambda+(b-r)\rho_{\scriptscriptstyle m} p_{\scriptscriptstyle m}]-[2\gamma\lambda(a_{\scriptscriptstyle m}-1)+\mu_1(\lambda-r)]a_{\scriptscriptstyle m}\rho_{\scriptscriptstyle m}(b-r)^2\}q_{\scriptscriptstyle m}\\
		&=-2\gamma\lambda\phi r(a_{\scriptscriptstyle m}-1)(b-r).\label{eq:MFq2}
	\end{align}

\begin{assumption}\label{assum:mf}
		$\lambda >
		2r+2\rho_{\scriptscriptstyle m} p_{\scriptscriptstyle m}(b-r)+\tfrac{1}{2}\rho_{\scriptscriptstyle m} p_{\scriptscriptstyle m}^2$.
	\end{assumption}
	
\begin{theorem}
Let \(Q_{\scriptscriptstyle m}\neq 0\), and the average processes \(\overline{\underline{br}\widehat{\pi}}(t)\) and  \(\overline{\sigma\widehat{\pi}}(t)\) be given. Then, under Assumption \ref{assum:mf}, if the following candidate investment strategy ${\bpi^o} = \{{\pi^o}\}$ is admissible, that is, ${\bpi^o} \in \boldsymbol{\Pi}^m$,
\begin{align}\label{eq:M-pi_i}
{\pi^o}(x, \bar{x}) = \rho_{\scriptscriptstyle m} p_{\scriptscriptstyle m} x + \rho_{\scriptscriptstyle m} q_{\scriptscriptstyle m} \bar{x} + k_{1,m} \overline{\sigma\widehat{\pi}} + k_{2,m}\overline{\underline{br}\widehat{\pi}}+k_{3,m},
\end{align}
then, the representative agent's equilibrium investment strategy \(\widehat{\bpi}\) equals ${\bpi^o}$,
in which \(\rho_{\scriptscriptstyle m}\) is given in \eqref{eq:MFrho},  \(p_{\scriptscriptstyle m}=z_{\scriptscriptstyle m}/[\rho_{\scriptscriptstyle m}(b-r)]\) with \(z_{\scriptscriptstyle m}\) solving  \eqref{eq:MF-p1}, \(q_{\scriptscriptstyle m}\) is the solution of \eqref{eq:MFq2}, and
			\(k_j\) $($for \(j=1,2,3\)$)$ are given in \eqref{eq:MFk1}, \eqref{eq:MFk2}, and \eqref{eq:MFk3}, respectively.
			Moreover, the corresponding equilibrium value function is given by
			\begin{align}\label{eq:MF-VV}
				V(x,\bar{x})=A_{\scriptscriptstyle m} x^2+C_{\scriptscriptstyle m} \bar{x}^2+D_{\scriptscriptstyle m} x\bar{x}+E_{\scriptscriptstyle m} x+F_{\scriptscriptstyle m} \bar{x}+I_{\scriptscriptstyle m},
			\end{align}
			in which
			\begin{align}
				A_{\scriptscriptstyle m}&=\mu_1 a_{\scriptscriptstyle m}+\gamma {a^2_{\scriptscriptstyle m}}{-\gamma  \tilde{a}_{\scriptscriptstyle m},}\\
				C_{\scriptscriptstyle m}&=\frac{\lambda\gamma(c_{\scriptscriptstyle m}+\phi)-\frac{\rho_{\scriptscriptstyle m}}{2}\gamma \tilde{a}_{\scriptscriptstyle m}{q^2_{\scriptscriptstyle m}}}{2r-\lambda},\\
				E_{\scriptscriptstyle m}&=\frac{\mu_1\alpha_{\scriptscriptstyle m}r-D_{\scriptscriptstyle m}\overline{\underline{br}\widehat{\pi}}-\rho_{\scriptscriptstyle m}\gamma  \tilde{a}_{\scriptscriptstyle m}\varrho_{\scriptscriptstyle m}p_{\scriptscriptstyle m}+2(a_{\scriptscriptstyle m}-1)\gamma\lambda \alpha_{\scriptscriptstyle m}-\mu_2\lambda}{r-\lambda},\\
				F_{\scriptscriptstyle m}&=\frac{2\lambda\gamma \alpha_{\scriptscriptstyle m}(c_{\scriptscriptstyle m}+\phi)+\lambda\mu_2\phi-\rho_{\scriptscriptstyle m}\gamma \tilde{a}_{\scriptscriptstyle m}\varrho_{\scriptscriptstyle m}q_{\scriptscriptstyle m}-2C_{\scriptscriptstyle m}\overline{\underline{br}\widehat{\pi}}}{r-\lambda},\\
				I_{\scriptscriptstyle m}&=\frac{(C_{\scriptscriptstyle m}-\gamma {c^2_{\scriptscriptstyle m}})\overline{\sigma\widehat{\pi}}^2-\lambda \gamma {\alpha^2_{\scriptscriptstyle m}}+{\frac{\rho_{\scriptscriptstyle m}}{2}\gamma \tilde{a}_{\scriptscriptstyle m}\varrho^2_{\scriptscriptstyle m}}+F_{\scriptscriptstyle m}\overline{\underline{br}\widehat{\pi}}}{\lambda},
			\end{align}
			with
			\begin{align}
				\alpha_{\scriptscriptstyle m}&=\la^{-1}\big({c_{\scriptscriptstyle m}\overline{\underline{br}\widehat{\pi}}+a_{\scriptscriptstyle m}\rho_{\scriptscriptstyle m} \varrho_{\scriptscriptstyle m}(b-r)}\big), \label{eq:MFe1}\\
				\varrho_{\scriptscriptstyle m}&={\left[k_1\overline{\sigma\widehat{\pi}}+k_2\overline{\underline{br}\widehat{\pi}}+k_3\right]/\rho_{\scriptscriptstyle m}}.
			\end{align}
	\end{theorem}

	Define
	\begin{align}\label{eq:MFPsi}
		\Psi=\E\Big[\frac{\phi\sigma^2}{\xi^2+\sigma^2}\Big] \le 1,\quad  {\Theta=\E\Big[\frac{\sigma b}{\gamma(\xi^2+\sigma^2)}\Big]}.
	\end{align}

	The following corollary states the MFE and the value function when the risk aversion function is state-independent and the risk-free interest rate equals zero, that is, \blue{\(\mu_{1}=0\)}, \(\mu_{2}=2\), and \(r = 0\).
	\begin{corollary}\label{cor:MF-020}
		Suppose that \(\mu_{1}=0\), \(\mu_{2}=2\), and \(r=0\),
		and then for the mean-variance problem in the mean-field game with a state-independent risk aversion function,
		\begin{itemize}
			\item[$(1)$] if \(\Psi<1\), the MFE is an $\mathcal{F}_0$-measurable random variable, given by
			\begin{align}\label{eq:MF-01pi}
				\widehat{\pi}=\frac{\phi\sigma}{\xi^2+\sigma^2}\cdot \frac{\Theta}{1-\Psi}+\frac{b}{\gamma(\xi^2+\sigma^2)},
			\end{align}		
			where \(\Psi\) and \(\Theta\) are given in \eqref{eq:MFPsi}.
			\item[$(2)$] if  \(\Psi=1\), the MFE does not exist.
		\end{itemize}
	\end{corollary}
	\begin{remark}
		In this remark, we consider the relative performance investment problem for a continuum of agents with CARA preferences. We assume that the financial market is the same as that in Subsection  \ref{subsec:MF-model} except that the interest rate \(r = 0\), and the value function is given by
		\begin{align}
			\sup_{\bpi\in\bm{\Pi}^{\scriptscriptstyle m}}\E\Bigg[-\exp\Bigg(-\frac{1}{\gamma}\Big(X^{\bpi}(\tau)-\phi\bar{X}^{\bpi}(\tau)\Big)\Bigg)\Bigg],
		\end{align}
		in which the definition of the admissible set $\bm{\Pi}^m$ is the same as that in Subsection \ref{subsec:MF-model}.
		Then, one can show that the equilibrium investment strategy for this problem is also given by \eqref{eq:MF-01pi}.
		Thus, as the $n$-agent game scenario, we also obtain that when the risk-free interest rate $r=0$, the equilibrium investment strategy under CARA preferences is the same as that of the mean-variance problem with a state-independent risk-aversion within MFG framework.
	\end{remark}

\subsection{Homogeneous mean-field game}\label{subsec:MFH}
In this subsection, we restrict attention to the homogeneous population, that is, all agents share an identical
type vector, i.e., \(\eta_i \equiv \eta_0\) for any \(i = 1, \ldots, n\). The empirical measure \(m_n\) associated with the type vectors \(\{\eta_i\}_{i=1}^n\) is defined in \eqref{eq:mn}. Because all agents are identical, $m_n$ converges weakly to the Dirac measure \(\delta_{\eta_0}\) as \(n \to \infty\). Thus, in the homogeneous mean-field game, the representative agent is characterized by a constant vector $\eta_0 = (x, b, \xi, \sigma, \phi, \gamma, \mu_1, \mu_2) \in \mathcal{Z}^e$. We show the equilibrium investment strategy of the homogeneous mean-field game in the following theorem. Define
\begin{align}
\delta_{\scriptscriptstyle m} =  \sigma k_{1,m}+(b-r)k_{2,m}.
\end{align}
\begin{theorem}\label{th:homo_mefield}
Assume that \(Q_{\scriptscriptstyle m}\neq 0\) and $1-\delta_{\scriptscriptstyle m}\neq 0$, then
the equilibrium investment strategy of the homogeneous mean-field game is given as follows:
\begin{align}\label{eq:M-Hpifinal}
\widehat{\pi}(x,\bar{x}) =\rho_{\scriptscriptstyle m} p_{\scriptscriptstyle m} x+\frac{\rho_{\scriptscriptstyle m}q_{\scriptscriptstyle m}+\delta_{\scriptscriptstyle m}\rho_{\scriptscriptstyle m} p_{\scriptscriptstyle m}}{1-\delta_{\scriptscriptstyle m}}\bar{x}+\frac{k_{3,m}}{1-\delta_{\scriptscriptstyle m}},
\end{align}
in which \(\rho_{\scriptscriptstyle m}\) is given in \eqref{eq:MFrho},  \(p_{\scriptscriptstyle m}=z_{\scriptscriptstyle m}/[\rho_{\scriptscriptstyle m}(b-r)]\) with \(z_{\scriptscriptstyle m}\) solving  \eqref{eq:MF-p1}, \(q_{\scriptscriptstyle m}\) is the solution of \eqref{eq:MFq2}, and
\(k_j\) $($for \(j=1,2,3\)$)$ are given in \eqref{eq:MFk1}, \eqref{eq:MFk2}, and \eqref{eq:MFk3}, respectively.
\end{theorem}
\begin{proof}
See Appendix \ref{app:homo_mf}.
\end{proof}
We close this section with the following proposition showing that the Nash equilibrium of the homogeneous \(n\)-agent game converges to the MFE of the mean-field game as the number of agents goes to infinity.

\begin{proposition}\label{th:hre}
Let the \(n\)-agent game system be homogeneous, so that \(\eta_i = \eta_0\) for all
\(i = 1, \ldots, n\), where \(\eta_0 \in \mathcal{Z}^e\) is the common type vector in the \(n\)-agent game.  In the homogeneous mean-field game, $\eta_0$ is identified as the
representative agent's type vector $\eta$.  We assume that  the parameters are bounded in the type space. If $n + \delta \neq 0$, $(1 - \delta)n + \delta \neq 0$ and \(1-\delta_{\scriptscriptstyle m}\neq 0\), then we have
		\begin{align}
			\widehat{\pi}_i\to\widehat{\pi},~~{\rm{ a.s.}} ~~\text{as}~~ n\to\infty. \label{eq:limit_pi}
		\end{align}
	\end{proposition}
\begin{proof}
	See Appendix \ref{app:hre}.
\end{proof}	
	
\begin{remark}
The convergence result in Proposition~\ref{th:hre} is established for the homogeneous case, in which all agents share the identical type vector \(\eta_0\). In a heterogeneous setting, however, the components of the initial type vector \(\eta\) are drawn from a non-degenerate distribution rather than being uniform constants. Thus, obtaining an explicit, closed-form solution for the mean-field game becomes highly intractable.
		
Furthermore, the best response strategies derived for both the \(n\)-agent game and the limiting mean-field game rely heavily on the assumption that the average interaction terms are given exogenously. This essentially requires evaluating the convergence of one approximate equilibrium against another. Because the explicit dynamics of these average terms remain elusive in the heterogeneous case, providing a precise quantification of the approximation error or establishing a rigorous convergence rate is exceedingly difficult. We leave the formal proof of convergence under the fully heterogeneous framework to future research.
	\end{remark}


\section{Numerical Analysis}\label{sec:NA}	
In this section, we provide several numerical examples to illustrate the impacts of competitive components on the equilibrium feedback strategy for the homogeneous \(n\)-agent game. We first detail our simulation methodology, including the parameter calibration and the equilibrium selection criterion. Subsequently, we conduct a sensitivity analysis with respect to key model parameters  \(\phi\), \(\gamma\), \(\mu_1\) and \(\mu_2\).  Finally, we present the convergence analysis of the finite-player equilibrium strategy as the number of agents \(n\) tends to infinity.

\begin{bluepar}	
\subsection{Numerical Simulation and Equilibrium Selection}\label{sub:NS}

In this subsection, we present the numerical implementation for the homogeneous \(n\)-agent game. We consider a homogeneous system of $n = 1000$ agents. Under the homogeneous assumption, all agents share an identical type vector $( b, \xi, \sigma, \phi, \gamma, \mu_1, \mu_2)$, which is drawn from the uniform distribution \(\mathcal{U}\big([0.1,0.2]\times[0.01,0.05]^2\times[0,1]\times[1,10]\times[0,1]^2\big)\). The initial wealth of each agent is drawn independently from $\mathcal{U}[100,1000]$. Additionally, we set the risk-free interest rate $r=0.02$.

 We focus on the $500$-th agent for our subsequent analysis, whose initial wealth $x_{500} = 919.8344$  is a realization drawn from \(\mathcal{U}[100,1000]\). For this selected agent, the parameters are fixed at $(b, \xi, \sigma, \phi,\gamma, \mu_1, \mu_2) = (0.1375, 0.0480, 0.0393, 0.1465, 6.3879, 0.1560, 0.1560)$. The hazard rate $\lambda$ is fixed throughout the simulation
and calibrated to satisfy Assumption~\ref{assum:1a}. Since $p_i$ is determined by the cubic equation \eqref{eq:p1}, whose coefficients depend explicitly on $\lambda$, the admissibility condition in  Assumption~\ref{assum:1a} imposes a joint constraint on the pair $(\lambda, p)$. We search over a uniform grid $\lambda \in [0.3, 20]$ with step size $0.00985$, yielding approximately $2000$ candidate values. For each candidate
$\lambda$, we first solve \eqref{eq:p1} to obtain all real roots, retain those satisfying the condition \eqref{eq:conditionP}, and select the one maximizing $V^{\mathrm{loc}}(x,y_{\smn i}) \approx A x_i^2 + Cy_{\smn i}^2$, where $A$ and $C$ are given in \eqref{eq:A} and \eqref{eq:C1}. We then verify whether the selected \(p\) satisfies Assumption~\ref{assum:1a}. Among all admissible $\lambda$, we adopt the smallest one, that is, $\lambda=0.3$, which is fixed in the subsequent analysis.

Given the realized wealth $x_{500}$ drawn from $\mathcal{U}[0.1,1]$ and the empirical mean of the remaining agents
$y_{\smn 500}=\frac{1}{1000}\sum_{j=1,j\neq 500}^{1000}x_j$, we evaluate the equilibrium strategy $\widehat{\pi}_{500}(x_{500}, y_{\smn 500})$ for each parameter of interest $(\phi, \gamma, \mu_1, \mu_2)$ with all others fixed at baseline.  At each grid point, the coefficients \(p\), \(q\), \(Q\), and \(k_j\) (for \(j=1,2,3\)) are fully recomputed,  and both Assumption \ref{assum:1a} and the non-degeneracy conditions $Q \neq 0$, $n+ \delta \neq 0$, and \((1-\delta)n+\delta\neq 0\) of Theorem~\ref{th:main}  are  re-verified.

For the sensitivity analysis, we vary each parameter of $(\phi, \gamma, \mu_1, \mu_2)$ over its respective range while holding the others fixed at their baseline values. For each parameter variation, we repeat the simulation and the above verification processes.  Any configuration failing the admissibility conditions is discarded. Finally, to evaluate the impact of the mean-field interaction, we compare the equilibrium strategy $\widehat{\pi}_{500}$ under competition ($\phi = 0.1465$) against the non-competitive optimal strategy $\tilde{\pi}_{500}$ under no competition ($\phi \equiv 0$).
	
\end{bluepar}	
	
	\subsection{Sensitivity Analysis}\label{subsec:SA}
	In this subsection, we investigate the sensitivity of the \(500\)-th agent's equilibrium strategy $\widehat{\pi}_{500}$ with respect to the key parameters $\phi$, $\gamma$, $\mu_{1}$, and $\mu_{2}$, with all remaining parameters fixed at the baseline values presented in Subsection~\ref{sub:NS}.
	
	\blue{In Figures \ref{fig:phi} through \ref{fig:mu2}, we plot graphs for the equilibrium investment strategies with and without competition. The solid blue line represents the equilibrium strategy $\widehat{\pi}_{500}$ in the presence of competition. Conversely, the dashed red line depicts the optimal strategy $\tilde{\pi}_{500}$ in the scenario  without competition ($\phi \equiv 0$). By definition, this non-competitive optimal strategy is equivalent to the equilibrium strategy evaluated at $\phi=0$, which is consistent with the analytical solution derived in Corollary \ref{cor:phi0}.}

	\begin{figure}[htbp]
		\centering 
		\begin{minipage}[t]{0.45\textwidth}
			\centering
		\includegraphics[width=\textwidth]{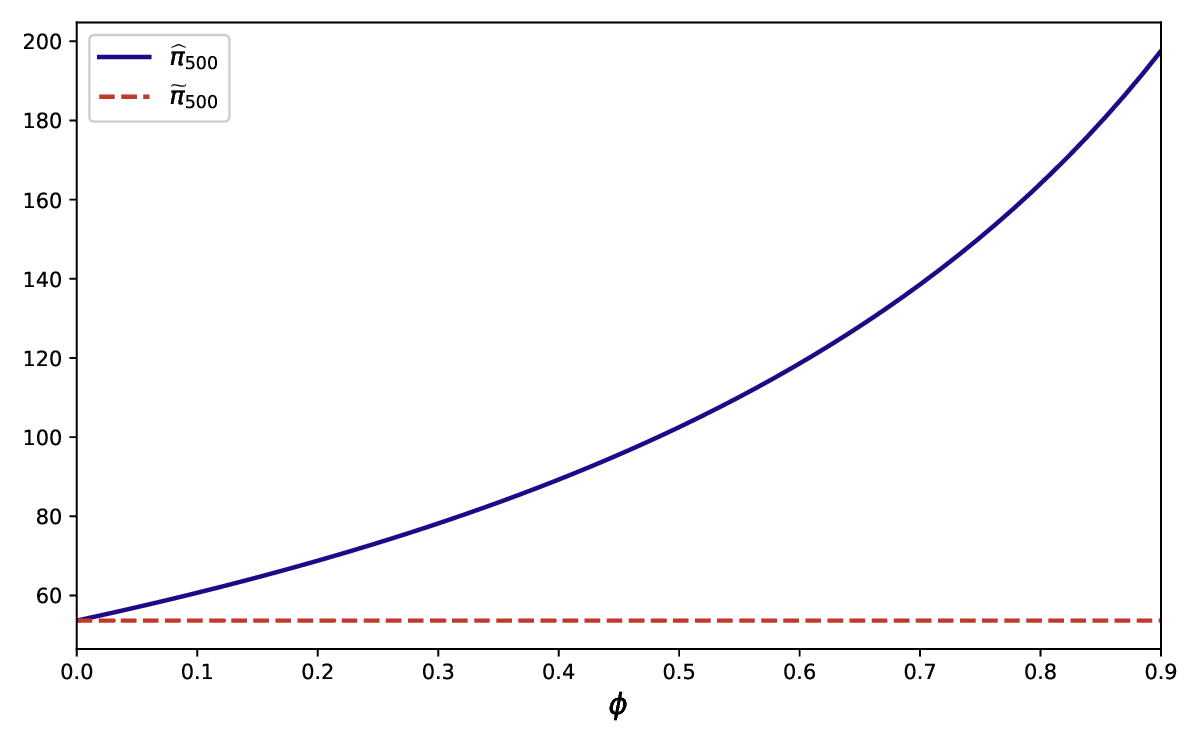}  
		\caption{\small{$\widehat{\pi}_{500}$ and $\tilde{\pi}_{500}$ vary with $\phi$.}}
		\label{fig:phi}
		\end{minipage}
		\hfill 
		\begin{minipage}[t]{0.45\textwidth}
			\centering
		\includegraphics[width=\textwidth]{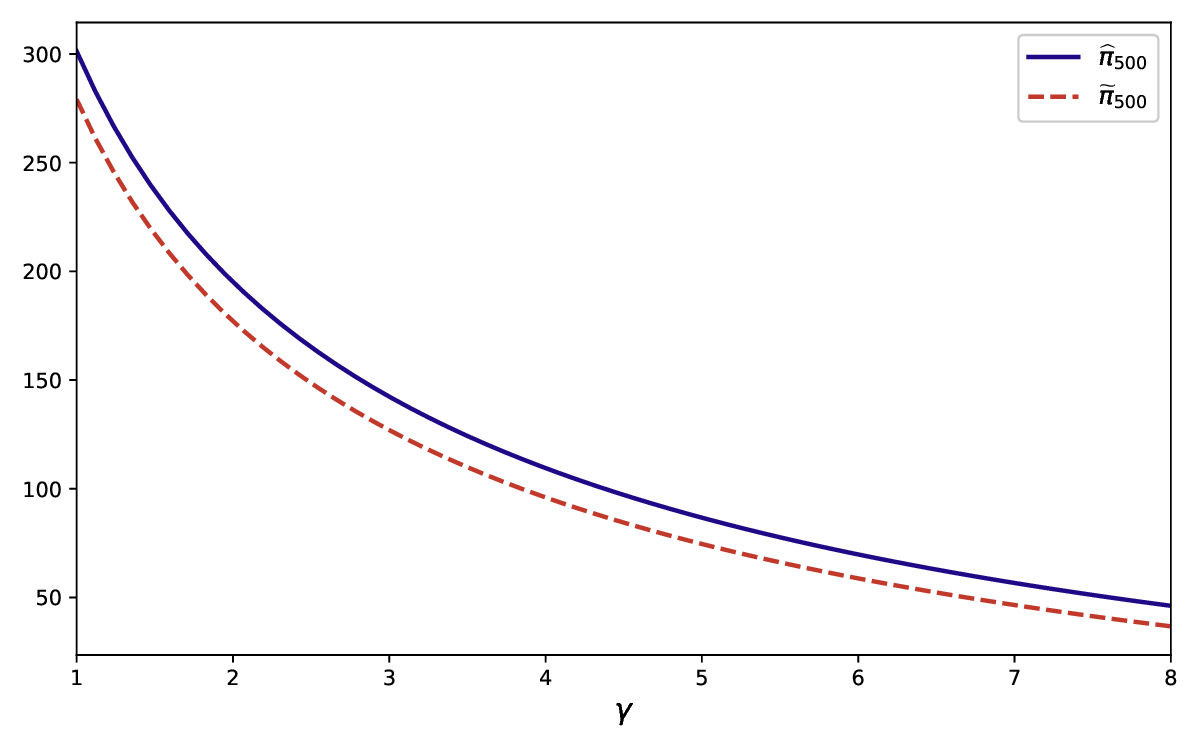}  
		\caption{\small{$\widehat{\pi}_{500}$ and $\tilde{\pi}_{500}$ vary with $\gamma$.}}
		\label{fig:gamma}
		\end{minipage}
	\end{figure}
	
	Figure \ref{fig:phi} illustrates the impact of the competition parameter $\phi $ on the equilibrium strategy \(\widehat{\pi}_{500}\).  The figure shows a clear monotonic relationship:   $\widehat{\pi}_{500}$ increases with $\phi$. Economically, stronger competitive pressure
	heightens each agent's concern for relative performance, driving the agent to take aggressive positions  in risky assets to maximize his/her objective. By contrast, in the absence of competition ($\phi= 0$), the optimal strategy $\tilde{\pi}_{500}$ remains constant, as the agent behaves purely according to his/her own risk-return trade-off without strategic considerations. This observation is consistent with  the results documented in \citet{lacker2019mean} and \citet{bo2024mean}.

	Figure \ref{fig:gamma} illustrates the impact of the risk aversion parameter $\gamma$ on the equilibrium strategy \(\widehat{\pi}_{500}\) and the non-competitive optimal strategy \(\tilde{\pi}_{500}\).  As shown in the figure, both  strategies exhibit a strictly decreasing and convex trend with respect to $\gamma$. Intuitively, as the agent becomes more risk-averse, he/she naturally reduces risky asset exposure to lower the variance of his/her wealth. The convexity further reveals a diminishing marginal sensitivity: a slight increase in risk aversion triggers a massive portfolio rebalancing for an aggressive investor, whereas a highly conservative investor's strategy is inherently bounded and thus less responsive to further increases in $\gamma$. Crucially, the competitive strategy $\widehat{\pi}_{500}$ consistently lies above the non-competitive optimal strategy $\tilde{\pi}_{500}$. This gap implies that competition induces an additional incentive for risk-taking. Specifically, the fear of falling behind peers partially offsets the agent's absolute risk aversion, compelling even highly conservative investors to maintain larger risky positions than they would in isolation.

	\begin{figure}[htbp]
		\centering 
		\begin{minipage}[t]{0.45\textwidth}
			\centering
		\includegraphics[width=\textwidth]{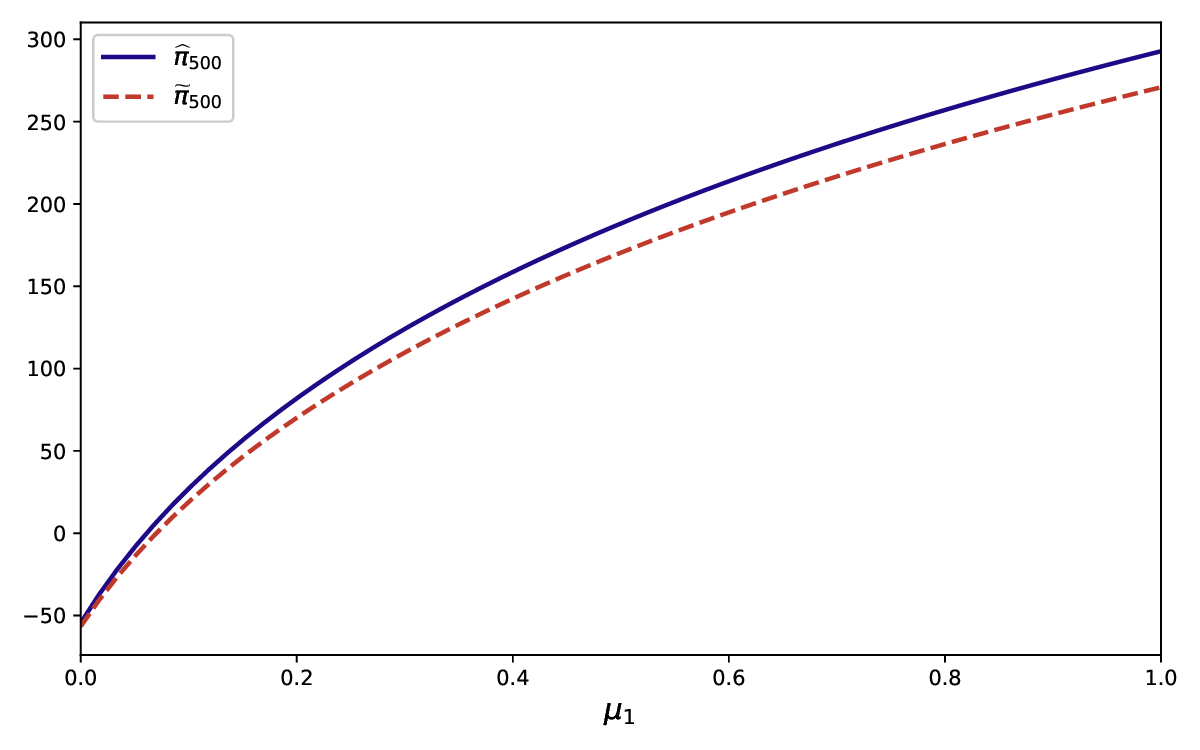}  
		\caption{\small{$\widehat{\pi}_{500}$ and $\tilde{\pi}_{500}$ vary with $\mu_1$.}}
		\label{fig:mu1}
		\end{minipage}
		\hfill 
		\begin{minipage}[t]{0.45\textwidth}
			\centering
		\includegraphics[width=\textwidth]{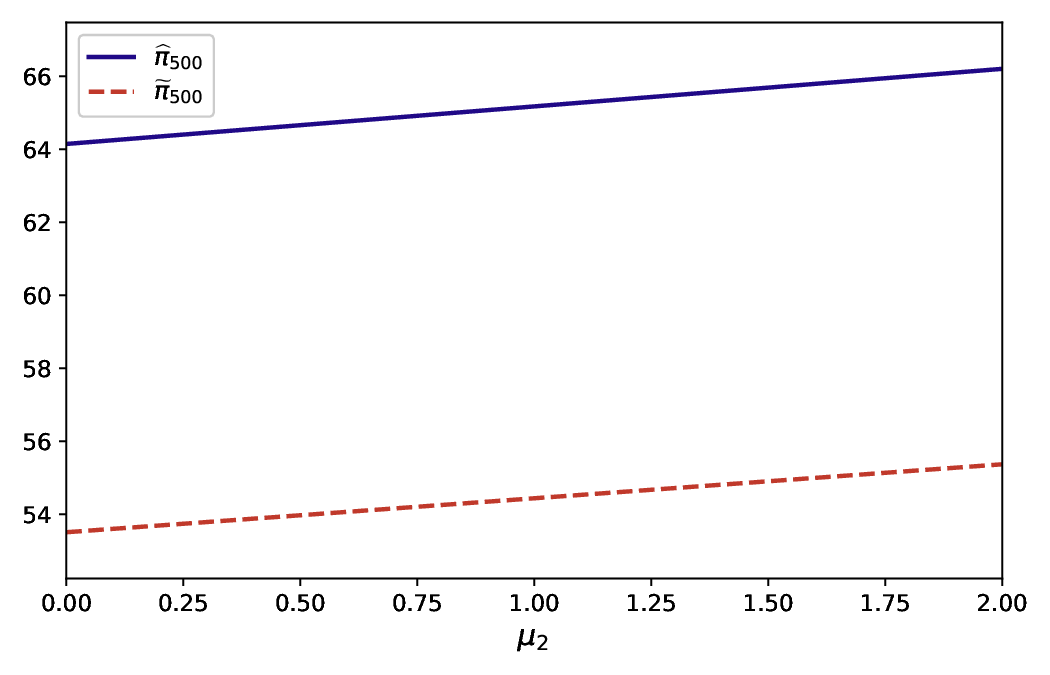}  
		\caption{\small{$\widehat{\pi}_{500}$ and $\tilde{\pi}_{500}$ vary with $\mu_2$.}}
		\label{fig:mu2}
		\end{minipage}
	\end{figure}
	
Figures \ref{fig:mu1} and \ref{fig:mu2} illustrate the influence of $\mu_{1}$ and \(\mu_2\) on the equilibrium strategy \(\widehat{\pi}_{500}\) and the non-competitive optimal strategy \(\tilde{\pi}_{500}\), respectively. In these figures, we observe that \(\widehat{\pi}_{500}\) and \(\tilde{\pi}_{500}\) increase as $\mu_1$ and $\mu_2$ increase. Since $\mu_1$ and $\mu_2$ influence the value function through the term $\mu_1 x_i + \mu_2$, higher values of either parameter strengthens the incentive to maximize expected terminal wealth, thereby encouraging more aggressive investment in the risky asset. Moreover, \(\widehat{\pi}_{500}\) and \(\tilde{\pi}_{500}\) are more sensitive to changes in $\mu_1$ than those in $\mu_2$. Specifically, in Figure \ref{fig:mu1}, the equilibrium investment strategies increase concavely with $\mu_1$, whereas in Figure \ref{fig:mu2}, they increase linearly with $\mu_2$. Furthermore, in both figures, the competitive strategy $\widehat{\pi}_{500}$ consistently lies above the non-competitive optimal strategy  $\tilde{\pi}_{500}$.  The persistent gap between the two curves confirms that competition induces additional risk-taking beyond the non-competitive baseline across all values of both $\mu_1$ and $\mu_2$.
	\begin{bluepar}
	\subsection{Convergence Analysis}\label{subsec:CA}
	In this subsection, we examine the convergence of the time-consistent equilibrium strategy \(\widehat{\pi}^{(n)}\) in the \(n\)-agent game toward the mean-field equilibrium (i.e. MFE) \(\widehat{\pi}_m\) of the mean-field game as the number of agents \(n\) tends to infinity.
	
	\begin{figure}[h!]
		\centering
		\includegraphics[width=0.5\textwidth]{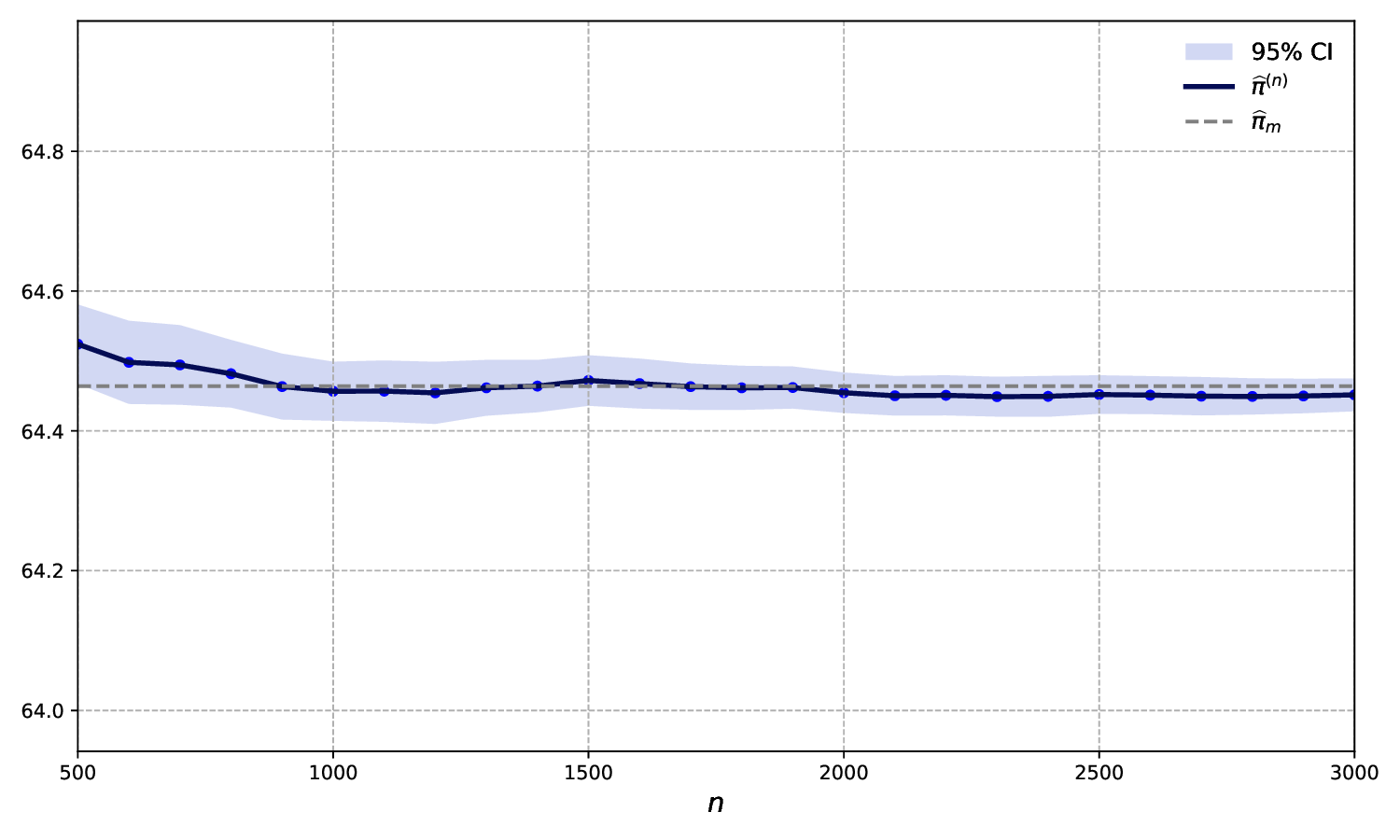}
		\caption{\small{The convergence of \(\widehat{\pi}^{(n)}\) to \(\widehat{\pi}_{\scriptscriptstyle m}\) as $n \to \infty$}}
		\label{fig:N}
	\end{figure}
Figure \ref{fig:N} illustrates the convergence of the time-consistent equilibrium strategy $\widehat{\pi}^{(n)}$ toward the MFE $\widehat{\pi}_m$ as $n$ increases, with all parameters fixed at the baseline values specified in Subsection \ref{sub:NS}. The solid blue curve represents $\widehat{\pi}^{(n)}$ computed for each $n$, while the horizontal dashed line indicates the mean-field limit $\widehat{\pi}_m$. This limit is obtained by taking $n \to \infty$ in the equilibrium system and solving the corresponding mean-field game as derived in Theorem \ref{th:homo_mefield}. As $n$ grows, $\widehat{\pi}^{(n)}$ converges to $\widehat{\pi}_m$, a result that aligns with Proposition \ref{th:hre}. This asymptotic behavior confirms that the mean-field equilibrium provides an accurate approximation to the finite-player game for sufficiently large populations.
	
\end{bluepar}

	\bibliographystyle{apalike}
	\bibliography{refs}
	
	\appendix
	
\section{Proof of Theorem \ref{th:verification}} \label{app:A}

	\setcounter{equation}{0}
	\renewcommand{\theequation}{A.\arabic{equation}}
	In this appendix, we will provide the proof of Theorem \ref{th:verification}.
	\begin{proof}
		Let \(\widehat{\pi}_i\) be the function that attains the supremum in \eqref{eq:EHJB}, and $\widehat{\bpi}_i$ be the corresponding feedback strategy. Suppose that \(\bm{\widehat{\pi}}_i\),
		\(G_i\) and \(H_i\) satisfy \eqref{eq:EHJB},  \eqref{eq:LG}, \eqref{eq:LH}, and \eqref{eq:T}. Fixed a time \(t\), we define a sequence of stopping times \(\{\tau_n\}_{n \ge 1}\) as follows
		\begin{align}
			\tau_n=\left(\inf\left\{s\geq 0:\int_0^s \big(\widehat{\pi}_i(v,X_i^{\widehat{\bm{\pi}}_i}(v),Y^{\widehat{\bpi}_{\smn i}}_{\smn i}(v))\big)^2 \drm v \geq n\right\}\right) \vee t.
		\end{align}
{\bf{Step 1.}} First, we prove that
\[	
G_i(t,x_i,y_{\smn i})=\mathbb{E}_{t,x_i,y_{\smn i},t+}\left[(1-\phi_i/n)X_i^{\widehat{\bm{\pi}}_i}(\tau)-\phi_i Y^{\widehat{\bpi}_{\smn i}}_{\smn i}(\tau)\right].
 \]
 Let \(k>t\) be a fixed number. By applying It\^{o}'s formula to \(\e^{-\int_t^{\tau_n\wedge k}\lambda(s)\drm s} G_i(\tau_n\wedge k,X_i^{\widehat{\bm{\pi}}_i}(\tau_n\wedge k),Y^{\widehat{\bpi}_{\smn i}}_{\smn i}(\tau_n\wedge k))\), we have
		\begin{align*}
			&\e^{-\int_t^{\tau_n\wedge k}\lambda(s)\drm s} G_i(\tau_n\wedge k,X_i^{\widehat{\bm{\pi}}_i}(\tau_n\wedge k),Y^{\widehat{\bpi}_{\smn i}}_{\smn i}(\tau_n\wedge k))\\
			&=G_i(t,x_i,y_{\smn i})+\int_t^{\tau_n\wedge k} \e^{-\int_t^s \lambda(v)\drm v}\Big(\mathcal{L}_i^{\widehat{\pi}}G_i(s,X_i^{\widehat{\bm{\pi}}_i}(s),Y^{\widehat{\bpi}_{\smn i}}_{\smn i}(s))-\lambda(s)G_i(s,X_i^{\widehat{\bm{\pi}}_i}(s),Y^{\widehat{\bpi}_{\smn i}}_{\smn i}(s))\Big)\drm s\\
			&\quad +\int_t^{\tau_n\wedge k}(\cdots)\big( \drm W_i(s) + \drm B(s)\big).
		\end{align*}
By taking conditional expectation \(\mathbb{E}_{t, x_i, y_{\smn i}}[\cdot]\) on both sides of the above equation, we obtain
		\begin{align}
			&	\mathbb{E}_{t,x_i,y_{\smn i}}\Big[\e^{-\int_t^{\tau_n\wedge k}\lambda(s)\drm s} G_i\big(\tau_n\wedge k,X_i^{\widehat{\bm{\pi}}_i}(\tau_n \wedge k),Y^{\widehat{\bpi}_{\smn i}}_{\smn i}(\tau_n \wedge k)\big)\Big]\\
			&=G_i(t,x_i,y_{\smn i})+\mathbb{E}_{t,x_i,y_{\smn i}}\bigg[\int_t^{\tau_n\wedge k} \e^{-\int_t^s \lambda(v)\drm v}\left(\mathcal{L}_i^{\widehat{\pi}}G_i\big(s,X_i^{\widehat{\bm{\pi}}_i}(s),Y^{\widehat{\bpi}_{\smn i}}_{\smn i}(s)\big)-\lambda(s)G_i\big(s,X_i^{\widehat{\bm{\pi}}_i}(s),Y^{\widehat{\bpi}_{\smn i}}_{\smn i}(s)\big)\right)ds\bigg]\\
			&=G_i(t,x_i,y_{\smn i})-\mathbb{E}_{t,x_i,y_{\smn i}}\left[\int_t^{\tau_n\wedge k} \lambda(s)\e^{-\int_t^s \lambda(v)\drm v}\left((1-\phi_i/n)X^{\widehat{\bm{\pi}}_i}_i(s)-\phi_iY^{\widehat{\bpi}_{\smn i}}_{\smn i}(s)\right)\drm s\right],
		\end{align}
in which the second equality follows from \eqref{eq:LG}. \blue{Since $\widehat{\bm{\pi}}_i$ and $\widehat{\bm{\pi}}_j, j\neq i$ are all admissible strategies, we know that
\begin{align}
& \mathbb{E}_{t,x_i,y_{\smn i}}\left[\int_t^{\tau_n\wedge k} \lambda(s)\e^{-\int_t^s \lambda(v)\drm v}\left((1-\phi_i/n)X^{\widehat{\bm{\pi}}_i}_i(s)-\phi_iY^{\widehat{\bpi}_{\smn i}}_{\smn i}(s)\right)\drm s\right] \\
&\le \mathbb{E}_{t,x_i,y_{\smn i}}\left[(1-\phi_i/n)X^{\widehat{\bm{\pi}}_i}_i(\tau)-\phi_iY^{\widehat{\bpi}_{\smn i}}_{\smn i}(\tau)\right] < \infty.
\end{align}
Thus, by applying the dominated convergence theorem and letting \(n\to\infty\),} it follows that
\begin{align}
	&	\mathbb{E}_{t,x_i,y_{\smn i}}\Big[\e^{-\int_t^{k}\lambda(s)\drm s} G_i\big(\tau_n\wedge k,X_i^{\widehat{\bm{\pi}}_i}(\tau_n\wedge k),Y^{\widehat{\bpi}_{\smn i}}_{\smn i}(\tau_n\wedge k)\big)\Big]\\
	&=G_i(t,x_i,y_{\smn i})-\mathbb{E}_{t,x_i,y_{\smn i}}\left[\int_t^{ k} \lambda(s)\e^{-\int_t^s \lambda(v)\drm v}\big((1-\phi_i/n)X^{\widehat{\bm{\pi}}_i}_i(s)-\phi_iY^{\widehat{\bpi}_{\smn i}}_{\smn i}(s)\big)\drm s\right].
\end{align}
Furthermore,  by letting  \(k \to \infty\) and applying the transversality condition given in \eqref{eq:T}, we get
		\begin{align}
			G_i(t,x_i,y_{\smn i})=\mathbb{E}_{t,x_i,y_{\smn i},t+}\left[(1-\phi_i/n)X_i^{\widehat{\bm{\pi}}_i}(\tau)-\phi_i Y^{\widehat{\bpi}_{\smn i}}_{\smn i}(\tau)\right].
		\end{align}	
{\bf{Step 2.}} In this step, we prove that
	\[
	H_i(t,x_i,y_{\smn i})=\mathbb{E}_{t,x_i,y_{\smn i},t+}\left[\left((1-\phi_i/n)X_i^{\widehat{\bm{\pi}}_i}(\tau)
	-\phi_i Y^{\widehat{\bpi}_{\smn i}}_{\smn i}(\tau)\right)^2\right].
	\]
	Let \(k>t\) be a fixed number. By applying It\^{o}'s formula to the process \(\e^{-\int_t^{\tau_n\wedge k}\lambda(s)\drm s} H_i(\tau_n\wedge k,X_i^{\widehat{\bm{\pi}}_i}(\tau_n\wedge k),Y^{\widehat{\bpi}_{\smn i}}_{\smn i}(\tau_n\wedge k))\), we obtain
	\begin{align*}
		&\e^{-\int_t^{\tau_n\wedge k}\lambda(s)\drm s} H_i(\tau_n\wedge k,X_i^{\widehat{\bm{\pi}}_i}(\tau_n\wedge k),Y^{\widehat{\bpi}_{\smn i}}_{\smn i}(\tau_n\wedge k))\\
		&=H_i(t,x_i,y_{\smn i})+\int_t^{\tau_n\wedge k} \e^{-\int_t^s \lambda(v)\drm v}\Big(\mathcal{L}_i^{\widehat{\pi}}H_i(s,X_i^{\widehat{\bm{\pi}}_i}(s),Y^{\widehat{\bpi}_{\smn i}}_{\smn i}(s))-\lambda(s)H_i(s,X_i^{\widehat{\bm{\pi}}_i}(s),Y^{\widehat{\bpi}_{\smn i}}_{\smn i}(s))\Big)\drm s\\
		&\quad +\int_t^{\tau_n\wedge k}(\cdots)\big( \drm W_i(s) + \drm B(s)\big).
	\end{align*}
	Taking the conditional expectation \(\mathbb{E}_{t, x_i, y_{\smn i}}[\cdot]\) on both sides of the above equation yields
	\begin{align}
		&	\mathbb{E}_{t,x_i,y_{\smn i}}\Big[\e^{-\int_t^{\tau_n\wedge k}\lambda(s)\drm s} H_i\big(\tau_n\wedge k,X_i^{\widehat{\bm{\pi}}_i}(\tau_n \wedge k),Y^{\widehat{\bpi}_{\smn i}}_{\smn i}(\tau_n \wedge k)\big)\Big]\\
		&=H_i(t,x_i,y_{\smn i})+\mathbb{E}_{t,x_i,y_{\smn i}}\bigg[\int_t^{\tau_n\wedge k} \e^{-\int_t^s \lambda(v)\drm v}\left(\mathcal{L}_i^{\widehat{\pi}}H_i\big(s,\cdot\big)-\lambda(s)H_i\big(s,\cdot\big)\right)\drm s\bigg]\\
		&=H_i(t,x_i,y_{\smn i})-\mathbb{E}_{t,x_i,y_{\smn i}}\left[\int_t^{\tau_n\wedge k} \lambda(s)\e^{-\int_t^s \lambda(v)\drm v}\big((1-\phi_i/n)X^{\widehat{\bm{\pi}}_i}_i(s)-\phi_iY^{\widehat{\bpi}_{\smn i}}_{\smn i}(s)\big)^2\drm s\right],
	\end{align}
	where the second equality follows from \eqref{eq:LH}.
\blue{As to step 1, since $\widehat{\bm{\pi}}_i$ and $\widehat{\bm{\pi}}_j, j\neq i$ are admissible strategies, we use the dominated convergence theorem, letting \(n\to\infty\), and then, letting \(k \to \infty\) and applying the transversality condition given in \eqref{eq:T}, we obtain}
	\begin{align}
		H_i(t,x_i,y_{\smn i})=\mathbb{E}_{t,x_i,y_{\smn i},t+}\left[\left((1-\phi_i/n)X_i^{\widehat{\bm{\pi}}_i}(\tau)-\phi_i Y^{\widehat{\bpi}_{\smn i}}_{\smn i}(\tau)\right)^2\right].
	\end{align}

 \noindent {\bf{Step 3.}}  Next, we show that the value function \(V_i(t,x_i,y_{\smn i})\) equals \(J_i(t,x_i,y_{\smn i};\widehat{\bm{\pi}}_1,\ldots,\widehat{\bm{\pi}}_i,\ldots,\widehat{\bm{\pi}}_n)\). From \eqref{eq:LG} and \eqref{eq:LH}, we rewrite \eqref{eq:EHJB} as
		\begin{align}\label{eq:EHJB1}
			&	\lambda(t)\left\{V_i(t,x_i,y_{\smn i})-(\mu_{i1}x_i+\mu_{i2})\left((1-\phi_i/n)x_i-\phi_i y_{\smn i}\right)+\gamma_i \left[G_i(t,x_i,y_{\smn i})-\left((1-\phi_i/n)x_i-\phi_iy_{\smn i}\right)\right]^2\right\}\nonumber\\
			&=\mathcal{L}_i^{\widehat{\pi}}V_i(t,x_i,y_{\smn i})-\mathcal{L}_i^{\widehat{\pi}}f(t,x_i,y_{\smn i},G_i,H_i)+f_G(t,x_i,y_{\smn i},G_i,H_i)\mathcal{L}_i^{\widehat{\pi}} G_i(t,x_i,y_{\smn i})\nonumber\\
			&\quad\quad+f_H(t,x_i,y_{\smn i},G_i,H_i)\mathcal{L}_i^{\widehat{\pi}}H_i(t,x_i,y_{\smn i}).
		\end{align}

		By the definition of \(f\) given in \eqref{eq:f}, we obtain
		\[
		f_G(t, x_i, y_{\smn i}, G_i, H_i) = \mu_{i1}x+\mu_{i2} + 2\gamma_i G_i(t, x_i, y_{\smn i})
		\]
		and
		\[
		f_H(t, x_i, y_{\smn i}, G_i, H_i) = -\gamma_i.
		\]
		By substituting these two partial derivatives into \eqref{eq:EHJB1} and using \eqref{eq:LG} and \eqref{eq:LH}, we obtain
		\begin{align}
			&	\lambda(t)\Big\{V_i(t,x_i,y_{\smn i})+\gamma_i G^2_i(t,x_i,y_{\smn i})-2\gamma_i\left((1-\phi_i/n)x_i-\phi_i y_{\smn i}\right) G_i(t,x_i,y_{\smn i})\\
			&\quad\quad+\gamma_i\left((1-\phi_i/n)x_i-\phi_i y_{\smn i}\right)^2-(\mu_{i1}x_i+\mu_{i2})\big((1-\phi_i/n)x_i-\phi_i y_{\smn i}\big)\Big\}\\
			&=\mathcal{L}_i^{\widehat{\pi}}V_i(t,x_i,y_{\smn i})-\mathcal{L}_i^{\widehat{\pi}}f(t,x_i,y_{\smn i},G_i,H_i)+\left(\mu_{i1}x+\mu_{i2}+2\gamma_i G_i(t,x_i,y_{\smn i})\right)\mathcal{L}_i^{\widehat{\pi}} G_i(t,x_i,y_{\smn i})\\
			&\quad\quad-\gamma_i\mathcal{L}_i^{\widehat{\pi}}H_i(t,x_i,y_{\smn i})\\
			&=\mathcal{L}_i^{\widehat{\pi}}V_i(t,x_i,y_{\smn i})-\mathcal{L}_i^{\widehat{\pi}}f(t,x_i,y_{\smn i},G_i,H_i)+\left(\mu_{i1}x+\mu_{i2}+2\gamma_i G_i(t,x_i,y_{\smn i})\right)\lambda(t)\big(G_i(t,x_i,y_{\smn i})\\
			&\quad\quad-(1-\phi_i/n)x_i+\phi_iy_{\smn i}\big)-\gamma_i \lambda(t)\left[H_i(t,x_i,y_{\smn i})-\left((1-\phi_i/n)x_i-\phi_iy_{\smn i}\right)^2\right]. \label{eq:HJB_extend}
		\end{align}
		Rearranging terms of \eqref{eq:HJB_extend}, it follows that
		\begin{align}\label{eq:EHJB2}
			\lambda(t)V_i(t,x_i,y_{\smn i})=\mathcal{L}^{\widehat{\pi}}_iV_i(t,x_i,y_{\smn i})-\mathcal{L}^{\widehat{\pi}}_if\left(t,x_i,y_{\smn i},G_i,H_i\right)+\lambda(t)f.
		\end{align}
Moreover, by repeating the proof of step (i) for \(V_i(t,x_i,y_{\smn i})\) and \(f(t,x_i,y_{\smn i},G_i,H_i)\), we obtain
\begin{align}
			&V_i(t,x_i,y_{\smn i})=\mathbb{E}_{t,x_i,y_{\smn i}}\left[\int_t^\infty \e^{-\int_t^s \lambda(v)\drm v}\left(\lambda(s)V_i\big(s,X_i^{\widehat{\bm{\pi}}_i}(s),Y^{\widehat{\bpi}_{\smn i}}_{\smn i}(s)\big) -\mathcal{L}_i^{\widehat{\pi}}V_i\big(s,X^{\widehat{\bm{\pi}}_i}(s),Y^{\widehat{\bpi}_{\smn i}}_{\smn i}(s)\big)\right)\drm s\right],
\\
\label{eq:V-1}
\end{align}
		and
		\begin{align}
f&\big(t,x_i,y_{\smn i},G_i(t,x_i,y_{\smn i}),H_i(t,x_i,y_{\smn i})\big)\\
=&\mathbb{E}_{t,x_i,y_{\smn i}}\left[\int_t^\infty \e^{-\int_t^s \lambda(v)\drm v}\left(\lambda(s)f\left(s,X_i^{\widehat{\bm{\pi}}_i}(s),Y^{\widehat{\bpi}_{\smn i}}_{\smn i}(s),G_i\left(s,X_i^{\widehat{\bm{\pi}}_i}(s),Y^{\widehat{\bpi}_{\smn i}}_{\smn i}(s)\right),H_i\left(s,X_i^{\widehat{\bm{\pi}}_i}(s),Y^{\widehat{\bpi}_{\smn i}}_{\smn i}(s)\right)\right)\right.\right.\\
&\quad\quad\quad\quad\quad \left.\left.-\mathcal{L}_i^{\widehat{\pi}}f\left(s,X^{\widehat{\bm{\pi}}_i}(s),Y^{\widehat{\bpi}_{\smn i}}_{\smn i}(s),
G_i\left(s,X_i^{\widehat{\bm{\pi}}_i}(s),Y^{\widehat{\bpi}_{\smn i}}_{\smn i}(s)\right),H_i\left(s,X_i^{\widehat{\bm{\pi}}_i}(s),Y^{\widehat{\bpi}_{\smn i}}_{\smn i}(s)\right)\right)\right)\drm s\right]. \\
\label{eq:f-1}
\end{align}
By combining \eqref{eq:V-1}, \eqref{eq:f-1}, and \eqref{eq:EHJB2} together, we finally obtain
		\begin{align}
			&V_i(t, x_i, y_{\smn i})\\
			&=f\left(t, x_i, y_{\smn i}, G_i(t, x_i, y_{\smn i}), H_i(t, x_i, y_{\smn i})\right)\\
			&=\left(\mu_{i1}x_i+\mu_{i2}\right)G_i(t,x_i,y_{\smn i})-\gamma_i\left(H_i(t,x_i,y_{\smn i})-G^2_i(t,x_i,y_{\smn i})\right)\\
			&=\left(\mu_{i1}x_i+\mu_{i2}\right)\mathbb{E}_{t,x_i,y_{\smn i},t+}\left[\left(1-\phi_i/n\right)X_i^{\widehat{\bm{\pi}}_i}(\tau)-\phi_iY^{\widehat{\bpi}_{\smn i}}_{\smn i}(\tau)\right]\\
			&\quad-\gamma_i\left(\mathbb{E}_{t,x_i,y_{\smn i},t+}\left[\left(\left(1-\phi_i/n\right)X_i^{\widehat{\bm{\pi}}_i}(\tau)-\phi_iY^{\widehat{\bpi}_{\smn i}}_{\smn i}(\tau)\right)^2\right]-\mathbb{E}_{t,x_i,y_{\smn i},t+}^2\left[\left(1-\phi_i/n\right)X_i^{\widehat{\bm{\pi}}_i}(\tau)-\phi_iY^{\widehat{\bpi}_{\smn i}}_{\smn i}(\tau)\right]\right)\\
			&=J_i\big(t, x_i, y_{\smn i}; \widehat{\bm{\pi}}_1, \ldots, \widehat{\bm{\pi}}_i, \ldots, \widehat{\bm{\pi}}_n\big).
		\end{align}
{\bf{Step 4.}}  At last, we demonstrate that \(\widehat{\bm{\pi}}_i\) is indeed the equilibrium feedback strategy. For any \(\epsilon > 0\), define an admissible strategy \(\bpi_i^\epsilon\) as the one in Definition \ref{definitionE}, and we show that the limit in \eqref{eq:limit_equi} holds. For simplicity, define
\begin{align}
Z_i^\epsilon (s)&= (1 - \phi_i/n) X_i^{\bm{\pi}^\epsilon_i}(s) - \phi_i Y^{\widehat{\bpi}_{\smn i}}_{\smn i}(s),\\
\widehat{Z}_i (s) &= (1 - \phi_i/n) X_i^{\widehat{\bm{\pi}}_i}(s) - \phi_i Y^{\widehat{\bpi}_{\smn i}}_{\smn i}(s),\\
z_i &= (1 - \phi_i/n)x_i - \phi_i y_{\smn i}.
\end{align}
Recalling that
		\(
		\mathbb{P}(\tau > t + \epsilon \mid \tau > t) = \e^{-\int_t^{t+\epsilon} \lambda(s)\, ds},
		\)
we have
		\begin{align*}
			G_i&(t,x_i,y_{\smn i})\\
			=&\mathbb{E}_{t,x_i,y_{\smn i}}\left[ \int_t^{\infty} \lambda(s) \e^{-\int_t^s\lambda(v)\drm v} Z_i^\epsilon(s)\drm s\right]\\
			=&\mathbb{E}_{t,x_i,y_{\smn i}}\left[\int_t^{t+\epsilon} \lambda(s)\e^{-\int_t^s\lambda(v)\drm v} Z_i^\epsilon(s)\drm s\right]+\e^{-\int_t^{t+\epsilon}\lambda(v)\drm v}\mathbb{E}_{t,x_i,y_{\smn i}}\left[\int_{t+\epsilon}^{\infty} \lambda(s) \e^{-\int_t^s\lambda(v)\drm v} Z_i^\epsilon(s)\drm s\right]\\
			=&\epsilon \lambda(t)z_i+\big(1-\epsilon \lambda(t)\big)\mathbb{E}_{t,x_i,y_{\smn i}}\left[\mathbb{E}_{t+\epsilon,X_i^{{\bm{\pi}}^\epsilon_i}(t+\epsilon),Y^{\widehat{\bpi}_{\smn i}}_{\smn i}(t+\epsilon),(t+\epsilon)+} \left[\widehat{Z}_i(\tau)\right]\right]+o(\epsilon)\\
			=&\epsilon \lambda(t)z_i+\big(1-\epsilon \lambda(t)\big)\mathbb{E}_{t,x_i,y_{\smn i}}\left[G_i\left(t+\epsilon,X^{\pi_i}_i(t+\epsilon),Y^{\widehat{\bpi}_{\smn i}}_{\smn i}(t+\epsilon)\right)\right]+o(\epsilon),
		\end{align*}
		in which \(X^{\pi_i}_i(t+\epsilon)\) denotes the random value of \(X_i(t+\epsilon)\) if the constant strategy \(\pi_i\) is followed between times \(t\) to \(t+\epsilon\).
		
		By performing a similar discussion for \( H \) and \( G^2 \), we obtain
		\begin{align*}
			H_i(t,x_i,y_{\smn i})
			=&\mathbb{E}_{t,x_i,y_{\smn i},t+}\big[\big(Z_i^\epsilon(\tau)\big)^2\big]\\
			=&\epsilon \lambda(t)z_i^2+\big(1-\epsilon \lambda(t)\big)\mathbb{E}_{t,x_i,y_{\smn i}}\left[\mathbb{E}_{t+\epsilon,X_i^{{\bm{\pi}}^\epsilon_i}(t+\epsilon),Y^{\widehat{\bpi}_{\smn i}}_{\smn i}(t+\epsilon),(t+\epsilon)+} \left[\left(\widehat{Z}_i(\tau)\right)^2\right]\right]+o(\epsilon)\\
			=&\epsilon \lambda(t)z_i^2+\big(1-\epsilon \lambda(t)\big)\mathbb{E}_{t,x_i,y_{\smn i}}\left[H_i\left(t+\epsilon,X^{\pi_i}_i(t+\epsilon),Y^{\widehat{\bpi}_{\smn i}}_{\smn i}(t+\epsilon)\right)\right]+o(\epsilon),
		\end{align*}
		and
		\begin{align*}
			G_i^2(t,x_i,y_{\smn i})
			=&\left\{\mathbb{E}_{t,x_i,y_{\smn i},t+}\left[ Z_i^\epsilon(\tau)\right]\right\}^2\\
			=&\left\{\epsilon \lambda(t)z_i+\big(1-\epsilon\lambda(t)\big)\mathbb{E}_{t,x_i,y_{\smn i}}\left[\mathbb{E}_{t+\epsilon,X_i^{{\bm{\pi}}_i^\epsilon}(t+\epsilon),Y^{\widehat{\bpi}_{\smn i}}_{\smn i}(t+\epsilon),(t+\epsilon)+} \left[\widehat{Z}_i(\tau)\right]\right]\right\}^2+o(\epsilon)\\
			=&\left\{\epsilon \lambda(t)z_i+\big(1-\epsilon\lambda(t)\big)\mathbb{E}_{t,x_i,y_{\smn i}}\left[G_i\left(t+\epsilon,X^{\pi_i}_i(t+\epsilon),Y^{\widehat{\bpi}_{\smn i}}_{\smn i}(t+\epsilon)\right)\right]\right\}^2+o(\epsilon).
		\end{align*}
		
		Then, by substituting the above expressions related to \(G_i\), \(H_i\), and \(G^{2}_i\) into the expression of \(J_i\left(t,x_i,y_{\smn i};\widehat{\bm{\pi}}_1,\ldots,\bm{\pi}_i^\epsilon,\ldots,\widehat{\bm{\pi}}_n\right)\) and simplifying, we get
		\begin{align}\label{eq:JJ}
			J_i&\left(t,x_i,y_{\smn i};\widehat{\bm{\pi}}_1,\ldots,\bm{\pi}_i^\epsilon,\ldots,\widehat{\bm{\pi}}_n\right)\nonumber\\
			=&(\mu_{i1}x_i+\mu_{i2})G_i(t,x_i,y_{\smn i})-\gamma\left\{H_i(t,x_i,y_{\smn i})-G^2_i(t,x_i,y_{\smn i})\right\}\nonumber\\
			\nonumber	=&\epsilon \lambda(t)\left[(\mu_{i1}x_i+\mu_{i2})z_i-\gamma_iz_i^2\right]+\left(1-\epsilon\lambda(t)\right)\Big\{\mathbb{E}_{t,x_i,y_{\smn i}}\big[f\big(t+\epsilon,X_i^{\pi_i}(t+\epsilon),Y^{\widehat{\bpi}_{\smn i}}_{\smn i}(t+\epsilon),G_i(t+\epsilon,\\
			\nonumber	&X_i^{\pi_i}(t+\epsilon),Y^{\widehat{\bpi}_{\smn i}}_{\smn i}(t+\epsilon)),H_i(t+\epsilon,X_i^{\pi_i}(t+\epsilon),Y^{\widehat{\bpi}_{\smn i}}_{\smn i}(t+\epsilon))\big)\big]\Big\}+\left(1-\epsilon\lambda(t)\right)f\Big(t,x_i,y_{\smn i},\mathbb{E}_{t,x_i,y_{\smn i}}\big[G_i\big(t+\epsilon,\\
			\nonumber	&X_i^{\pi_i}(t+\epsilon),Y^{\widehat{\bpi}_{\smn i}}_{\smn i}(t+\epsilon)\big)\big],\mathbb{E}_{t,x_i,y_{\smn i}}[H_i(t+\epsilon,X_i^{\pi_i}(t+\epsilon),Y^{\widehat{\bpi}_{\smn i}}_{\smn i}(t+\epsilon))]\Big)-\epsilon\lambda(t)\mathbb{E}^2_{t,x_i,y_{\smn i}}\big[G_i(t+\epsilon,X_i^{\pi_i}(t+\epsilon),\\
			\nonumber	&Y^{\widehat{\bpi}_{\smn i}}_{\smn i}(t+\epsilon))\big]-\left(1-\epsilon\lambda(t)\right)\mathbb{E}_{t,x_i,y_{\smn i}}\big[f\big(t+\epsilon,X_i^{\pi_i}(t+\epsilon),Y^{\widehat{\bpi}_{\smn i}}_{\smn i}(t+\epsilon),G_i(t+\epsilon,X_i^{\pi_i}(t+\epsilon),Y^{\widehat{\bpi}_{\smn i}}_{\smn i}(t+\epsilon)),\\
			&H_i(t+\epsilon,X_i^{\pi_i}(t+\epsilon),Y^{\widehat{\bpi}_{\smn i}}_{\smn i}(t+\epsilon))\big)\big]+2\epsilon\gamma_iz_i\lambda(t)\mathbb{E}_{t,x_i,y_{\smn i}}\left[G_i(t+\epsilon,X_i^{\pi_i}(t+\epsilon),Y^{\widehat{\bpi}_{\smn i}}_{\smn i}(t+\epsilon))\right]+o(\epsilon).
		\end{align}
		From the identity
		\begin{align*}
			&V_i\left(t+\epsilon,X_i^{\pi_i}(t+\epsilon),Y^{\widehat{\bpi}_{\smn i}}_{\smn i}(t+\epsilon)\right)\\
			&=f\left(t+\epsilon,X_i^{\pi_i}(t+\epsilon),Y^{\widehat{\bpi}_{\smn i}}_{\smn i}(t+\epsilon),G_i\left(t+\epsilon,X_i^{\pi_i}(t+\epsilon),Y^{\widehat{\bpi}_{\smn i}}_{\smn i}(t+\epsilon)\right),H_i\left(t+\epsilon,X_i^{\pi_i}(t+\epsilon),Y^{\widehat{\bpi}_{\smn i}}_{\smn i}(t+\epsilon)\right)\right),
		\end{align*}
		we further deduce
		\begin{align*}
			J_i&(t,x_i,y_{\smn i};\widehat{\bm{\pi}}_1,\ldots,\bm{\pi}_i^\epsilon,\cdot,\widehat{\bm{\pi}}_n)\\
			=&\epsilon \lambda(t)\left[(\mu_{i1}x_i+\mu_{i2})z_i-\gamma_iz_i^2\right]+\left(1-\epsilon\lambda(t)\right)\mathbb{E}_{t,x_i,y_{\smn i}}\left[V_i\left(t+\epsilon,X_i^{\pi_i}(t+\epsilon),Y^{\widehat{\bpi}_{\smn i}}_{\smn i}(t+\epsilon)\right)\right]+\left(1-\epsilon\lambda(t)\right)\\
			&\cdot f\left(t,x_i,y_{\smn i},\mathbb{E}_{t,x_i,y_{\smn i}}\left[G_i\left(t+\epsilon,X_i^{\pi_i}(t+\epsilon),Y^{\widehat{\bpi}_{\smn i}}_{\smn i}(t+\epsilon)\right)\right],\mathbb{E}_{t,x_i,y_{\smn i}}[H_i(t+\epsilon,X_i^{\pi_i}(t+\epsilon),Y^{\widehat{\bpi}_{\smn i}}_{\smn i}(t+\epsilon))]\right)\\
			&-\left(1-\epsilon\lambda(t)\right)f\left(t+\epsilon,X_i^{\pi_i}(t+\epsilon),Y^{\widehat{\bpi}_{\smn i}}_{\smn i}(t+\epsilon),G_i(t+\epsilon,X_i^{\pi_i}(t+\epsilon),Y^{\widehat{\bpi}_{\smn i}}_{\smn i}(t+\epsilon)),H_i(t+\epsilon,X_i^{\pi_i}(t+\epsilon),\right.\\
			&\left.Y^{\widehat{\bpi}_{\smn i}}_{\smn i}(t+\epsilon))\right)-\epsilon\lambda(t)\mathbb{E}^2_{t,x_i,y_{\smn i}}\left[G_i\left(t+\epsilon,X_i^{\pi_i}(t+\epsilon),Y^{\widehat{\bpi}_{\smn i}}_{\smn i}(t+\epsilon)\right)\right]+2\epsilon\gamma_iz_i\lambda(t)\mathbb{E}_{t,x_i,y_{\smn i}}\left[G_i(t+\epsilon,\right.\\
			&\left.X_i^{\pi_i}(t+\epsilon),Y^{\widehat{\bpi}_{\smn i}}_{\smn i}(t+\epsilon))\right]+o(\epsilon).
		\end{align*}
		
		For a sufficiently integrable function \(j\), we have
		\[
		\mathbb{E}_{t,x_i,y_{\smn i}}\left[j(t+\epsilon, X_i^{\pi_i}(t+\epsilon), Y^{\widehat{\bpi}_{\smn i}}_{\smn i}(t+\epsilon))\right] = j(t, x_i, y_{\smn i}) + \epsilon \mathcal{L}_i^{\widehat{\pi}(i)} j(t, x_i, y_{\smn i}) + o(\epsilon),
		\]
		in which \(\widehat{\pi}(i)=(\widehat{\pi}_1,\ldots,\pi_i,\ldots,\widehat{\pi}_n)\).
		
		By applying this with \(j = V_i\), \(G_i\), \(H_i\), and \(f\), we obtain
		\begin{align}
			J_i&\left(t,x_i,y_{\smn i};\widehat{\bm{\pi}}_1,\ldots,\bm{\pi}_i^\epsilon,\ldots,\widehat{\bm{\pi}}_n\right)\\
			=&\epsilon \lambda(t)\left[(\mu_{i1}x_i+\mu_{i2})z_i-\gamma_iz_i^2\right]+\left(1-\epsilon\lambda(t)\right)\left[V_i(t,x_i,y_{\smn i})+\epsilon \mathcal{L}_i^{\widehat{\pi}(i)}V_i(t,x_i,y_{\smn i})\right]\\
			&+\left(1-\epsilon \lambda(t)\right)\left[f\left(t,x_i,y_{\smn i},G_i(t,x_i,y_{\smn i}),H_i(t,x_i,y_{\smn i})\right)+\epsilon f_{G}\mathcal{L}_i^{\widehat{\pi}(i)}G_i(t,x_i,y_{\smn i})+\epsilon f_{H}\mathcal{L}_i^{\widehat{\pi}(i)}H_i(t,x_i,y_{\smn i})\right]\\
			&-\left(1-\epsilon \lambda(t)\right)\left[f\left(t,x_i,y_{\smn i},G_i(t,x_i,y_{\smn i}),H_i(t,x_i,y_{\smn i})\right)+\epsilon f_{G}\mathcal{L}_i^{\widehat{\pi}(i)}f\left(t,x_i,y_{\smn i},G_i(t,x_i,y_{\smn i}),H_i(t,x_i,y_{\smn i})\right)\right]\\
			&-\epsilon\gamma_i\lambda(t)\left[G_i(t,x_i,y_{\smn i})+\epsilon \mathcal{L}_i^{\widehat{\pi}(i)}G_i(t,x_i,y_{\smn i})\right]+2\epsilon\gamma_i z_i\lambda(t)\left[G_i(t,x_i,y_{\smn i})+\epsilon \mathcal{L}_i^{\widehat{\pi}(i)}G_i(t,x_i,y_{\smn i})\right]+o(\epsilon)\\
			=&V_i(t,x_i,y_{\smn i})+\epsilon\left\{\mathcal{L}_i^{\widehat{\pi}(i)}V_i(t,x_i,y_{\smn i})-\mathcal{L}_i^{\widehat{\pi}(i)}f\left(t,x_i,y_{\smn i},G_i(t,x_i,y_{\smn i}),H_i(t,x_i,y_{\smn i})\right)+f_{G}\mathcal{L}_i^{\widehat{\pi}(i)}G_i(t,x_i,y_{\smn i})\right.\\
			&\left.+f_{H}\mathcal{L}_i^{\widehat{\pi}(i)}H_i(t,x_i,y_{\smn i})-\lambda(t)\left[V_i(t,x_i,y_{\smn i})+\gamma_i\left(G_i(t,x_i,y_{\smn i})-z_i\right)^2-(\mu_{i1}x_i+\mu_{i2})z_i\right]\right\}+o(\epsilon).\\
\label{eq:V-final}
		\end{align}
Because \(\pi_i\) is an arbitrary real number, and the \(\epsilon\)-term in equation \eqref{eq:V-final} is non-positive, from which it follows that
\begin{align*}
J_i(t,x_i,y_{\smn i};\widehat{\bm{\pi}}_1,\ldots,\bm{\pi}_i^\epsilon,\ldots,\widehat{\bm{\pi}}_n)\leq V_i(t,x_i,y_{\smn i})+o(\epsilon).
\end{align*}
Hence,
		\begin{align*}
			{\lim\sup}_{\epsilon\to 0} \frac{J_i(t,x_i,y_{\smn i};\widehat{\bm{\pi}}_1,\ldots,\bm{\pi}_i^\epsilon,\ldots,\widehat{\bm{\pi}}_n)-J_i(t,x_i,y_{\smn i};\widehat{\bm{\pi}}_1,\ldots,\widehat{\bm{\pi}}_i,\ldots,\widehat{\bm{\pi}}_n)}{\epsilon}\leq 0,
		\end{align*}
and we complete the proof.
	\end{proof}
\begin{bluepar}
\section{Proof of Theorem \ref{th:main}}\label{app:B}
\setcounter{equation}{0}
	\renewcommand{\theequation}{B.\arabic{equation}}
\begin{proof}
From the above construction, we see that $V_i(t,x_i,y_{\smn i}), G_i(t,x_i,y_{\smn i}), H_i(t,x_i,y_{\smn i})$ satisfy Conditions (1)-(3) in Theorem \ref{th:verification}, we only need to verify that $\widehat{\bm{\pi}}_i$ is an admissible strategy and the transversality condition (4) holds.

 Given $\widehat{\bm{\pi}}_j \in {\bm{\Pi}_j}, j\neq i$, we first show that $\widehat{\bm{\pi}}_i$ is an admissible strategy, that is, $\widehat{\bm{\pi}}_i \in {\bm{\Pi}_i}$.
We see $\widehat{\bm{\pi}}_i$ given in \eqref{eq:pi_i1} is $\mathbb{F}$-progressively measurable, and \eqref{eq:X} has a unique strong solution under the strategy $\widehat{\bm{\pi}}_i$.
In the following, we show that there exist constants $\mathfrak{a}_i, \mathfrak{b}_i$, and $\mathfrak{d}_i$, such that $\E_{t, {x_i}} \big[\left({X}^{\widehat{\bpi}_i}_i(s)\right)^2\big]\le \mathfrak{a}_i \e^{\mathfrak{d}_i (s-t)} + \mathfrak{b}_i$, for any $s\ge t$.
Define $\tau_n = \inf\{s \ge t: |{X}^{\widehat{\bpi}_i}_i(s)| \ge n\}$, and $\lim_{n \to \infty} \tau_n = \infty, a.s.$ We set $s_n= s\wedge \tau_n$.
By applying It\^o's formula to $(X_i^{\widehat{\bm{\pi}}_i}(s_n))^2$, we have
	\begin{align*}
		(X_i^{\widehat{\bm{\pi}}_i}(s_n))^2
		&= x^2_i+ \int_t^{s_n} \left\{ 2 X^{\widehat{\bpi}_i}_i(v)[r X^{\widehat{\bpi}_i}_i(v) + (b_i - r) \widehat{\pi}_i(v)] + \xi_i^2 \widehat{\pi}_i^2(v) + \sigma_i^2 {\widehat{\pi}}_i^2(v) \right\} \drm v \\
		&\quad + 2\int_t^{s_n} X^{\widehat{\bpi}_i}_i(v)\xi_i \widehat{\pi}_i(v) \drm W_i(v) + 2\int_t^{s_n} X^{\widehat{\bpi}_i}_i(v)\sigma_i \widehat{\pi}_i(v) \drm B(v).
	\end{align*}
Then, taking expectations on both sides and noting that the last two stochastic integration terms are martingales, we obtain
 \begin{align}
\EE_{t, x_i}[( X^{\widehat{\bpi}_i}_i(s_n))^2 ] = x^2_i + \EE_{t, x_i} \left( \int_t^{s_n}  \left[2X_i^{\widehat{\bm{\pi}}_i}(v)[r X^{\widehat{\bpi}_i}_i(v) + (b_i - r) \widehat{\pi}_i(v)] + \xi_i^2 \widehat{\pi}_i^2(v) + \sigma_i^2 \widehat{\pi}_i^2(v) \right] \drm v \right).
 \end{align}
 Since the right-hand side is less than
 \[
 x^2_i +2r\EE_{t, x_i}\left[\int_t^{s}(  X^{\widehat{\bpi}_i}_i(v) )^2\drm v\right]+(b_i-r)  \EE_{t, x_i}\left[\int_t^{s} ( X^{\widehat{\bpi}_i}_i(v) )^2 + \widehat{\pi}_i^2(v) \drm v\right]+(\xi_i^2+\sigma_i^2)\EE_{t, x_i}\left[\int_t^{s} \widehat{\pi}_i^2(v)\drm v\right],
 \]
 thus, by the dominated convergence theorem and letting $n \to \infty$, it follows that
\begin{align*}
&\EE_{t, x_i}[( X^{\widehat{\bpi}_i}_i(s))^2 ] \\
&= x^2_i + \EE_{t, x_i} \left( \int_t^{s}  \left[2X_i^{\widehat{\bm{\pi}}_i}(v)[r X^{\widehat{\bpi}_i}_i(v) + (b_i - r) \widehat{\pi}_i(v)] + \xi_i^2 \widehat{\pi}_i^2(v) + \sigma_i^2 \widehat{\pi}_i^2(v) \right] \drm v \right)\\
&\le x^2_i +2r\EE_{t, x_i}\left[\int_t^{s}(  X^{\widehat{\bpi}_i}_i(v) )^2\drm v\right]+(b_i-r)  \EE_{t, x_i}\left[\int_t^{s} ( X^{\widehat{\bpi}_i}_i(v) )^2 + \widehat{\pi}_i^2(v) \drm v\right]+(\xi_i^2+\sigma_i^2)\EE_{t, x_i}\left[\int_t^{s} \widehat{\pi}_i^2(v)\drm v\right]\\
&\le x^2_i + (b_i+r)\int_t^{s} \EE_{t, x_i}[(X^{\widehat{\bpi}_i}_i(v))^2]\drm v+ \big((b_i - r) + \xi_i^2 + \sigma_i^2 \big)\E_{t, x_i}\left[\int_t^{s} \widehat{\pi}_i^2(v)\drm v \right]\\
&\le x^2_i +\mathfrak{P}_i \int_t^{s} \EE_{t, x_i}[( X^{\widehat{\bpi}_i}_i(v))^2]\drm v +\mathfrak{k}_i \left( \mathfrak{A}\e^{\mathfrak{Q}({s}-t)} + \mathfrak{B}({s}-t)+\mathfrak{C}\right),
\end{align*}
where the last inequality is derived from \eqref{eq:up_bound_X}, and $\mathfrak{P}_i$ is given in \eqref{eq:fkp}. Furthermore, by using the Gronwall's inequality, we deduce that
\begin{align}
&\EE_{t,x_i}[( X_i^{\widehat{\bm{\pi}}_i}({s}))^2]  \\
&\le  x^2_i + \mathfrak{k}_i \left( \mathfrak{A}\e^{\mathfrak{Q}({s}-t)} + \mathfrak{B}({s}-t)+\mathfrak{C}\right)+ \mathfrak{P}_i \int^{s}_t \e^{\mathfrak{P}_i({s} - v)} \left[x^2_i + \mathfrak{k}_i \left( \mathfrak{A}\e^{\mathfrak{Q}(v-t)} + \mathfrak{B}(v-t)+\mathfrak{C}\right)\right] \drm v \\
& \le \mathfrak{S}_i \e^{\max\{\mathfrak{Q},\mathfrak{P}_i\}({s}-t)} + \mathfrak{X}_i,
\end{align}
where $\mathfrak{S}_i, \mathfrak{X}_i$ are constants depending on the model parameters.
Thus, under Assumption \ref{assum:1a}, $\widehat{\bm{\pi}}_i$ satisfies the conditions (1)-(3) of Definition \ref{def:admissible}, and is therefore an admissible strategy.
For the transversality condition, we see $V_i, G, H$, and $f$ are all linear combinations of $x^2_i, y^2_{-i}, x_iy_{-i}, x_i, y_{-i}$, thus as to above derivation, under Assumption \ref{assum:1a}, one can show that the transversality condition holds.
\end{proof}
\end{bluepar}
\section{Proof of Corollary \ref{coro:1.8}}\label{app:C}
\setcounter{equation}{0}
	\renewcommand{\theequation}{C.\arabic{equation}}
\begin{proof}
First, from \eqref{eq:bara}, \eqref{eq:a1}, \eqref{eq:c1}, \eqref{eq:e1}, \eqref{eq:p1},  and \eqref{eq:q2}, through straightforward calculations, we derive that, as \(\lambda\to\infty\), for $i=1, 2$,
\begin{align*}
 \tilde{a}_i\to \left(1-\frac{\phi_i}{2}\right)^2, \quad a_i \to 1-\frac{\phi_i}{2}, \quad c_i\to-\phi_i, \quad \alpha_i\to 0,
\quad p_i\to\frac{\mu_{i1}(b_i-r)}{\gamma_i\left(1-\frac{\phi_i}{2}\right)}, \quad q_i\to 0,
\end{align*}
also from \eqref{eq:A}, \eqref{eq:C1}, \eqref{eq:D1}, \eqref{eq:E1}, \eqref{eq:F1}, and \eqref{eq:I1}, we deduce that, as \(\lambda\to\infty\),
\begin{align*}
A_i\to \mu_{i1}\left(1-\frac{\phi_i}{2}\right), \quad C_i\to 0, \quad D_i\to-\mu_{i1}\phi_i, \quad E_i\to \mu_{i2}\left(1-\frac{\phi_i}{2}\right),
\quad F_i\to -\mu_{i2}\phi_i, \quad I_i \to 0.
\end{align*}
Furthermore, using \eqref{eq:Qi}, \eqref{eq:k1},  \eqref{eq:k2}, and \eqref{eq:k3}, we derive that, as $\la \to \infty$,
\begin{align}\label{eq:limit}
k_{i1}\to\frac{2\phi_i\rho_i\sigma_i}{1-\frac{\phi_i}{2}},\quad k_{i2}\to0,\quad k_{i3}\to \frac{\mu_{i2}\rho_i(b_i-r)}{\gamma_i\left(1-\frac{\phi_i}{2}\right)},
\end{align}
and from \eqref{eq:Ni} and the expressions of $\rho_i$ and $\psi_{i,2}$ given in \eqref{eq:rho} and \eqref{eq:Psi}, it follows that
        \begin{align}\label{eq:limN}
        \Xi_i\to\frac{2\phi_i\rho_i\sigma_i^2+2\left(1-\frac{\phi_i}{2}\right)}{2\left(1-\frac{\phi_i}{2}\right)}, \quad
        \rho_i \Xi^{-1}_i \to \dfrac{1-\frac{\phi_i}{2}}{2  \psi_{i,2}}.
        \end{align}

Next, by substituting \eqref{eq:limit} and \eqref{eq:limN} into \eqref{eq:hpi1} and \eqref{eq:hpi2}, we obtain the expression for
\(\widehat{\pi}_i\), as \(\lambda\to\infty\),
\begin{align}\label{eq:limpi2}
\widehat{\pi}_i(x_1, x_2) \to \frac{\mu_{i1}(b_i-r)}{2\gamma_i\psi_{i,2} }x_i+\frac{\phi_i\sigma_i}{\psi_{i,2}} \cdot
\lim_{\la \to \infty} \overline{\sigma \widehat{\pi}}^{(2)} + \frac{\mu_{i2}(b_i-r)}{2\gamma_i\psi_{i,2}}.
 \end{align}
To determine the limiting expression of \(\widehat{\pi}_i\), we need to derive the limit of \(\overline{\sigma\widehat{\pi}}^{(2)}\) as $\la \to \infty$. By analogy with the above computation, from \eqref{eq:coeff1}, \eqref{eq:coeff2}, and \eqref{eq:coeff3}, we deduce that
 \begin{align}\label{eq:sigpi}
 \overline{\sigma \rho px}^{(2)} \to \Upsilon_2,  \quad \overline{\sigma\rho qy}^{(2)} \to 0,
 \quad \overline{\sigma k_1}^{(2)} \to \Psi_2,   \quad \overline{\sigma k_2}^{(2)} \to 0,
  \quad \overline{\sigma k_3}^{(2)} \to \Phi_2,
 \end{align}
in which \(\Psi_2\), \(\Phi_2\), and \(\Upsilon_2\) are given in \eqref{eq:Psi}, \eqref{eq:Phii0}, and \eqref{eq:Upsilon}.
Then, by substituting \eqref{eq:sigpi} into \eqref{eq:system1}, we derive
	\begin{align}\label{eq:rela2}
		\left(1-\Psi_2\right) \lim_{\la \to \infty} \overline{\sigma \widehat{\pi}}^{(2)}=\Upsilon_2+\Phi_2.
	\end{align}
Thus, the value of $\lim_{\la \to \infty} \overline{\sigma \widehat{\pi}}^{(2)}$ depends on whether \(\Psi_2 < 1\) or \(\Psi_2=1\), and can be categorized into the following three cases:
\begin{itemize}
\item[$(1)$] If \(\Psi_2 < 1\), then from \eqref{eq:rela2}, we derive that
 \[\overline{\sigma\widehat{\pi}}^{(2)} \to \frac{\Upsilon_2+\Phi_2}{1 -\Psi_2}.\]
 Thus, the limiting equilibrium feedback strategy given in \eqref{eq:limpi2} is well-defined. By further substituting \(\Psi_2\), \(\Phi_2\), and \(\Upsilon_2\) into \eqref{eq:limpi2}, and rearranging terms, we obtain the expressions of $\widehat{\pi}_i, i=1,2$ given in \eqref{eq:pi1.8} and \eqref{eq:pi2.8}. 	
\item[$(2)$]  If \(\Psi_2=1\) and \(\Upsilon_2+\Phi_2\neq0\), then \eqref{eq:rela2} has no solution. Thus, the limiting equilibrium feedback strategy does not exist.
\item[$(3)$] The remaining case is \(\Psi_2 = 1\) and \(\Upsilon_2+\Phi_2=0\). In this case, \eqref{eq:rela2} has infinitely many solutions. Then, the limiting equilibrium feedback strategy is given in \eqref{eq:picor1.80}, where \(\overline{\sigma\widehat{\pi}}^{(2)}\) is an arbitrary real number.
 \end{itemize}
\end{proof}

\section{Proof of Theorem \ref{th:homo_mefield}}\label{app:homo_mf}
\setcounter{equation}{0}
\renewcommand{\theequation}{D.\arabic{equation}}
	\begin{proof}
We postulate that for any fixed realization \(\omega \in \Omega\), the average processes \(\overline{\sigma\widehat{\pi}}\) and \(\overline{\underline{br}\widehat{\pi}}\) take the following affine form:
	\begin{align}\label{eq:average_pi}
		\overline{\sigma\hat{\pi}}  = \mathfrak{H}_1\bar{x}  + \mathfrak{H}_0,
		\qquad
		\overline{\underline{br}\hat{\pi}} = \mathfrak{T}_1\bar{x}  + \mathfrak{T}_0,
	\end{align}
	where  $\mathfrak{H}_i$ and \(\mathfrak{T}_i\) (for \(i=0,1\)) are constants and need to be determined.
Then, by substituting \eqref{eq:average_pi} into \eqref{eq:M-pi_i}, we can rewrite the expression of \(\widehat{\pi}(x,\bar{x})\) as follows
\begin{align}
\widehat{\pi}(x, \bar{x})&= \rho_{\scriptscriptstyle m} p_{\scriptscriptstyle m} x + \rho_{\scriptscriptstyle m} q_{\scriptscriptstyle m} \bar{x} + k_{1,\scriptscriptstyle m} \left(\mathfrak{H}_1\,\bar{x} +\mathfrak{H}_0\right)+ k_{2,\scriptscriptstyle m}\left(\mathfrak{T}_1\,\bar{x} + \mathfrak{T}_0\right) +k_{3,\scriptscriptstyle m}\\
		&=\rho_{\scriptscriptstyle m}p_{\scriptscriptstyle m} x+\left(\rho_{\scriptscriptstyle m} q_{\scriptscriptstyle m} +k_{1,\scriptscriptstyle m}\mathfrak{H}_1+k_{2,\scriptscriptstyle m}\mathfrak{T}_1\right)\bar{x}+k_{1,\scriptscriptstyle m}\mathfrak{H}_0+k_{2,\scriptscriptstyle m}\mathfrak{T}_0+k_{3,\scriptscriptstyle m}. \label{eq:M-pi}
	\end{align}
By multiplying $\sigma$ or $(b-r)$ on both sides of \eqref{eq:M-pi}, and taking the conditional expectation, we obtain
\begin{align*}
\overline{\sigma\widehat{\pi}}&=\sigma\left(\rho_{\scriptscriptstyle m}p_{\scriptscriptstyle m}+\rho_{\scriptscriptstyle m} q_{\scriptscriptstyle m} +k_{1,\scriptscriptstyle m}\mathfrak{H}_1+k_{2,\scriptscriptstyle m}\mathfrak{T}_1\right)\bar{x}+\sigma \left(k_{1,\scriptscriptstyle m}\mathfrak{H}_0+k_{2,\scriptscriptstyle m}\mathfrak{T}_0+k_{3,\scriptscriptstyle m}\right),\\
\overline{\underline{br}\widehat{\pi}}&=(b-r)\left(\rho_{\scriptscriptstyle m}p_{\scriptscriptstyle m}+\rho_{\scriptscriptstyle m} q_{\scriptscriptstyle m} +k_{1,\scriptscriptstyle m}\mathfrak{H}_1+k_{2,\scriptscriptstyle m}\mathfrak{T}_1\right)\bar{x}+(b-r) \left(k_{1,\scriptscriptstyle m}\mathfrak{H}_0+k_{2,\scriptscriptstyle m}\mathfrak{T}_0+k_{3,\scriptscriptstyle m}\right).
\end{align*}
By comparing them with \eqref{eq:average_pi}, we deduce the following system of equations
\begin{align}
\begin{cases}
\mathfrak{H}_1&=\sigma\left(\rho_{\scriptscriptstyle m}p_{\scriptscriptstyle m}+\rho_{\scriptscriptstyle m} q_{\scriptscriptstyle m} +k_{1,\scriptscriptstyle m}\mathfrak{H}_1+k_{2,\scriptscriptstyle m}\mathfrak{T}_1\right),\\
\mathfrak{H}_0&=\sigma \left(k_{1,\scriptscriptstyle m}\mathfrak{H}_0+k_{2,\scriptscriptstyle m}\mathfrak{T}_0+k_{3,\scriptscriptstyle m}\right),\\
\mathfrak{T}_1&=(b-r)\left(\rho_{\scriptscriptstyle m}p_{\scriptscriptstyle m}+\rho_{\scriptscriptstyle m} q_{\scriptscriptstyle m} +k_{1,m}\mathfrak{H}_1+k_{2,m}\mathfrak{T}_1\right),\\
\mathfrak{T}_0&=(b-r) \left(k_{1,\scriptscriptstyle m}\mathfrak{H}_0+k_{2,\scriptscriptstyle m}\mathfrak{T}_0+k_{3,m}\right).
\end{cases}
\end{align}
Given that $1-\delta_{\scriptscriptstyle m}\neq 0$, we solve the above system of equations and obtain
	\begin{equation}
		\begin{aligned}
			\mathfrak{H}_1&=\dfrac{\rho_{\scriptscriptstyle m} \sigma (p_{\scriptscriptstyle m}+q_{\scriptscriptstyle m})}{1 - \delta_m},\qquad
			\mathfrak{T}_1=\dfrac{\rho_{\scriptscriptstyle m}(b-r)(p_{\scriptscriptstyle m}+q_{\scriptscriptstyle m})}{1-\delta_m (b-r)},\\
			\mathfrak{H}_0&= \dfrac{\sigma k_{3,\scriptscriptstyle m}}{1-\delta_m}, \qquad \qquad \qquad
			\mathfrak{T}_0=\dfrac{(b-r) k_{3,\scriptscriptstyle m}}{1-\delta_m}.
		\end{aligned}
	\end{equation}
	By substituting these coefficients into \eqref{eq:M-pi} and rearranging terms, we derive the explicit expressions of $\widehat{\pi}$ given in \eqref{eq:M-Hpifinal}.
To show the self-consistency of the obtained ansatz. Define
	\begin{align*}
		\mathfrak{G}_{\scriptscriptstyle m}&:=\frac{\rho_{\scriptscriptstyle m}q_{\scriptscriptstyle m}+\delta_{\scriptscriptstyle m}\rho_{\scriptscriptstyle m} p_{\scriptscriptstyle m}}{1-\delta_{\scriptscriptstyle m}},\\
		\mathfrak{R}_{\scriptscriptstyle m}&:=\frac{k_{3,m}}{1-\delta_{\scriptscriptstyle m}}.
	\end{align*}
	Multiplying \eqref{eq:M-Hpifinal} by $\sigma$ or $(b-r)$ and taking the conditional expectation on both sides of the equation, it yields that
	\begin{align*}
		&\sigma \rho_{\scriptscriptstyle m} p_{\scriptscriptstyle m}+  \sigma\mathfrak{G}_{\scriptscriptstyle m}=\mathfrak{H}_1,\qquad \qquad \qquad\sigma \mathfrak{R}_{\scriptscriptstyle m}=\mathfrak{H}_0,\\
&(b-r) \rho_{\scriptscriptstyle m} p_{\scriptscriptstyle m}+  (b-r)\mathfrak{G}_{\scriptscriptstyle m}=\mathfrak{T}_1,\quad (b-r) \mathfrak{R}_{\scriptscriptstyle m}=\mathfrak{T}_0.
	\end{align*}
Hence, we complete our proof.
\end{proof}

 \begin{bluepar}
\section{Proof of Proposition \ref{th:hre}}\label{app:hre}
\setcounter{equation}{0}
\renewcommand{\theequation}{D.\arabic{equation}}

	\begin{proof}
	Since the game is homogeneous, all agents share the same type distribution. Therefore, we can treat any arbitrary agent $i$ (for $i=1,\dots,n$) as a representative agent, which allows us to set $\eta_i=\eta$ with $\eta$ denoting the generic type vector of the representative agent.  Recall the coefficients \(\rho\), \(p\), \(q\), \(Q\), \(k_{j}\) (for \(j=1,2,3\)) given in \eqref{eq:rho}, \eqref{eq:p1}, \eqref{eq:q2}, \eqref{eq:Qi}, \eqref{eq:k1}, \eqref{eq:k2}, \eqref{eq:k3}, and the corresponding MFG coefficients \(\rho_{\scriptscriptstyle m}\), \(p_{\scriptscriptstyle m}\), \(q_{\scriptscriptstyle m}\), \(Q_{\scriptscriptstyle m}\), \(k_{j,\scriptscriptstyle m}\) given in \eqref{eq:MFrho},\eqref{eq:MF-p1},\eqref{eq:MFq2}, \eqref{eq:MFQ}, \eqref{eq:MFk1}, \eqref{eq:MFk2}, \eqref{eq:MFk3}, we obtain \(\rho = \rho_{\scriptscriptstyle m} \),  and as $n\to\infty$, from direct elementary algebraic calculations, we get
	\begin{align*}
	p&\to p_{\scriptscriptstyle m}  , \qquad
	 q\to q_{\scriptscriptstyle m} ,\\
	 Q&\to Q_{\scriptscriptstyle m},  \qquad
	k_j \to k_{j,m},
	\end{align*}
	where \(j=1,2,3\).

For any arbitrary $t \ge 0$, we write $x_i := X_i(t)$ to denote the $i$-th agent's wealth at time $t$, which is different from the initial wealth $x_{i,0}$ recorded in $\eta$. Since all agents share the same parameters and are driven by mutually independent idiosyncratic Brownian motions $\{W_i\}_{i=1}^n$, thus, $\{x_i\}_{i=1}^n$ are conditionally i.i.d.\ given $\mathcal{F}_t^B$, which is the filtration generated by the common noise $\{B\}$. Hence, by the conditional Law of Large Numbers, $y_{ \smn i} = \frac{1}{n}\sum_{j=1,j \neq i}^n x_j \to \bar{x}$ almost surely as $n\to\infty$, where $\bar{x}= \E\left[x\mid\mathcal{F}_t^B\right]$ is the mean-field state of the representative agent.
	
Moreover, since $\delta\to\delta_{\scriptscriptstyle m}$ and all the coefficients are bounded, as $n \to \infty$, it follows that
	\begin{align*}
		&	\mathfrak{G} = \rho p + \frac{(n-1)\delta\rho (q + \delta p)}{(\delta+(1-\delta)n)(\delta+n)}\rightarrow \rho_{\scriptscriptstyle m} p_{\scriptscriptstyle m}, \\
		& \mathfrak{D}=\frac{n^2 \rho (q + \delta p)}{(\delta+(1-\delta)n)(\delta+n)}\rightarrow \frac{\rho_{\scriptscriptstyle m} q_{\scriptscriptstyle m}+\rho_{\scriptscriptstyle m} \delta_{\scriptscriptstyle m} p_{\scriptscriptstyle m}}{1-\delta_{\scriptscriptstyle m}}, \\
		&\mathfrak{R} = \frac{nk_3}{\delta + (1 - \delta)n}\rightarrow \frac{k_{3,m}}{1-\delta_{\scriptscriptstyle m}}.
	\end{align*}
Since in the homogeneous $n$-agent game, the equilibrium investment strategy of an agent with type $\eta_i$ is given by:
	\begin{align}
		\widehat{\pi}_i(x_i, y_{-i}) & = \mathfrak{G} x_i+\mathfrak{D} y_{\smn i}+\mathfrak{R},
	\end{align}
thus, letting $n\to\infty$, we derive that
	\begin{align*}
\widehat{\pi}_i(x_i, y_{-i}) \to \rho_{\scriptscriptstyle m}p_{\scriptscriptstyle m} x+\frac{\rho_{\scriptscriptstyle m} q_{\scriptscriptstyle m}+\rho_{\scriptscriptstyle m} \delta_{\scriptscriptstyle m} p_{\scriptscriptstyle m}}{1-\delta_{\scriptscriptstyle m}}\bar{x}+\frac{k_{3,m}}{1-\delta_{\scriptscriptstyle m}},\quad \text{a.s.}
	\end{align*}
in which the right side is exactly the mean-field equilibrium strategy $\widehat{\pi}_{\scriptscriptstyle m}$ given in \eqref{eq:M-Hpifinal}.
\end{proof}
\end{bluepar}
\begin{bluepar}
\section{The table of the notations}\label{App:G}
\setcounter{equation}{0}
\renewcommand{\theequation}{G.\arabic{equation}}
In this section, we will provide a list of notations we used in our paper.
\begin{table}[htbp]
	\centering
	\caption{Notation and Definitions}
	\begin{tabular}{l l}
		\toprule
		\textbf{Notation} & \textbf{Definition} \\
		\midrule
		$r$ & Constant risk-free interest rate \\
		$b_i$ & Expected return of stock $i$ \\
		$\xi_i$ & Idiosyncratic volatility of stock $i$ \\
		$\sigma_i$ & Common volatility of stock $i$ \\
		$\pi_i$ & Amount invested in stock $i$ \\
		$X_i^{\bm{\pi}_i}$ & Wealth process of agent $i$ \\
		$\bm{\pi}$ & Joint strategy of $n$ agents \\
		$\bm{X}^{\bm{\pi}}$ & Wealth processes of $n$ agents \\
		$\phi_i$ & Competition parameter of agent $i$ \\
		$\gamma_i$ & Risk aversion coefficient of agent $i$ \\
		$\mu_{i1}, \mu_{i2}$ & Positive constants in the affine weight of agent $i$ \\
		$x_i$ & Initial wealth of agent $i$ \\
		$y_{\smn i}$ & Average wealth excluding agent $i$ \\
		$Y_{\smn i}^{\bm{\pi}_{\smn i}}$ & Average wealth process excluding agent $i$ \\
		$\widehat{\underline{br}\pi}$ & $\frac{1}{n}\sum_{j\neq i}(b_j-r)\pi_j$ \\
		$\widehat{\sigma\pi}$ & $\frac{1}{n}\sum_{j\neq i}\sigma_j\pi_j$ \\
		$\widehat{\xi\pi}^2$ & $\sum_{j\neq i}\left(\frac{1}{n}\xi_j\pi_j\right)^2$ \\
		$\widehat{\sigma\pi}^2$ & $\sum_{j\neq i}\left(\frac{1}{n}\sigma_j\pi_j\right)^2$ \\
		$\rho_i$ & $1/[2(\xi_i^2+\sigma_i^2)]$ \\
		$\varphi_n$ & $1-\phi_i/n$ \\
		$\iota_i$ & $\frac{1}{2}\frac{(b_i-r)^2}{\xi_i^2+\sigma_i^2}$ \\
		$\psi_{i,n}$ & $(1-\phi_i/n)\xi_i^2+\sigma_i^2$ \\
		$p_i$ & Coefficient of $x_i$ in $\widehat{\pi}_i$, defined in \eqref{eq:p1} \\
		$q_i$ & Coefficient of $y_{\smn i}$ in $\widehat{\pi}_i$, defined in \eqref{eq:q2} \\
		$Q_i$ & Defined in \eqref{eq:Qi} \\
		$k_{i1}$ & Coefficient of $\widehat{\sigma\widehat{\pi}}$ in $\widehat{\pi}_i$, defined in \eqref{eq:k1} \\
		$k_{i2}$ & Coefficient of $\widehat{\underline{br}\widehat{\pi}}$ in $\widehat{\pi}_i$, defined in \eqref{eq:k2} \\
		$k_{i3}$ & Constant term in $\widehat{\pi}_i$, defined in \eqref{eq:k3} \\
		$\Xi_i$ & $1+\frac{k_{i1}\sigma_i+k_{i2}(b_i-r)}{2}$. \\
		$\delta$ & $k_1\sigma+k_2(b-r)$, defined in the homogeneous $n$-agent game \\
		$\mathfrak{G}$ & Coefficient of $x_i$ in $\widehat{\pi}$, defined in \eqref{eq:fG} \\
		$\mathfrak{D}$ & Coefficient of $y_{\smn i}$ in $\widehat{\pi}$, defined in \eqref{eq:fD} \\
		$\mathfrak{R}$ & Constant term in $\widehat{\pi}$, defined in \eqref{eq:fR} \\
		$\mathfrak{k}_i$ & $(b_i-r)+\xi_i^2+\sigma_i^2$ \\
		$\mathfrak{P}_i$ & $b_i+r+2\rho_i^2 p_i^2\mathfrak{k}_i$ \\
		$\Theta_n$ & $\sum_{i=1}^n\frac{\sigma_i b_i}{n\gamma_i\psi_{i,n}}$ \\
		$\Psi_n$ & $\sum_{i=1}^n\frac{\phi_i\sigma_i^2}{n\psi_{i,n}}$ \\
		$\Phi_n$ & $\sum_{i=1}^n\frac{\mu_{i2}\sigma_i(b_i-r)}{2n\gamma_i\psi_{i,n}}$ \\
		$\overline{\sigma o}^{(2)}$ & $\frac{1}{2}\sum_{j=1}^{2}\sigma_j o_j$, for $o\in\{\widehat{\pi},\rho p x, \rho q y, k_{j1}, k_{j2}, k_{j3}\}$ \\
		$\overline{\underline{br}\,o}^{(2)}$ & $\frac{1}{2}\sum_{j=1}^{2}(b_j-r)o_j$, for $o$ as above \\
		\bottomrule
	\end{tabular}
	\label{tab:notation}
\end{table}
\begin{table}[htbp]
	\centering
	\caption{Summary of notations used in Section 3}
	\label{tab:notation2}
	\renewcommand{\arraystretch}{1.3}
	\begin{tabular}{l l}
		\toprule
		\textbf{Notation} & \textbf{Definition} \\
		\midrule
		$r$ & The common interest rate \\
		$x_{i,0}$ & Initial wealth of agent $i$ \\
		$b_i$ & Expected return of stock \\
		$\xi_i$ & Idiosyncratic volatility \\
		$\sigma_i$ & Common volatility \\
		$\pi$ & Amount invested in stock \\
		$\eta_i$ & Type vector $(x_{i,0}, b_i, \dots, \mu_{2i})$ \\
		$m_n$ & The empirical measure \\
		$m$ & The limit measure of $m_n$ \\
		$\eta$ & Limiting type vector of $\eta_i$ \\
		$X^{\bm{\pi}}$ & Wealth process (rep. agent) \\
		$\phi$ & Competition parameter ($\in[0,1]$) \\
		$\gamma$ & Risk aversion ($>0$) \\
		$\bar{X}^{\bm{\pi}}$ & Average wealth process \\
		$x$ & Wealth at time $t$ \\
		$\bar{x}$ & Average wealth at time $t$, i.e.,\ $\bar{x}=\mathbb{E}[x\mid \mathcal{F}_t^B]$ \\
		$\overline{\underline{br}\pi}$ & $\mathbb{E}[(b-r)\pi(\cdot)\mid \mathcal{F}_t^B]$ \\
		$\overline{\sigma\pi}$ & $\mathbb{E}[\sigma\pi(\cdot)\mid \mathcal{F}_t^B]$ \\
		$\widehat{\pi}$ & The MFE strategy \\
		$\overline{\sigma\widehat{\pi}}$ & $\mathbb{E}[\sigma\widehat{\pi}\mid \mathcal{F}_t^B]$ \\
		$\overline{\underline{br}\widehat{\pi}}$ & $\mathbb{E}[(b-r)\widehat{\pi}\mid \mathcal{F}_t^B]$ \\
		$\rho_{\scriptscriptstyle m}$ & $1/[2(\xi^2+\sigma^2)]$ \\
		$p_{\scriptscriptstyle m}$ & Coefficient of $x$ in $\widehat{\pi}$, defined in \eqref{eq:MF-p1} \\
		$q_{\scriptscriptstyle m}$ & Coefficient of $\bar{x}$ in $\widehat{\pi}$, defined in \eqref{eq:MFq2} \\
		$Q_{\scriptscriptstyle m}$ & Defined in \eqref{eq:MFQ} \\
		$k_{1,\scriptscriptstyle m}$ & Coefficient of $\overline{\sigma\widehat{\pi}}$ in $\widehat{\pi}$, defined in \eqref{eq:MFk1} \\
		$k_{2,\scriptscriptstyle m}$ & Coefficient of $\overline{\underline{br}\widehat{\pi}}$ in $\widehat{\pi}$, defined in \eqref{eq:MFk2} \\
		$k_{3,\scriptscriptstyle m}$ & Constant term in $\widehat{\pi}$, defined in \eqref{eq:MFk3} \\
		$\delta_{\scriptscriptstyle m}$ & $\sigma k_{1,\scriptscriptstyle m}+(b-r)k_{2,\scriptscriptstyle m}$ \\
		$\Psi$ & $\mathbb{E}\left[\frac{\phi\sigma^2}{\xi^2+\sigma^2}\right]$ \\
		$\Theta$ & $\mathbb{E}\left[\frac{\sigma b}{\gamma(\xi^2+\sigma^2)}\right]$ \\
		\bottomrule
	\end{tabular}
\end{table}

\end{bluepar}

\end{document}